\setlist[itemize]{leftmargin=2em}
\setlist[enumerate]{leftmargin=2em}
\definecolor{darkblue}{rgb}{0.0,0,0.7} 
\definecolor{darkred}{rgb}{0.7,0,0} 
\definecolor{darkgreen}{rgb}{0, .6, 0} 
\newcommand{\defncolor}{\color{darkred}}
\newcommand{\defn}[1]{{\defncolor\emph{#1}}} 
\newtheorem{theorem}{Theorem}[section]
\newtheorem{prop}[theorem]{Proposition}
\newtheorem{cor}[theorem]{Corollary}
\newtheorem{lemma}[theorem]{Lemma}
\theoremstyle{definition}
\newtheorem{example}[theorem]{Example}
\newtheorem{remark}[theorem]{Remark}
\numberwithin{equation}{section}
\newcommand{\idiot}[1]{\vspace{5 mm}\par \noindent
\marginpar{\textsc{Note}}
\framebox{\begin{minipage}[c]{0.95 \textwidth}
#1 \end{minipage}}\vspace{5 mm}\par}
\renewcommand{\idiot}[1]{}
\def\la{{\lambda}}
\def\ga{{\gamma}}
\def\CC{{\mathbb C}}
\def\PP{{\mathrm{Par}}}
\def\AA{{\mathcal A}}
\def\bP{{\mathcal P}}
\def\dcl{{\{\!\!\{}}
\def\dcr{{\}\!\!\}}}
\def\o{\overline}
\def\b{\bar}
\def\P{{\rm{P}}}
\newcommand{\End}{\operatorname{End}}
\newcommand{\Hom}{\operatorname{Hom}}
\def\MP{{\rm M\!P\!}}
\def\bc{{\bf c}}
\newdimen\squaresize \squaresize=10pt
\newdimen\thickness \thickness=0.4pt
\def\square#1{\hbox{\vrule width \thickness
     \vbox to \squaresize{\hrule height \thickness\vss
        \hbox to \squaresize{\hss#1\hss}
     \vss\hrule height\thickness}
\unskip\vrule width \thickness}
\kern-\thickness}
\def\vsquare#1{\vbox{\square{$#1$}}\kern-\thickness}
\def\thisbox#1{\kern-.09ex\fbox{#1}}
\def\downbox#1{\lower1.200em\hbox{#1}}
\newdimen\Squaresize \Squaresize=20pt
\newdimen\Thickness \Thickness=0.4pt
\def\Square#1{\hbox{\vrule width \Thickness
     \vbox to \Squaresize{\hrule height \Thickness\vss
        \hbox to \Squaresize{\hss#1\hss}
     \vss\hrule height\Thickness}
\unskip\vrule width \Thickness}
\kern-\Thickness}
\def\Vsquare#1{\vbox{\Square{$#1$}}\kern-\Thickness}
\title[A Multiset Partition Algebra]{Howe duality of the symmetric group and a multiset partition algebra}
\author[Rosa Orellana]{Rosa Orellana}%
\address{Dartmouth College, Mathematics Department, Hanover, NH 03755, USA} \email{rosa.c.orellana@dartmouth.edu}%
\author[Mike Zabrocki]{Mike Zabrocki}%
\address{Department of Mathematics and Statistics, York University, Toronto, Ontario M3J 1P3,
Canada} \email{zabrocki@mathstat.yorku.ca}%
\date{}  
\thanks{Work supported by NSF grants DMS-1300512 and DMS-1700058, and by NSERC}
\begin{document}
\begin{abstract}
We introduce the \emph{multiset partition algebra}, $\MP_{r,k}(x)$, that has bases elements indexed by multiset partitions, where $x$ is an indeterminate and $r$ and $k$ are non-negative integers.  This algebra can be realized as a diagram algebra that generalizes the partition algebra.  When $x$ is an integer greater or equal to $2r$, we show that $\MP_{r,k}(x)$ is isomorphic to a centralizer algebra of the symmetric group, $S_n$, acting on the polynomial ring on the variables $x_{ij}$, $1\leq i \leq n$ and $1\leq j\leq k$.  We describe the representations of $\MP_{r,k}(x)$, branching rule and restriction of its representations in the case that $x$ is an integer greater or equal to $2r$.

\end{abstract}

\maketitle
\tableofcontents

\section{Introduction}

Let $V_n$ be an $n$ dimensional vector space,
then Schur-Weyl duality is a fundamental property in representation theory
that relates the representations of the
general linear group $GL_n(\CC)$ and the symmetric group algebra
$\CC S_k$ as they both act on the tensor space
$$V_n^{\otimes k} = \underbrace{V_n \otimes V_n \otimes \cdots \otimes V_n}_{k\hbox{ times }}~.$$
The duality of these actions implies that
\begin{equation}\label{eq:decomp1}
V_n^{\otimes k} \cong \bigoplus_{\lambda} W_{GL_n(\CC)}^\lambda \otimes W_{\CC S_k}^\lambda
\end{equation}
where for a group or an algebra $A$, we use the notation $W^\lambda_A$ to represent an irreducible
representation of $A$ and the direct sum is over all partitions $\lambda$ of $k$.

This duality applies to algebras acting on spaces other than $V_n^{\otimes k}$
which centralize each other.
For instance, if we let $V_{n,k} = (\mathbb{C}^n)^k$ denote the $nk$ dimensional vector space of
sequences $(v_1, v_2, \ldots, v_k)$ where $v_i \in \mathbb{C}^n$.
There is an action of $GL_n(\CC)$ and of $GL_k(\CC)$ on $V_{n,k}$ which mutually commute.
Let $\bP(V)$ be the algebra of polynomial functions on $V$ and
$$\bP^r(V) = \{ f \in \bP(V) : f(zv) = z^r f(v)\hbox{ for }z \in \CC^{\times}\}$$
be the homogeneous polynomials of degree $r$.
The $GL_n(\CC) \times GL_k(\CC)$ action extends to $\bP^r(V_{n,k})$ \cite[Section 5.6.2]{GoodWall2}.
Then the analogous decomposition of $\bP^r(V_{n,k})$ into $GL_n(\CC) \times GL_k(\CC)$-modules is
\begin{equation}\label{eq:decomp2}
\bP^r(V_{n,k}) \cong \bigoplus_\lambda W_{GL_n(\CC)}^\lambda \otimes W_{GL_k(\CC)}^\lambda
\end{equation}
where the sum is over all partitions $\lambda$ of $r$ of length at most $min(n,k)$.
It is this duality that  is referred to as `Howe duality' \cite{Howe}.

The decompositions in the isomorphisms in Equations \eqref{eq:decomp1} and \eqref{eq:decomp2}
create a correspondence between representations of one algebra (or group) and the
representations of the dual algebra (or group).  In addition, the multiplicities of irreducible
representations of one algebra correspond to dimensions of irreducible representations
of the dual algebra.

In the 1990's, Martin \cite{Ma1} introduced the partition algebra, $\P_k(n)$,
as a generalization of the Temperley-Lieb algebra that could be used to study the transfer matrix
of certain statistical mechanics models.
In the case that $n\geq2k$ the partition algebra
is the algebra whose action centralizes the action of $S_n \subseteq GL_n(\CC)$ as the subgroup of permutation matrices
acting on $V_n^{\otimes k}$ \cite{Ma4, MR, Jones}.

In this case we also have a decomposition of $V^{\otimes k}$ into
$\CC S_n \times \P_k(n)$  irreducible representations,
\begin{equation}\label{eq:decomp3}
V_n^{\otimes k} \cong \bigoplus_{\lambda} W_{\CC S_n}^\lambda \otimes W_{\P_k(n)}^\lambda~.
\end{equation}
One application of this decomposition is that it helps to establish the relationship
between multiplicities occurring in the decomposition of the tensor product of $S_n$
irreducible representations (i.e. Kronecker coefficients) to the multiplicities occurring in the
restriction of $P_k(n)$ irreducibles  to Young subalgebras \cite{BDO}.
The multiplicities for tensor products of polynomial representations of $GL_n$ are known as
the Littlewood-Richardson coefficients and they are better understood than
the Kronecker coefficients.
One goal of the research in this area has been to develop
an understanding of algebras, combinatorics, and the representation theory
associated to $\P_k(n)$ to
help understand the representation theory of the
symmetric group in general
\cite{COSSZ,Eny, Ma2, Ma4, ME, Hal, HJ, HL, HR, BH1, BH2, BHH}.

In this paper we introduce a multiset partition algebra $\MP_{r,k}(x)$ such that,
for $n \geq 2r$, $\MP_{r,k}(n)$ is the centralizer algebra
of the symmetric group $S_n \subseteq GL_n(\CC)$ when it acts on $\bP^r(V_{n,k})$.
That is, if we denote the irreducible representations of this algebra by
$W_{\MP_{r,k}(n)}^\lambda$, then as a $\CC S_n \times \MP_{r,k}(n)$ module,
\begin{equation}\label{eq:decomp4}
\bP^r(V_{n,k}) \cong \bigoplus_\lambda W_{\CC S_n}^\lambda \otimes W_{\MP_{r,k}(n)}^\lambda~.
\end{equation}

We begin by introducing the generic multiset partition algebra $\MP_{r,k}(x)$ that depends
on a parameter $x$ and
uses multiset partitions to index basis elements.
We show that when $x = n$, an integer greater than or equal to $2r$,
$\MP_{r,k}(n)$ is isomorphic to $\AA_{r,k}(n):=\End_{S_n}(\bP^r(V_{n,k}))$,
the centralizer algebra
of the action of the symmetric group $S_n$ as the subgroup of permutation
matrices inside of $GL_n(\CC)$ when it acts on $\bP^r(V_{n,k})$.

Our hope is that this algebra can provide some insight into some aspects of the
representation theory of the symmetric group that are still not well understood.
Indeed, we show that the multiplicities when we restrict an irreducible multiset partition algebra representation
to sums of Young-type subalgebras are equal to the Kronecker coefficients.

A recent paper by Narayanan, Paul and Srivastava \cite{NPS, Paul} describes the centralizer algebra
$\End_{S_n}( \bP^{\alpha_1}(V_n) \otimes \bP^{\alpha_2}(V_n) \otimes \cdots \otimes \bP^{\alpha_k}(V_n))$
for a fixed weak composition $\alpha$ of length $k$.
This should be isomorphic to a subalgebra of the multiset partition algebra
introduced in this paper, but their product does not precisely agree with ours
and so the relationship is left open.

We did not consider in the definition of this algebra an extension corresponding to the
linear transformations in $\Hom( \bP^r( V_{n,k} ), \bP^s( V_{n,k}))$ where $r,s$
are non-negative integers which
commute with the action of $S_n$.
This centralizer algebra would depend on
$4$ parameters, $r,s,k,n$, and is spanned by multiset partitions with $r$ elements from
$\{1,2,\ldots,k\}$ and $s$ elements from $\{\o1,\o2,\ldots,\o{k}\}$.
This adds a bit to the complexity of the presentation, but we
did not have much more to say about it beyond a formula for the
dimension (see the comment in Remark \ref{rem:dim}).

The main results in this paper are:
\begin{enumerate}
\item A definition of the generic multiset partition algebra $\MP_{r,k}(x)$
and a proof that the product is associative (Proposition \ref{prop:associative}).
\item A definition of the centralizer algebra $\AA_{r,k}(n)$ and a proof
that $\MP_{r,k}(n) \cong \AA_{r,k}(n)$ when $n \geq 2r$ (Theorem \ref{th:isomorphism}).
\item A generating function formula for the dimension of $\AA_{r,k}(n)$
(Corollary \ref{cor:dimformula}).
\item A generating function formula for the dimension of an irreducible $\AA_{r,k}(n)$ indexed by a partition
$\lambda$ of $n$ (Proposition \ref{prop:dimirred}).
\item A formula for the branching coefficient of an irreducible
$\AA_{r,k}(n)$ to an $\AA_{r-d,k-1}(n)$-module (Theorem \ref{th:branchingrule}).
\item A proof that the multiplicities of an irreducible
$\AA_{r,k+\ell}(n)$-module restricted to a direct sum of $\AA_{d,k}(n) \otimes \AA_{r-d,\ell}(n)$-modules
are the Kronecker coefficients (Theorem \ref{th:kronrestrict}).
\end{enumerate}

In Section \ref{sec:notation} of this paper, we begin by introducing some of the combinatorial
notation of multisets, multiset partitions, diagrams and colored
multiset partitions and diagrams that we will need to encode
the basis elements and their product.  Section \ref{sec:msp_algebra}
presents the definitions of the generic form of the
multiset partition algebra.  Then in Section \ref{sec:centralizer} we show
that the generic multiset partition algebra is isomorphic to
the centralizer algebra $\AA_{r,k}(n)$ when $x=n \geq 2r$.
In Section \ref{sec:irreps} we give some enumerative results
and formulae for the dimensions of the irreducible representations
of $\AA_{r,k}(n)$.  In addition, we describe a branching rule and
the show that Kronecker coefficients occur when restricting irreducible
representations to direct sum of Young-type subalgebras.

\vskip .2in
\noindent
{\bf Acknowledgement:} The authors would like to thank Yohana Solomon whose helpful conversations
advanced some of the arguments in this paper.

\section{Preliminaries and Definitions}\label{sec:notation}

\subsection{Multisets and multiset partitions } \label{ssec:multset}
A \defn{multiset} is a collection of unordered elements and the elements can be repeated.   The collection of elements in a
multisets will be enclosed in $\dcl, \dcr$ to differentiate this structure from a set.
We say that $S$ is a multiset of $[k] := \{1,2,\ldots,k\}$
of size $r$ if $S$ contains $r$ elements (counting repetitions)
from the set $[k]$ (e.g. $\dcl 1,1,3,3,3\dcr$ is a multiset of $[3]$ with 5 elements).
The multiset $S$ will sometimes be represented using exponential notation $S = \dcl 1^{a_1}, 2^{a_2}, \ldots, k^{a_k} \dcr$
where the exponent $a_i$ indicates that the element $i$
occurs with multiplicity $a_i$ in $S$.  If the size of the multiset is $r$
then $a_1+a_2 + \cdots + a_k = r$.  The set of multisets
is endowed with the operation of taking union and if
$S=\dcl 1^{a_1}, 2^{a_2}, \ldots, k^{a_k} \dcr$ and $T = \dcl 1^{b_1}, 2^{b_2}, \ldots, k^{b_k} \dcr$
are multisets, then
$S \uplus T = \dcl 1^{a_1+b_1}, 2^{a_2+b_2}, \ldots, k^{a_k+b_k} \dcr$.
Multisets (like the monomials that they represent) are not uniquely totally ordered.
To assume a convention, given two multisets $S$ and $T$
 we say that $S< T$ if $S$ is empty and $T$ is not or $max(S)<max(T)$ or $max(S)=max(T)=m$ and
$S \backslash \dcl m \dcr < T \backslash \dcl m \dcr$.
We call this the \defn{last letter order}.

A \defn{multiset partition} of a multiset $S$ is a multiset of
multisets, $\dcl S^{(1)}, S^{(2)}, \ldots, S^{(d)}\dcr$, such that
$S^{(1)} \uplus S^{(2)} \uplus \cdots \uplus S^{(d)} = S$
where each of the $S^{(i)}$ are non-empty multisets.
We will assume the convention that if
$\pi = \dcl S^{(1)}, S^{(2)}, \ldots, S^{(d)}\dcr$ then
the parts of $\pi$ are listed in last letter order.
The nonempty multisets $S^{(i)}$ in a multiset partition are called
\defn{blocks} of the mulitset partition.    The \defn{length} of a
multiset partition is the number
of blocks (counted with multiplicity) and will be denoted by $\ell(\pi)$.
The \defn{content} of a multiset partition is the multiset
$S$ that it partitions.
For example, $\dcl \dcl 1\dcr, \dcl 1,2\dcr, \dcl 1,2\dcr, \dcl 3,3\dcr \dcr$
is a multiset partition with content $\dcl 1^3, 2^2, 3^2\dcr$ and
length 4.

For a multiset $S$ which is a block of a multiset partition $\pi$, let $m_S(\pi)$ represent
the number of times that $S$ occurs in $\pi$.  If $S^{(1)}, S^{(2)}, \ldots, S^{(r)}$
are the distinct multisets that appear in the multiset partition $\pi$, define
$$m(\pi)! = m_{S^{(1)}}(\pi)! m_{S^{(2)}}(\pi)! \cdots m_{S^{(r)}}(\pi)!~.$$

In this article we are interested in multiset partitions of multisets of  the set
$[k]\cup[\o k]:= \{1, 2, \ldots, k\} \cup \{\o1, \o2,\ldots, \o k\}$  such that $r$ elements
come from $[k]$ and $r$ elements come from $[\o k]$.  Thus the content
of the multiset partition is a multiset of size $2r$.   A multiset
partition of this type is said to have \defn{size} $r$ and
\defn{maximum value} $k$.   We will denote the set of
multiset partitions of size $r$ and max
value $k$ by $\Pi_{r,k}$.  The notation $\Pi_{r,k,n}$
will represent the subset of $\Pi_{r,k}$ of
multiset partitions with maximum length $n$.

\subsection{Diagrams}
The notion of a diagram that we use here comes from diagram algebras
(e.g. the Brauer algebra or partition algebra).
This is a graph theoretic representation of multiset partitions in the
same sense that diagrams have classically been used to represent
basis elements of centralizer algebras.

A multiset partition in $\Pi_{r,k,n}$ can be represented by a graph
with two horizontal rows with $r$ vertices each. The vertices
are labeled with the elements in the parts of the multisets in such a
way that the top row has only unbarred labels and the
bottom row has only barred labels and the numbers on both rows are
weakly increasing from left to right according to the order
$1<2<\ldots < k <\o1<\o2<\ldots< \o k $.   Two vertices in this graph
are connected by a path  if they are in the same block of
the same multiset partition. Thus, it follows that each block of the
multiset partition defines a connected component of the graph.
We remark that there are many graphs that can represent the same multiset partition, in fact any two graphs that
are have the same labeled connected components will represent the same multiset partition.
We define the \defn{diagram} of a multiset partition as the
equivalence class of graphs that satisfy the above conditions.
In our examples the blocks of the multiset partition will be connected by
paths or cycles.  However, as it is usual in diagram algebras,
we only care that there is a connected component containing the vertices for each
block of the multiset partition and not how they are connected.

\begin{example}\label{ex:firstmsp}
Consider the multiset partition
$\pi = \dcl \dcl 1\dcr, \dcl 1, 1\dcr, \dcl 2, \o1, \o1 \dcr, \dcl 4, \o2, \o4\dcr, \dcl \o4\dcr \dcr$ of $[4]\cup[\o 4]$
which is an element of $\Pi_{5, 4}$ of length $5$, i.e., an element in $\Pi_{5,4,n}$ with $n\geq5$.  This can also be represented by
any of the following graphs:
\begin{center}
\begin{tikzpicture}[scale = 0.5,thick, baseline={(0,-1ex/2)}]
\tikzstyle{vertex} = [shape = circle, minimum size = 4pt, inner sep = 1pt]
\node[vertex] (G--5) at (6.0, -1) [shape = circle, draw, fill] {};
\node[left] at (6.4, -1.6) {\tiny $\overline 4$};
\node[vertex] (G--4) at (4.5, -1) [shape = circle, draw, fill] {};
\node[left] at (4.9, -1.6) {\tiny $\overline 4$};
\node[vertex] (G--3) at (3.0, -1) [shape = circle, draw, fill] {};
\node[left] at (3.4, -1.6)  {\tiny $\overline 2$};
\node[vertex] (G-5) at (6.0, 1) [shape = circle, draw, fill] {};
\node[left] at (6.4, 1.6) {\tiny 4};
\node[vertex] (G-4) at (4.5, 1) [shape = circle, draw, fill] {};
\node[left] at (4.9, 1.6) {\tiny 2};
\node[vertex] (G--2) at (1.5, -1) [shape = circle, draw, fill] {};
\node[left] at (1.9, -1.6)  {\tiny $\overline 1$};
\node[vertex] (G--1) at (0.0, -1) [shape = circle, draw, fill] {};
\node[left] at (0.4, -1.6)  {\tiny $\overline 1$};
\node[vertex] (G-1) at (0.0, 1) [shape = circle, draw, fill] {};
\node[left] at (0.4, 1.6) {\tiny 1};
\node[vertex] (G-2) at (1.5, 1) [shape = circle, draw, fill] {};
\node[left] at (1.9, 1.6) {\tiny 1};
\node[vertex] (G-3) at (3.0, 1) [shape = circle, draw, fill] {};
\node[left] at (3.4, 1.6) {\tiny 1};
\draw (G-5) .. controls +(-0.75, -1) and +(0.75, 1) .. (G--4);
\draw (G--4) .. controls +(-0.5, 0.5) and +(0.5, 0.5) .. (G--3);
\draw (G-4) .. controls +(-1, -1) and +(1, 1) .. (G--2);
\draw (G--2) .. controls +(-0.5, 0.5) and +(0.5, 0.5) .. (G--1);
\draw (G-1) .. controls +(0.5, -0.5) and +(-0.5, -0.5) .. (G-2);
\end{tikzpicture}\hspace{40pt}
\begin{tikzpicture}[scale = 0.5,thick, baseline={(0,-1ex/2)}]
\tikzstyle{vertex} = [shape = circle, minimum size = 4pt, inner sep = 1pt]
\node[vertex] (G--5) at (6.0, -1) [shape = circle, draw, fill] {};
\node[left] at (6.4, -1.6) {\tiny $\overline 4$};
\node[vertex] (G--4) at (4.5, -1) [shape = circle, draw, fill] {};
\node[left] at (4.9, -1.6) {\tiny $\overline 4$};
\node[vertex] (G--3) at (3.0, -1) [shape = circle, draw, fill] {};
\node[left] at (3.4, -1.6)  {\tiny $\overline 2$};
\node[vertex] (G-5) at (6.0, 1) [shape = circle, draw, fill] {};
\node[left] at (6.4, 1.6) {\tiny 4};
\node[vertex] (G-4) at (4.5, 1) [shape = circle, draw, fill] {};
\node[left] at (4.9, 1.6) {\tiny 2};
\node[vertex] (G--2) at (1.5, -1) [shape = circle, draw, fill] {};
\node[left] at (1.9, -1.6)  {\tiny $\overline 1$};
\node[vertex] (G--1) at (0.0, -1) [shape = circle, draw, fill] {};
\node[left] at (0.4, -1.6)  {\tiny $\overline 1$};
\node[vertex] (G-1) at (0.0, 1) [shape = circle, draw, fill] {};
\node[left] at (0.4, 1.6) {\tiny 1};
\node[vertex] (G-2) at (1.5, 1) [shape = circle, draw, fill] {};
\node[left] at (1.9, 1.6) {\tiny 1};
\node[vertex] (G-3) at (3.0, 1) [shape = circle, draw, fill] {};
\node[left] at (3.4, 1.6) {\tiny 1};
\draw (G-5) .. controls +(0, 0) and +(0 ,0) .. (G--5);
\draw (G--5) .. controls +(-0.6, 0.6) and +(0.6, 0.6) .. (G--3);
\draw (G-4) .. controls +(-1, -1) and +(1, 1) .. (G--2);
\draw (G--2) .. controls +(-0.5, 0.5) and +(0.5, 0.5) .. (G--1);
\draw (G-1) .. controls +(0.6, -0.6) and +(-0.5, -0.5) .. (G-3);
\end{tikzpicture}\hspace{40pt}
\begin{tikzpicture}[scale = 0.5,thick, baseline={(0,-1ex/2)}]
\tikzstyle{vertex} = [shape = circle, minimum size = 4pt, inner sep = 1pt]
\node[vertex] (G--5) at (6.0, -1) [shape = circle, draw, fill] {};
\node[left] at (6.4, -1.6) {\tiny $\overline 4$};
\node[vertex] (G--4) at (4.5, -1) [shape = circle, draw, fill] {};
\node[left] at (4.9, -1.6) {\tiny $\overline 4$};
\node[vertex] (G--3) at (3.0, -1) [shape = circle, draw, fill] {};
\node[left] at (3.4, -1.6)  {\tiny $\overline 2$};
\node[vertex] (G-5) at (6.0, 1) [shape = circle, draw, fill] {};
\node[left] at (6.4, 1.6) {\tiny 4};
\node[vertex] (G-4) at (4.5, 1) [shape = circle, draw, fill] {};
\node[left] at (4.9, 1.6) {\tiny 2};
\node[vertex] (G--2) at (1.5, -1) [shape = circle, draw, fill] {};
\node[left] at (1.9, -1.6)  {\tiny $\overline 1$};
\node[vertex] (G--1) at (0.0, -1) [shape = circle, draw, fill] {};
\node[left] at (0.4, -1.6)  {\tiny $\overline 1$};
\node[vertex] (G-1) at (0.0, 1) [shape = circle, draw, fill] {};
\node[left] at (0.4, 1.6) {\tiny 1};
\node[vertex] (G-2) at (1.5, 1) [shape = circle, draw, fill] {};
\node[left] at (1.9, 1.6) {\tiny 1};
\node[vertex] (G-3) at (3.0, 1) [shape = circle, draw, fill] {};
\node[left] at (3.4, 1.6) {\tiny 1};
\draw (G-5) .. controls +(-0.75, -1) and +(0.75, 1) .. (G--4);
\draw (G--4) .. controls +(-0.5, 0.5) and +(0.5, 0.5) .. (G--3);
\draw (G--3) .. controls +(1, 1) and +(-1, -1) .. (G-5);
\draw (G-4) .. controls +(-1, -1) and +(1, 1) .. (G--2);
\draw (G--2) .. controls +(-0.5, 0.5) and +(0.5, 0.5) .. (G--1);
\draw (G--1) .. controls +(1, 1) and +(-1, -1) .. (G-4);
\draw (G-1) .. controls +(0.5, -0.5) and +(-0.5, -0.5) .. (G-2);
\end{tikzpicture}
\end{center}
These three graphs all represent the same multiset partition.
The first two are isomorphic as labeled graphs, but the last one is not isomorphic as a graph.
All three are equivalent as multiset partitions as we only care about using the connectivity
to represent the blocks of the multiset partition. We note that there are more labeled graphs
that can be used to represent this multiset partition.
\end{example}

\subsection{Colored multiset partitions} \label{ssec:coloredmsp}
In order to define the product in our algebra we will need colored
multiset partitions.  Let $n$ be a non-negative integer.
A \defn{colored multiset partition} is an element in $\pi$ in $\Pi_{r,k,n}$
together with an assignment of a label from $[n]$ to each block of $\pi$, such that each
block has a different label (these labels are the `colors'). More precisely, given a multiset
partition  $\pi=\dcl S^{(1)}, \ldots, S^{(d)}\dcr \in \Pi_{r,k,n}$ with
$S^{(i)} \leq S^{(i+1)}$ and $\mathbf{c}=(c_1,\ldots, c_d)\in [n]^d$ and a
sequence such that $c_i\neq c_j$ if $i \neq j$ and $c_i<c_{i+1}$ if $S^{(i)}=S^{(i+1)}$, then the notation
$\pi^{\mathbf{c}} = \{S^{(1)}_{c_1}, \ldots, S^{(d)}_{c_d}\}$ will
denote that the color $c_i$ was assigned to block $S^{(i)}$. We remark
that a colored multiset partition is a set since the colors are
distinct and so the colored multisets are also distinct.
Notice that $n\geq d$, otherwise it is not possible to assign distinct colors to all the parts.
We will say that $\pi^{\mathbf{c}}$ is a coloring of $\pi$.

\begin{example}\label{ex:firstcmsp}
Let $n=7$, $\pi = \dcl \dcl 1\dcr, \dcl 1, 1\dcr, \dcl 2, \o1, \o1 \dcr, \dcl \o4\dcr, \dcl 4, \o2, \o4\dcr \dcr$
and $\mathbf{c}=(1,3,5,2,6)$, $\pi^{(1,3,5,2,6)}
=\{ \dcl 1\dcr_{\bf 1}, \dcl 1, 1\dcr_{\bf 3}, \dcl 2, \o1, \o1 \dcr_{\bf 5},
\dcl \o4\dcr_{\bf 2}, \dcl 4, \o2, \o4\dcr_{\bf 6} \}$ is a colored multiset partition.
This can also be represented using diagrams using bold integers to indicate the colors of the connected component.

\begin{center}$\pi^{(1,3,5,6,2)} = $
\begin{tikzpicture}[scale = 0.5,thick, baseline={(0,-1ex/2)}]
\tikzstyle{vertex} = [shape = circle, minimum size = 4pt, inner sep = 1pt]
\node[vertex] (G--5) at (6.0, -1) [shape = circle, draw, fill] {};
\node[left] at (6.4, -1.6) {\tiny $\overline 4$};
\node[vertex] (G--4) at (4.5, -1) [shape = circle, draw, fill] {};
\node[left] at (4.9, -1.6) {\tiny $\overline 4$};
\node[vertex] (G--3) at (3.0, -1) [shape = circle, draw, fill] {};
\node[left] at (3.4, -1.6)  {\tiny $\overline 2$};
\node[vertex] (G-5) at (6.0, 1) [shape = circle, draw, fill] {};
\node[left] at (6.4, 1.6) {\tiny 4};
\node[vertex] (G-4) at (4.5, 1) [shape = circle, draw, fill] {};
\node[left] at (4.9, 1.6) {\tiny 2};
\node[vertex] (G--2) at (1.5, -1) [shape = circle, draw, fill] {};
\node[left] at (1.9, -1.6)  {\tiny $\overline 1$};
\node[vertex] (G--1) at (0.0, -1) [shape = circle, draw, fill] {};
\node[left] at (0.4, -1.6)  {\tiny $\overline 1$};
\node[vertex] (G-1) at (0.0, 1) [shape = circle, draw, fill] {};
\node[left] at (0.4, 1.6) {\tiny 1};
\node[vertex] (G-2) at (1.5, 1) [shape = circle, draw, fill] {};
\node[left] at (1.9, 1.6) {\tiny 1};
\node[vertex] (G-3) at (3.0, 1) [shape = circle, draw, fill] {};
\node[left] at (3.4, 1.6) {\tiny 1};
\draw (G-5) .. controls +(-0.75, -1) and +(0.75, 1) .. (G--4);
\node at (0.7,0.3) {\tiny \bf 3};
\node at (1.7,-0.4) {\tiny \bf 5};
\node at (2.9,0.6) {\tiny \bf 1};
\node at (4.3,-0.4) {\tiny \bf 6};
\node at (5.9,-0.5) {\tiny \bf 2};
\draw (G--4) .. controls +(-0.5, 0.5) and +(0.5, 0.5) .. (G--3);
\draw (G-4) .. controls +(-1, -1) and +(1, 1) .. (G--2);
\draw (G--2) .. controls +(-0.5, 0.5) and +(0.5, 0.5) .. (G--1);
\draw (G-1) .. controls +(0.5, -0.5) and +(-0.5, -0.5) .. (G-2);
\end{tikzpicture}
\end{center}
\end{example}

\begin{remark}
If we order the blocks using the last letter order and since $\mathbf{c}$
is an ordered sequence, we get a unique colored multiset partition for
each multiset partition in $\Pi_{r,k,n}$ and $\mathbf{c}$.
However, when we draw the diagram of such a colored multiset partition
we could list the equal blocks colored with different colors in any order.
\begin{align*}
\begin{tikzpicture}[scale = 0.45,thick, baseline={(0,-1ex/2)}]
\tikzstyle{vertex} = [shape = circle, minimum size = 4pt, inner sep = 1pt]
\node[vertex] (G--4) at (4.5, -1) [shape = circle, draw,fill] {};
\node[vertex] (G--3) at (3.0, -1) [shape = circle, draw,fill] {};
\node[vertex] (G-3) at (3.0, 1) [shape = circle, draw,fill] {};
\node[vertex] (G-4) at (4.5, 1) [shape = circle, draw, fill] {};
\node[vertex] (G--2) at (1.5, -1) [shape = circle, draw,fill] {};
\node[vertex] (G--1) at (0.0, -1) [shape = circle, draw,fill] {};
\node[vertex] (G-1) at (0.0, 1) [shape = circle, draw, fill] {};
\node[vertex] (G-2) at (1.5, 1) [shape = circle, draw,fill] {};
\node[left] at (5.1, 1.6) {\tiny $2$};
\node[left] at (3.6, 1.6) {\tiny $1$};
\node[left] at (2.1, 1.6) {\tiny $1$};
\node[left] at (0.6, 1.6) {\tiny $1$};
\node[left] at (5.1, -1.6) {\tiny $\o1$};
\node[left] at (3.6, -1.6) {\tiny $\o1$};
\node[left] at (2.1, -1.6) {\tiny $\o1$};
\node[left] at (0.6, -1.6) {\tiny $\o1$};
\draw (G-3) .. controls +(0.5, -0.5) and +(-0.5, -0.5) .. (G-4);
\draw (G--4) .. controls +(-0.5, 0.5) and +(0.5, 0.5) .. (G--3);
\draw (G--2) .. controls +(-0.5, 0.5) and +(0.5, 0.5) .. (G--1);
\draw (G-1) .. controls +(0.5, -0.5) and +(-0.5, -0.5) .. (G-2);
\node at (0.75,0.35) {\tiny \bf 1};
\node at (3.75,0.35) {\tiny \bf 2};
\node at (0.75,-.35) {\tiny \bf 3};
\node at (3.75,-.35) {\tiny \bf 4};
\end{tikzpicture}
\quad \text{ is the same as }\quad
\begin{tikzpicture}[scale = 0.45,thick, baseline={(0,-1ex/2)}]
\tikzstyle{vertex} = [shape = circle, minimum size = 4pt, inner sep = 1pt]
\node[vertex] (G--4) at (4.5, -1) [shape = circle, draw,fill] {};
\node[vertex] (G--3) at (3.0, -1) [shape = circle, draw,fill] {};
\node[vertex] (G-3) at (3.0, 1) [shape = circle, draw,fill] {};
\node[vertex] (G-4) at (4.5, 1) [shape = circle, draw, fill] {};
\node[vertex] (G--2) at (1.5, -1) [shape = circle, draw,fill] {};
\node[vertex] (G--1) at (0.0, -1) [shape = circle, draw,fill] {};
\node[vertex] (G-1) at (0.0, 1) [shape = circle, draw, fill] {};
\node[vertex] (G-2) at (1.5, 1) [shape = circle, draw,fill] {};
\node[left] at (5.1, 1.6) {\tiny $2$};
\node[left] at (3.6, 1.6) {\tiny $1$};
\node[left] at (2.1, 1.6) {\tiny $1$};
\node[left] at (0.6, 1.6) {\tiny $1$};
\node[left] at (5.1, -1.6) {\tiny $\o1$};
\node[left] at (3.6, -1.6) {\tiny $\o1$};
\node[left] at (2.1, -1.6) {\tiny $\o1$};
\node[left] at (0.6, -1.6) {\tiny $\o1$};
\draw (G-3) .. controls +(0.5, -0.5) and +(-0.5, -0.5) .. (G-4);
\draw (G--4) .. controls +(-0.5, 0.5) and +(0.5, 0.5) .. (G--3);
\draw (G--2) .. controls +(-0.5, 0.5) and +(0.5, 0.5) .. (G--1);
\draw (G-1) .. controls +(0.5, -0.5) and +(-0.5, -0.5) .. (G-2);
\node at (0.75,0.35) {\tiny \bf 1};
\node at (3.75,0.35) {\tiny \bf 2};
\node at (0.75,-.35) {\tiny \bf 4};
\node at (3.75,-.35) {\tiny \bf 3};
\end{tikzpicture}~.
\end{align*}
\end{remark}

 Let $\pi\in \Pi_{r,k,n}$ and $\pi^\bc$ a coloring of $\pi$,  denote by $\pi^\bc_{top}$ to be the colored multiset
partition restricted to the entries in $[k]$
while $\pi^\bc_{bot}$ will be the colored multiset
partition restricted to the entries in $[\o{k}]$ and then ignoring the bars.
Similarly, we will use $\pi_{top}$ and $\pi_{bot}$ to represent the corresponding restrictions
for uncolored multiset partitions. Consider $\pi^{(1,3,5,6,2)}$ from Example \ref{ex:firstcmsp}, then
$\pi^\bc_{top} = \dcl  \dcl  1  \dcr_{\bf 1}, \dcl   1, 1 \dcr_{\bf 3} , \dcl  2  \dcr_{\bf 5} , \dcl   4 \dcr_{\bf 6}\dcr$
and $\pi^\bc_{bot} = \dcl  \dcl   1, 1  \dcr_{\bf 5}, \dcl  4  \dcr_{\bf 2}, \dcl  2, 4 \dcr_{\bf 6}  \dcr$,
notice that we have left out the bars in $\pi^\bc_{bot}$.

We use the notation $\pi^\bc \rightarrow \pi$ if $\pi$ is obtained from
$\pi^\bc$  by ignoring the colors.
That is,  if $\pi^\bc = \{ S^{(1)}_{c_1}, S^{(2)}_{c_2}, \ldots, S^{(d)}_{c_d} \}$ for some multisets
$S^{(i)}$ and values $c_i \in [n]$ then  $\pi = \dcl S^{(1)}, S^{(2)}, \ldots, S^{(d)} \dcr$.
In this case we also say that $\pi$ is the underlying multiset partition
of $\pi^\bc$.

\subsection{Symmetric Functions}
\label{subsec:sfnotation}
A \defn{partition} is a sequence $\lambda_1 \geq \la_2 \geq \cdots \geq \la_{\ell(\la)}$
where $\ell(\la)$ denotes the number of nonzero terms (called parts) of $\lambda$.  To indicate that
$\lambda$ is a partition we use the notation $\lambda \in \PP$.   If in addition, we have that
$\lambda_1 + \lambda_2 + \cdots + \lambda_{\ell(\lambda)} = n$, we say that $\lambda$ is a partition of $n$
and we use $\lambda \in \PP_n$ to indicate that $\lambda$ is a partition of $n$.  In general,
we use greek letters such as $\lambda$ and $\mu$ to denote partitions.

To prove the results about the representation theory or our algebra we
will use results from the theory of symmetric
functions.  For this we need to introduce notation and well known
results in the theory, these results can be found in  \cite{Mac}, \cite{Sta}.
We define the ring of symmetric functions as the polynomial
ring
$$\Lambda = {\mathbb Q}[ p_1, p_2, p_3, \ldots ]$$
where for each $i\geq 1$, the $p_i$ is the power sum generator of degree $i$.
For any partition $\lambda$, $p_\lambda = p_{\lambda_1} \cdots p_{\lambda_{\ell(\lambda)}}$.
The ring $\Lambda$ has  distinguished bases
$\{ p_\lambda \}_{\lambda \in \PP}$,
$\{ e_\lambda \}_{\lambda \in \PP}$,
$\{ h_\lambda \}_{\lambda \in \PP}$,
$\{ m_\lambda \}_{\lambda \in \PP}$ and
$\{ s_\lambda \}_{\lambda \in \PP}$
as the power sum, elementary, complete homogeneous, monomial and Schur bases respectively.
Let $z_\lambda = \prod_{i\geq1} i^{m_i(\lambda)} m_i(\lambda)!$
where $m_i(\lambda)$ is the number of parts of size $i$ in the partition $\lambda$.
We use the Hall-scalar product on symmetric functions
which is defined so that
$\left< s_\lambda, s_\mu \right> = \left< p_\lambda, p_\mu/z_\mu \right>
= \left< h_\lambda, m_\mu \right> = \delta_{\lambda\mu}$, where $\delta_{\lambda\mu}$ is 1 if
$\lambda = \mu$ and zero otherwise.

In addition to the ring structure of $\Lambda$ as a polynomial ring, there is a second
product called the \defn{Kronecker product} defined on the power sum basis by
$\frac{p_\mu}{z_\mu} \ast \frac{p_\lambda}{z_\lambda} = \delta_{\lambda\mu} \frac{p_\lambda}{z_\lambda}$.
It is defined so that for all $\lambda, \mu$ and $\nu$ which are partitions of
a positive integer $n$,
$$\left< s_\lambda \ast s_\mu, s_\nu \right> = \dim~\Hom( W^\nu_{\CC S_n}, W^\lambda_{\CC S_n} \otimes
W^\mu_{\CC S_n} ) := g_{\la\mu\nu}~.$$
where $W^\lambda_{\CC S_n}$ denotes the irreducible representation of the symmetric group algebra, $\CC S_n$,  indexed by
the partition $\lambda$.  The coefficients $g_{\la\mu\nu}$ are known as \defn{Kronecker coefficients}.

We will make use of `plethystic notation' for some of the
symmetric function expressions (see \cite{LR} for an expository reference).
Let $E(x_1, x_2, \ldots; q) = E(X;q)$ be an algebraic expression in indeterminates $q, x_1, x_2, \ldots$,
then for an element $f \in \Lambda$,
the evaluation of a symmetric function at an expression $E(X;q)$
is denoted $f[E(X;q)]$ and is defined
by expanding $f$ in the power sum basis and replacing each $p_r$ by
$E(x_1^r, x_2^r, \ldots; q^r)$.  That is, if $f = \sum_\lambda c_\lambda p_\lambda$, then
$$f[E(X;q)] = \sum_{\lambda} c_\lambda \prod_{i=1}^{\ell(\lambda)}
E(x_1^{\lambda_i}, x_2^{\lambda_i}, \ldots; q^{\lambda_i})~.$$
We will use $X$ (or $Y$) to represent the sum $x_1+ x_2+ x_3+\cdots$
(or $y_1+ y_2+ y_3+\cdots$) and use freely that ${\mathbb Q}[ p_1, p_2, p_3, \ldots ] \cong
{\mathbb Q}[ p_1[X], p_2[X], p_3[X], \ldots ]$.

The following symmetric function identities will be used to obtain a generating function expression in
Section \ref{sec:irreps}.  In the Lemma, for any integer $r$,  the $s_r$ denotes the Schur function
indexed by partition consisting of one part $r$.

\begin{lemma}\label{lem:sfids} \cite[Ch. 1, Sec. 4 and 5]{Mac} Let $\Omega = 1+s_1+s_2+s_3+\cdots$,
\begin{align*}
(a)& \hskip .2in \Omega[XY] = \prod_{i \geq 1}\prod_{j \geq 1} \frac{1}{1-x_iy_j}
= \sum_{\lambda\in\PP} s_\lambda[X] s_\lambda[Y],\\
(b)& \hskip .2in \left< \Omega[XY], f[X] \right> = f[Y],\\
(c)& \hskip .2in \Omega[X+Y] = \Omega[X]\Omega[Y]\hbox{ or }s_r[X+Y] = \sum_{a=0}^r s_a[X] s_{r-a}[Y]~.\\
\end{align*}
\end{lemma}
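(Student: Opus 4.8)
The plan is to reduce all three identities to two elementary properties of plethystic substitution into power sums, combined with the Cauchy expansion of the reproducing kernel. The essential preliminary observation is that the Schur function $s_r$ indexed by a single row equals the complete homogeneous symmetric function $h_r$, so that $\Omega = \sum_{r\geq 0} h_r$ is the generating series of the $h_r$. Taking the logarithm of the product formula $\sum_{r\geq 0} h_r[Z] = \prod_i (1-z_i)^{-1}$ gives the exponential form
$$\Omega[Z] = \exp\Bigl( \sum_{k\geq 1} \frac{p_k[Z]}{k} \Bigr) = \sum_{\lambda\in\PP} \frac{p_\lambda[Z]}{z_\lambda},$$
where the last equality comes from expanding the exponential and collecting the chosen factors $p_k$ into the partition $\lambda$ recording their multiplicities. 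All three parts then follow by substituting the appropriate alphabet for $Z$ and invoking one of the two rules $p_k[XY]=p_k[X]\,p_k[Y]$ (multiplicativity on products) and $p_k[X+Y]=p_k[X]+p_k[Y]$ (additivity on sums).

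For part (a) I would set $Z=XY$; multiplicativity gives $p_\lambda[XY]=p_\lambda[X]\,p_\lambda[Y]$, so the exponential form yields at once
$$\Omega[XY] = \prod_{i\geq 1}\prod_{j\geq 1}\frac{1}{1-x_iy_j} = \sum_{\lambda\in\PP}\frac{p_\lambda[X]\,p_\lambda[Y]}{z_\lambda}.$$
The one substantive step is to rewrite this last sum as $\sum_\lambda s_\lambda[X]\,s_\lambda[Y]$, i.e.\ the Cauchy identity, and this is where I expect the real content to lie. I would argue via the reproducing kernel: with respect to the Hall inner product of Section \ref{subsec:sfnotation}, both $\{p_\lambda/\sqrt{z_\lambda}\}$ and $\{s_\lambda\}$ are orthonormal bases (the former because $\langle p_\lambda,p_\mu\rangle = z_\lambda\delta_{\lambda\mu}$, the latter by the definition of the inner product), and for any orthonormal basis $\{u_\lambda\}$ the diagonal sum $\sum_\lambda u_\lambda[X]\,u_\lambda[Y]$ is independent of the chosen basis (an orthogonal change of basis leaves it fixed). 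Hence the two sums coincide.

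Parts (b) and (c) are then formal consequences. For (b), expand $f=\sum_\mu c_\mu s_\mu$ in the Schur basis and pair in the $X$ variables: by part (a) and orthonormality of the Schur functions, $\langle\Omega[XY],f[X]\rangle_X = \sum_\lambda \langle s_\lambda[X],f[X]\rangle_X\, s_\lambda[Y] = \sum_\lambda c_\lambda\, s_\lambda[Y]=f[Y]$, which is precisely the reproducing property of the kernel. For (c), substitute $Z=X+Y$ into the exponential form and use additivity of the power sums to factor the exponential as $\Omega[X+Y]=\Omega[X]\,\Omega[Y]$; extracting the homogeneous degree-$r$ component from $\bigl(\sum_a s_a[X]\bigr)\bigl(\sum_b s_b[Y]\bigr)$ and matching it against $s_r[X+Y]$ gives $s_r[X+Y]=\sum_{a=0}^r s_a[X]\,s_{r-a}[Y]$. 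The only genuinely nontrivial ingredient in the whole argument is the Schur-basis form of the Cauchy identity used in part (a); everything else is bookkeeping with the multiplicative and additive behaviour of power sums under plethysm.
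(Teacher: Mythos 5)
Your proof is correct and complete: the exponential form $\Omega[Z]=\exp\bigl(\sum_{k\geq1}p_k[Z]/k\bigr)=\sum_\lambda p_\lambda[Z]/z_\lambda$, the basis-independence of the diagonal kernel (giving the Cauchy identity in part (a)), the reproducing property for (b), and additivity of power sums for (c) all go through, given that the paper's definition of the Hall scalar product already makes both $\{s_\lambda\}$ and $\{p_\lambda/\sqrt{z_\lambda}\}$ orthonormal. The paper offers no proof of its own — the lemma is simply quoted from Macdonald — and your derivation is essentially the standard argument of Ch.~I, \S 4 of that reference, so there is nothing substantive to compare.
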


\section{The generic multiset partition algebra} \label{sec:msp_algebra}

Let $r$ and $k$ be non-negative integers and $x$ be a nonzero indeterminate.
In Section \ref{ssec:multset} we defined $\Pi_{r,k}$ as the set of
multiset partitions of size $r$ and maximum value $k$.
Define $\MP_{r,k}(x)$ to be the $\mathbb{C}(x)$-span of elements
indexed by  multiset partitions with coefficients in $\mathbb{C}(x)$, i.e,
${\rm span}_{\CC(x)}\{ X_\pi \hbox{ for }\pi \in \Pi_{r,k} \}$.
We set $\MP_{0,0}(x)=\mathbb{C}(x)$ and we identify it with the linear span of the empty multiset partition.

In what follows we will define a product on basis elements, $X_\pi$, where we identify
the diagram for $\pi$ with the basis element.  This product will
make $\MP_{r,k}(x)$ an associative algebra over $\mathbb{C}(x)$ with an identity element.

For $\gamma \in \Pi_{r,k}$, let $\o\gamma$ be the multiset partition of $[\o{k}] \cup [\b{\b{k}}]$
formed by adding an extra bar to all the entries of $\gamma$.
Let $A$ be a set and $S$ a multiset, then let $S|_A$ be the restriction of the multiset
$S$ to the elements which are in $A$ (e.g. $S|_A = \dcl i \in S : i \in A\dcr$)
and for a multiset partition $\gamma$, $\gamma|_A$ is the (possibly empty) multiset partition
$\dcl S|_A : S \in \gamma \dcr$, that is $\gamma|_A$ is the multiset partition $\gamma$
restricted to the elements in $A$.

The expression for the product will involve multiset partitions with entries in $[k] \cup [\o{k}] \cup [\b{\b{k}}]$.
Let $\Gamma_{r,k}$ denote the set of multiset partitions with $r$ entries from $[k]$,  $r$ entries from $[\o{k}]$
and $r$ entries from
$[\b{\b{k}}]$ and let $\nu$ be an element of $\Gamma_{r,k}$.
For each distinct multiset $S$ in $\nu |_{[k] \cup [\b{\b{k}}]}$, let
$\nu_S$ be the multiset partition consisting of the parts $T$ of $\nu$
such that $T |_{[k] \cup [\b{\b{k}}]} = S$, that is,
$$\nu_S = \dcl T \in \nu :  T |_{[k] \cup [\b{\b{k}}]} = S \dcr~.$$
Also let $\beta = \dcl S \in \nu : \forall i \in S, i \in [\o{k}] \dcr$ be the (possibly empty)
multiset partition with just the parts of $\nu$ which have elements only in $[\o{k}]$.
Then define the coefficient
\begin{equation}\label{eq:defanu}
a_\nu = \prod_{S} {\ell(\nu_S)!}/{m(\nu_S)!}
\end{equation}
where the product is over the distinct multisets $S$ which appear in $\nu |_{[k] \cup [\b{\b{k}}]}$.
Also define
\begin{equation}\label{eq:defbnu}
b_\nu(x) = (x-\ell(\nu |_{[k] \cup [\b{\b{k}}]}))_{\ell(\beta)}/m(\beta)!
\end{equation}
 where $(a)_r = a(a-1)\cdots(a-r+1)$.

\begin{example}  Let $r=9$ and $k=2$
and consider, for example, the multiset partition
$\nu = \dcl \dcl1 \dcr$,  $\dcl1,\o1 \dcr$,   $\dcl1,\o2 \dcr$, $\dcl1,\o2 \dcr$,
$\dcl1,\b{\b2} \dcr$,
$\dcl1,2,\b{\b1} \dcr$, $\dcl1,2, \b{\b1},\b{\b1}\dcr$,
$\dcl\o1 \dcr$, $\dcl\o1 \dcr$, $\dcl \o1, \o2 \dcr$,
$\dcl \o1, \o2, \b{\b1} \dcr$, $\dcl\b{\b2},\b{\b2} \dcr$,
$\dcl\b{\b2},\b{\b2} \dcr \dcr$.  Now
$\nu |_{[k] \cup [\b{\b{k}}]} = \dcl \dcl1 \dcr$,  $\dcl1 \dcr$,   $\dcl1 \dcr$, $\dcl1 \dcr$,
$\dcl1,\b{\b2} \dcr$,
$\dcl1,2,\b{\b1} \dcr$, $\dcl1,2, \b{\b1},\b{\b1}\dcr$,
$\dcl\b{\b1} \dcr$, $\dcl\b{\b2},\b{\b2} \dcr$,
$\dcl\b{\b2},\b{\b2} \dcr \dcr$ has $6$ distinct multisets and has length $10$.
Below we list the distinct multisets $S$ of $\nu |_{[k] \cup [\b{\b{k}}]}$ and the corresponding $\nu_S$.

\begin{center}
\begin{tabular}{c|c}
$S$&$\nu_S$\\
\hline
$\dcl1 \dcr$& $\dcl \dcl1 \dcr$,  $\dcl1,\o1 \dcr$,   $\dcl1,\o2 \dcr$, $\dcl1,\o2 \dcr \dcr$\\
$\dcl1,\b{\b2} \dcr$& $\dcl \dcl1,\b{\b2} \dcr \dcr$\\
$\dcl1,2,\b{\b1} \dcr$& $\dcl \dcl1,2,\b{\b1} \dcr \dcr$\\
$\dcl1,2, \b{\b1},\b{\b1}\dcr$& $\dcl \dcl1,2, \b{\b1},\b{\b1}\dcr \dcr$\\
$\dcl \b{\b1} \dcr$ & $\dcl \dcl \o1, \o2, \b{\b1} \dcr \dcr$\\
$\dcl\b{\b2},\b{\b2} \dcr$ & $\dcl \dcl\b{\b2},\b{\b2} \dcr$, $\dcl\b{\b2},\b{\b2} \dcr \dcr$
\end{tabular}
\end{center}

From this table we conclude that $a_\nu = 12$ (for the multiset $S = \dcl1 \dcr$ is ${\ell(\nu_S)!}/{m(\nu_S)!}=12$, while from
each of the other multisets $S$, ${\ell(\nu_S)!}/{m(\nu_S)!}=1$).

The multiset partition $\beta$ consisting of the parts with only entries in $[\o{2}]$
is $\beta = \dcl \dcl\o1 \dcr$, $\dcl\o1 \dcr$, $\dcl \o1, \o2 \dcr \dcr$, and hence
$b_\nu(x) = (x-10)(x-11)(x-12)/2$
\end{example}

Let $\pi$ and $\gamma$ be elements of $\Pi_{r,k}$, define the product in $\MP_{k,r}(x)$ by
\begin{equation}\label{eq:genericproduct}
X_\pi \cdot X_\gamma = \sum_{\nu} a_\nu b_{\nu} (x) X_{\nu|_{[k] \cup [\b{\b{k}}]}}
\end{equation}
where the sum is over all multiset partitions $\nu \in \Gamma_{r,k}$
such that $\nu |_{[k] \cup [\o{k}]} = \pi$ and $\nu|_{[\o{k}] \cup [\b{\b{k}}]} = \o\gamma$
and where, by convention, we consider $\nu|_{[k] \cup [\b{\b{k}}]}$ to be a multiset
partition with entries in $[k] \cup [\o{k}]$ by removing the second bar on elements in $[\b{\b{k}}]$.
By definition, this product is zero if $\pi |_{[\o{k}]} \neq \o{\gamma} |_{[\o{k}]}$
(that is, if the lower row of $\pi$ does not have the same multisets as the upper row of $\o{\gamma}$).

The following example is relatively small, but shows the dependence on the
multiset partitions in $\Gamma_{r,k}$.
\begin{example} Let $\pi = \dcl \dcl 1 \dcr, \dcl 1, \o{1} \dcr, \dcl \o{1} \dcr \dcr \in \Pi_{2,1}$,
then to calculate $X_\pi^2 \in \MP_{2,1}(x)$,
there are four multiset partitions $\nu$ which contribute to the sum with $\nu|_{[1] \cup [\o1]} = \pi$
and $\nu|_{[\o{1}] \cup [\b{\b{1}}]} = \o{\pi}$, they are:
\begin{align*}
&\dcl \dcl 1 \dcr, \dcl \o{1} \dcr, \dcl \b{\b{1}} \dcr, \dcl 1, \o{1}, \b{\b{1}} \dcr \dcr\\
&\dcl \dcl 1, \b{\b{1}} \dcr, \dcl \o{1} \dcr, \dcl 1, \o{1}, \b{\b{1}} \dcr \dcr\\
&\dcl \dcl 1 \dcr, \dcl 1,\o{1} \dcr, \dcl \o{1}, \b{\b{1}} \dcr, \dcl \b{\b{1}} \dcr \dcr\\
&\dcl \dcl 1, \b{\b{1}} \dcr, \dcl 1,\o{1} \dcr, \dcl \o{1}, \b{\b{1}} \dcr \dcr
\end{align*}
We construct the following table where each of the three columns contains the
diagrams representing the three multiset partitions $\nu$ and data representing their
contribution to the sum in Equation \eqref{eq:genericproduct}.
\begin{center}
\begin{tabular}{c|cccc}
three row msp $\nu$&
\begin{tikzpicture}[scale = 0.35,thick, baseline={(0,-1ex/2)}]
\tikzstyle{vertex} = [shape = circle, minimum size = 4pt, inner sep = 1pt]
\node[vertex] (G--2) at (1.5, -2) [shape = circle, draw, fill] {};
\node[vertex] (G--1) at (0.0, -2) [shape = circle, draw, fill] {};
\node[vertex] (G-1) at (0.0, 1) [shape = circle, draw,fill] {};
\node[vertex] (G-2) at (1.5, 1) [shape = circle, draw, fill] {};
\node[left] at (2.1, 1.6) {\tiny $1$};
\node[left] at (0.6, 1.6) {\tiny $1$};
\node[left] at (2.1, -2.7) {\tiny $\b{\b1}$};
\node[left] at (0.6, -2.7) {\tiny $\b{\b1}$};
\node[vertex] (G---2) at (1.5, -0.5) [shape = circle, draw, fill] {};
\node[vertex] (G---1) at (0.0, -0.5) [shape = circle, draw, fill] {};
\node[left] at (2.6, -1.1) {\tiny $\o1$};
\node[left] at (0.6, -1.1) {\tiny $\o1$};
\draw (G-2) .. controls +(0, 0)  .. (G---2);
\draw (G---2) .. controls +(0, 0)  .. (G--2);
\end{tikzpicture}
&
\begin{tikzpicture}[scale = 0.35,thick, baseline={(0,-1ex/2)}]
\tikzstyle{vertex} = [shape = circle, minimum size = 4pt, inner sep = 1pt]
\node[vertex] (G--2) at (1.5, -2) [shape = circle, draw, fill] {};
\node[vertex] (G--1) at (0.0, -2) [shape = circle, draw, fill] {};
\node[vertex] (G-1) at (0.0, 1) [shape = circle, draw,fill] {};
\node[vertex] (G-2) at (1.5, 1) [shape = circle, draw, fill] {};
\node[left] at (2.1, 1.6) {\tiny $1$};
\node[left] at (0.6, 1.6) {\tiny $1$};
\node[left] at (2.1, -2.7) {\tiny $\b{\b1}$};
\node[left] at (0.6, -2.7) {\tiny $\b{\b1}$};
\node[vertex] (G---2) at (1.5, -0.5) [shape = circle, draw, fill] {};
\node[vertex] (G---1) at (0.0, -0.5) [shape = circle, draw, fill] {};
\node[left] at (2.6, -1.1) {\tiny $\o1$};
\node[left] at (0.6, -1.1) {\tiny $\o1$};
\draw (G-2) .. controls +(0, 0)  .. (G---2);
\draw (G---2) .. controls +(0, 0)  .. (G--2);
\draw (G-1) .. controls +(0.8, -1) and +(0.8, 1)  .. (G--1);
\end{tikzpicture}
&
\begin{tikzpicture}[scale = 0.35,thick, baseline={(0,-1ex/2)}]
\tikzstyle{vertex} = [shape = circle, minimum size = 4pt, inner sep = 1pt]
\node[vertex] (G--2) at (1.5, -2) [shape = circle, draw, fill] {};
\node[vertex] (G--1) at (0.0, -2) [shape = circle, draw, fill] {};
\node[vertex] (G-1) at (0.0, 1) [shape = circle, draw,fill] {};
\node[vertex] (G-2) at (1.5, 1) [shape = circle, draw, fill] {};
\node[left] at (2.1, 1.6) {\tiny $1$};
\node[left] at (0.6, 1.6) {\tiny $1$};
\node[left] at (2.1, -2.7) {\tiny $\b{\b1}$};
\node[left] at (0.6, -2.7) {\tiny $\b{\b1}$};
\node[vertex] (G---2) at (1.5, -0.5) [shape = circle, draw, fill] {};
\node[vertex] (G---1) at (0.0, -0.5) [shape = circle, draw, fill] {};
\node[left] at (2.1, -1.1) {\tiny $\o1$};
\node[left] at (0.6, -1.1) {\tiny $\o1$};
\draw (G-2) .. controls +(0, 0)  .. (G---1);
\draw (G--1) .. controls +(0, 0)  .. (G---2);
\end{tikzpicture}
&
\begin{tikzpicture}[scale = 0.35,thick, baseline={(0,-1ex/2)}]
\tikzstyle{vertex} = [shape = circle, minimum size = 4pt, inner sep = 1pt]
\node[vertex] (G--2) at (1.5, -2) [shape = circle, draw, fill] {};
\node[vertex] (G--1) at (0.0, -2) [shape = circle, draw, fill] {};
\node[vertex] (G-1) at (0.0, 1) [shape = circle, draw,fill] {};
\node[vertex] (G-2) at (1.5, 1) [shape = circle, draw, fill] {};
\node[left] at (2.1, 1.6) {\tiny $1$};
\node[left] at (0.6, 1.6) {\tiny $1$};
\node[left] at (2.1, -2.7) {\tiny $\b{\b1}$};
\node[left] at (0.6, -2.7) {\tiny $\b{\b1}$};
\node[vertex] (G---2) at (1.5, -0.5) [shape = circle, draw, fill] {};
\node[vertex] (G---1) at (0.0, -0.5) [shape = circle, draw, fill] {};
\node[left] at (2.1, -1.1) {\tiny $\o1$};
\node[left] at (0.6, -1.1) {\tiny $\o1$};
\draw (G-2) .. controls +(0, 0)  .. (G---1);
\draw (G--1) .. controls +(0, 0)  .. (G---2);
\draw (G-1) .. controls +(0, 0)  .. (G--2);
\end{tikzpicture}
\\\\
$\nu|_{[k] \cup [\b{\b{k}}]}$ and $[\b{\b{k}}] \rightarrow [\o{k}]$&
\begin{tikzpicture}[scale = 0.35,thick, baseline={(0,-1ex/2)}]
\tikzstyle{vertex} = [shape = circle, minimum size = 4pt, inner sep = 1pt]
\node[vertex] (G--2) at (1.5, -1) [shape = circle, draw, fill] {};
\node[vertex] (G--1) at (0.0, -1) [shape = circle, draw, fill] {};
\node[vertex] (G-1) at (0.0, 1) [shape = circle, draw,fill] {};
\node[vertex] (G-2) at (1.5, 1) [shape = circle, draw, fill] {};
\node[left] at (2.1, 1.6) {\tiny $1$};
\node[left] at (0.6, 1.6) {\tiny $1$};
\node[left] at (2.1, -1.6) {\tiny $\o1$};
\node[left] at (0.6, -1.6) {\tiny $\o1$};
\draw (G-2) .. controls +(0, 0) .. (G--2);
\end{tikzpicture}
&
\begin{tikzpicture}[scale = 0.35,thick, baseline={(0,-1ex/2)}]
\tikzstyle{vertex} = [shape = circle, minimum size = 4pt, inner sep = 1pt]
\node[vertex] (G--2) at (1.5, -1) [shape = circle, draw, fill] {};
\node[vertex] (G--1) at (0.0, -1) [shape = circle, draw, fill] {};
\node[vertex] (G-1) at (0.0, 1) [shape = circle, draw,fill] {};
\node[vertex] (G-2) at (1.5, 1) [shape = circle, draw, fill] {};
\node[left] at (2.1, 1.6) {\tiny $1$};
\node[left] at (0.6, 1.6) {\tiny $1$};
\node[left] at (2.1, -1.6) {\tiny $\o1$};
\node[left] at (0.6, -1.6) {\tiny $\o1$};
\draw (G-1) .. controls +(0, 0) .. (G--1);
\draw (G-2) .. controls +(0, 0) .. (G--2);
\end{tikzpicture}
&
\begin{tikzpicture}[scale = 0.35,thick, baseline={(0,-1ex/2)}]
\tikzstyle{vertex} = [shape = circle, minimum size = 4pt, inner sep = 1pt]
\node[vertex] (G--2) at (1.5, -1) [shape = circle, draw, fill] {};
\node[vertex] (G--1) at (0.0, -1) [shape = circle, draw, fill] {};
\node[vertex] (G-1) at (0.0, 1) [shape = circle, draw,fill] {};
\node[vertex] (G-2) at (1.5, 1) [shape = circle, draw, fill] {};
\node[left] at (2.1, 1.6) {\tiny $1$};
\node[left] at (0.6, 1.6) {\tiny $1$};
\node[left] at (2.1, -1.6) {\tiny $\o1$};
\node[left] at (0.6, -1.6) {\tiny $\o1$};
\end{tikzpicture}
&
\begin{tikzpicture}[scale = 0.35,thick, baseline={(0,-1ex/2)}]
\tikzstyle{vertex} = [shape = circle, minimum size = 4pt, inner sep = 1pt]
\node[vertex] (G--2) at (1.5, -1) [shape = circle, draw, fill] {};
\node[vertex] (G--1) at (0.0, -1) [shape = circle, draw, fill] {};
\node[vertex] (G-1) at (0.0, 1) [shape = circle, draw,fill] {};
\node[vertex] (G-2) at (1.5, 1) [shape = circle, draw, fill] {};
\node[left] at (2.1, 1.6) {\tiny $1$};
\node[left] at (0.6, 1.6) {\tiny $1$};
\node[left] at (2.1, -1.6) {\tiny $\o1$};
\node[left] at (0.6, -1.6) {\tiny $\o1$};
\draw (G-2) .. controls +(0, 0) .. (G--2);
\end{tikzpicture}\\\\
coefficient $a_\nu b_\nu(x)$&$x-3$&$2(x-2)$&$4$&$1$
\end{tabular}
\end{center}
Therefore
$$X_\pi^2 = (x-2) X_{\dcl \dcl 1 \dcr, \dcl \o{1} \dcr, \dcl 1, \o{1} \dcr \dcr}
+ 2(x-2) X_{\dcl \dcl 1, \o{1} \dcr, \dcl 1, \o{1} \dcr \dcr}
+ 4 X_{\dcl \dcl 1 \dcr, \dcl 1\dcr, \dcl \o{1} \dcr, \dcl \o{1} \dcr \dcr}~.$$
\end{example}

\begin{example}\label{example:prod}  Let $r=4$, $k=2$ and consider the
multiset partitions $\pi= \dcl\dcl1,1\dcr,$ $\dcl\o1,\o2\dcr,$ $\dcl1,2,\o1,\o2\dcr\dcr$,
and $\gamma = \dcl\dcl1,2\dcr,$ $\dcl\o1,\o1\dcr,$ $\dcl1,2,\o1,\o1\dcr\dcr$.
There are four multiset partitions $\nu$ with entries in $\{1,2,\o1,\o2,\b{\b1},\b{\b2}\}$
which contribute to the terms in the product $X_\pi X_\gamma$.

\begin{center}
\begin{tabular}{c|cccc}
three row msp $\nu$&
\begin{tikzpicture}[scale = 0.35,thick, baseline={(0,-1ex/2)}]
\tikzstyle{vertex} = [shape = circle, minimum size = 4pt, inner sep = 1pt]
\node[vertex] (G--4) at (4.5, -2) [shape = circle, draw, fill] {};
\node[vertex] (G--3) at (3.0, -2) [shape = circle, draw, fill] {};
\node[vertex] (G-3) at (3.0, 1) [shape = circle, draw, fill] {};
\node[vertex] (G-4) at (4.5, 1) [shape = circle, draw, fill] {};
\node[vertex] (G--2) at (1.5, -2) [shape = circle, draw, fill] {};
\node[vertex] (G--1) at (0.0, -2) [shape = circle, draw, fill] {};
\node[vertex] (G-1) at (0.0, 1) [shape = circle, draw,fill] {};
\node[vertex] (G-2) at (1.5, 1) [shape = circle, draw, fill] {};
\node[left] at (5.1, 1.6) {\tiny $2$};
\node[left] at (3.6, 1.6) {\tiny $1$};
\node[left] at (2.1, 1.6) {\tiny $1$};
\node[left] at (0.6, 1.6) {\tiny $1$};
\node[left] at (5.1, -2.7) {\tiny $\b{\b1}$};
\node[left] at (3.6, -2.7) {\tiny $\b{\b1}$};
\node[left] at (2.1, -2.7) {\tiny $\b{\b1}$};
\node[left] at (0.6, -2.7) {\tiny $\b{\b1}$};
\draw (G-3) .. controls +(0.5, -0.5) and +(-0.5, -0.5) .. (G-4);
\draw (G-4) .. controls +(0, -1) and +(0, 1) .. (G--4);
\draw (G--4) .. controls +(-0.5, 0.5) and +(0.5, 0.5) .. (G--3);
\draw (G--2) .. controls +(-0.5, 0.5) and +(0.5, 0.5) .. (G--1);
\draw (G-1) .. controls +(0.5, -0.5) and +(-0.5, -0.5) .. (G-2);
\node[vertex] (G---4) at (4.5, -0.5) [shape = circle, draw, fill] {};
\node[vertex] (G---3) at (3.0, -0.5) [shape = circle, draw, fill] {};
\node[vertex] (G---2) at (1.5, -0.5) [shape = circle, draw, fill] {};
\node[vertex] (G---1) at (0.0, -0.5) [shape = circle, draw, fill] {};
\node[left] at (4.8, -1.1) {\tiny $\o2$};
\node[left] at (3.6, -1.1) {\tiny $\o1$};
\node[left] at (2.1, -1.1) {\tiny $\o2$};
\node[left] at (0.6, -1.1) {\tiny $\o1$};
\draw (G---4) .. controls +(-.5, .5)  and +(.5, -.5) .. (G---3);
\draw (G---2) .. controls +(-.5, .5)  and +(.5, -.5) .. (G---1);
\end{tikzpicture}
&
\begin{tikzpicture}[scale = 0.35,thick, baseline={(0,-1ex/2)}]
\tikzstyle{vertex} = [shape = circle, minimum size = 4pt, inner sep = 1pt]
\node[vertex] (G--4) at (4.5, -2) [shape = circle, draw, fill] {};
\node[vertex] (G--3) at (3.0, -2) [shape = circle, draw, fill] {};
\node[vertex] (G-3) at (3.0, 1) [shape = circle, draw, fill] {};
\node[vertex] (G-4) at (4.5, 1) [shape = circle, draw, fill] {};
\node[vertex] (G--2) at (1.5, -2) [shape = circle, draw, fill] {};
\node[vertex] (G--1) at (0.0, -2) [shape = circle, draw, fill] {};
\node[vertex] (G-1) at (0.0, 1) [shape = circle, draw,fill] {};
\node[vertex] (G-2) at (1.5, 1) [shape = circle, draw, fill] {};
\node[left] at (5.1, 1.6) {\tiny $2$};
\node[left] at (3.6, 1.6) {\tiny $1$};
\node[left] at (2.1, 1.6) {\tiny $1$};
\node[left] at (0.6, 1.6) {\tiny $1$};
\node[left] at (5.1, -2.7) {\tiny $\b{\b1}$};
\node[left] at (3.6, -2.7) {\tiny $\b{\b1}$};
\node[left] at (2.1, -2.7) {\tiny $\b{\b1}$};
\node[left] at (0.6, -2.7) {\tiny $\b{\b1}$};
\draw (G-3) .. controls +(0.5, -0.5) and +(-0.5, -0.5) .. (G-4);
\draw (G-4) .. controls +(0, -1) and +(0, 1) .. (G--4);
\draw (G--4) .. controls +(-0.5, 0.5) and +(0.5, 0.5) .. (G--3);
\draw (G--2) .. controls +(-0.5, 0.5) and +(0.5, 0.5) .. (G--1);
\draw (G-1) .. controls +(0.5, -0.5) and +(-0.5, -0.5) .. (G-2);
\node[vertex] (G---4) at (4.5, -0.5) [shape = circle, draw, fill] {};
\node[vertex] (G---3) at (3.0, -0.5) [shape = circle, draw, fill] {};
\node[vertex] (G---2) at (1.5, -0.5) [shape = circle, draw, fill] {};
\node[vertex] (G---1) at (0.0, -0.5) [shape = circle, draw, fill] {};
\node[left] at (4.8, -1.1) {\tiny $\o2$};
\node[left] at (3.6, -1.1) {\tiny $\o1$};
\node[left] at (2.1, -1.1) {\tiny $\o2$};
\node[left] at (0.6, -1.1) {\tiny $\o1$};
\draw (G---4) .. controls +(-.5, .5)  and +(.5, -.5) .. (G---3);
\draw (G---2) .. controls +(-.5, .5)  and +(.5, -.5) .. (G---1);
\draw (G-2) .. controls +(0.5, -0.5) and +(0.5, 0.5) .. (G--2);
\end{tikzpicture}
&
\begin{tikzpicture}[scale = 0.35,thick, baseline={(0,-1ex/2)}]
\tikzstyle{vertex} = [shape = circle, minimum size = 4pt, inner sep = 1pt]
\node[vertex] (G--4) at (4.5, -2) [shape = circle, draw, fill] {};
\node[vertex] (G--3) at (3.0, -2) [shape = circle, draw, fill] {};
\node[vertex] (G-3) at (3.0, 1) [shape = circle, draw, fill] {};
\node[vertex] (G-4) at (4.5, 1) [shape = circle, draw, fill] {};
\node[vertex] (G--2) at (1.5, -2) [shape = circle, draw, fill] {};
\node[vertex] (G--1) at (0.0, -2) [shape = circle, draw, fill] {};
\node[vertex] (G-1) at (0.0, 1) [shape = circle, draw,fill] {};
\node[vertex] (G-2) at (1.5, 1) [shape = circle, draw, fill] {};
\node[left] at (5.1, 1.6) {\tiny $2$};
\node[left] at (3.6, 1.6) {\tiny $1$};
\node[left] at (2.1, 1.6) {\tiny $1$};
\node[left] at (0.6, 1.6) {\tiny $1$};
\node[left] at (5.1, -2.7) {\tiny $\b{\b1}$};
\node[left] at (3.6, -2.7) {\tiny $\b{\b1}$};
\node[left] at (2.1, -2.7) {\tiny $\b{\b1}$};
\node[left] at (0.6, -2.7) {\tiny $\b{\b1}$};
\draw (G-3) .. controls +(0.5, -0.5) and +(-0.5, -0.5) .. (G-4);
\draw (G-4) .. controls +(0, -1) and +(0, 1) .. (G---4);
\draw (G--4) .. controls +(-0.5, 0.5) and +(0.5, 0.5) .. (G--3);
\draw (G--2) .. controls +(-0.5, 0.5) and +(0.5, 0.5) .. (G--1);
\draw (G-1) .. controls +(0.5, -0.5) and +(-0.5, -0.5) .. (G-2);
\node[vertex] (G---4) at (4.5, -0.5) [shape = circle, draw, fill] {};
\node[vertex] (G---3) at (3.0, -0.5) [shape = circle, draw, fill] {};
\node[vertex] (G---2) at (1.5, -0.5) [shape = circle, draw, fill] {};
\node[vertex] (G---1) at (0.0, -0.5) [shape = circle, draw, fill] {};
\node[left] at (5.1, -1.1) {\tiny $\o2$};
\node[left] at (3.6, -1.1) {\tiny $\o1$};
\node[left] at (1.8, -1.1) {\tiny $\o2$};
\node[left] at (0.6, -1.1) {\tiny $\o1$};
\draw (G---4) .. controls +(-.5, .5)  and +(.5, -.5) .. (G---3);
\draw (G---2) .. controls +(-.5, .5)  and +(.5, -.5) .. (G---1);
\draw (G--2) .. controls +(0, 0) .. (G---2);
\draw (G-2) .. controls +(0, 0) .. (G--3);
\end{tikzpicture}
&
\begin{tikzpicture}[scale = 0.35,thick, baseline={(0,-1ex/2)}]
\tikzstyle{vertex} = [shape = circle, minimum size = 4pt, inner sep = 1pt]
\node[vertex] (G--4) at (4.5, -2) [shape = circle, draw, fill] {};
\node[vertex] (G--3) at (3.0, -2) [shape = circle, draw, fill] {};
\node[vertex] (G-3) at (3.0, 1) [shape = circle, draw, fill] {};
\node[vertex] (G-4) at (4.5, 1) [shape = circle, draw, fill] {};
\node[vertex] (G--2) at (1.5, -2) [shape = circle, draw, fill] {};
\node[vertex] (G--1) at (0.0, -2) [shape = circle, draw, fill] {};
\node[vertex] (G-1) at (0.0, 1) [shape = circle, draw,fill] {};
\node[vertex] (G-2) at (1.5, 1) [shape = circle, draw, fill] {};
\node[vertex] (G---4) at (4.5, -0.5) [shape = circle, draw, fill] {};
\node[vertex] (G---3) at (3.0, -0.5) [shape = circle, draw, fill] {};
\node[vertex] (G---2) at (1.5, -0.5) [shape = circle, draw, fill] {};
\node[vertex] (G---1) at (0.0, -0.5) [shape = circle, draw, fill] {};
\node[left] at (5.1, 1.6) {\tiny $2$};
\node[left] at (3.6, 1.6) {\tiny $1$};
\node[left] at (2.1, 1.6) {\tiny $1$};
\node[left] at (0.6, 1.6) {\tiny $1$};
\node[left] at (5.1, -2.7) {\tiny $\b{\b1}$};
\node[left] at (3.6, -2.7) {\tiny $\b{\b1}$};
\node[left] at (2.1, -2.7) {\tiny $\b{\b1}$};
\node[left] at (0.6, -2.7) {\tiny $\b{\b1}$};
\draw (G-3) .. controls +(0.5, -0.5) and +(-0.5, -0.5) .. (G-4);
\draw (G-4) .. controls +(0, -1) and +(0, 1) .. (G---4);
\draw (G--4) .. controls +(-0.5, 0.5) and +(0.5, 0.5) .. (G--3);
\draw (G--2) .. controls +(-0.5, 0.5) and +(0.5, 0.5) .. (G--1);
\draw (G-1) .. controls +(0.5, -0.5) and +(-0.5, -0.5) .. (G-2);
\node[left] at (5.1, -1.1) {\tiny $\o2$};
\node[left] at (3.6, -1.1) {\tiny $\o1$};
\node[left] at (1.8, -1.1) {\tiny $\o2$};
\node[left] at (0.6, -1.1) {\tiny $\o1$};
\draw (G---4) .. controls +(-.5, .5)  and +(.5, -.5) .. (G---3);
\draw (G---2) .. controls +(-.5, .5)  and +(.5, -.5) .. (G---1);
\draw (G--2) .. controls +(0, 0) .. (G---2);
\end{tikzpicture}
\\\\
$\nu|_{[k] \cup [\b{\b{k}}]}$ and $[\b{\b{k}}] \rightarrow [\o{k}]$&
\begin{tikzpicture}[scale = 0.35,thick, baseline={(0,-1ex/2)}]
\tikzstyle{vertex} = [shape = circle, minimum size = 4pt, inner sep = 1pt]
\node[vertex] (G--4) at (4.5, -1) [shape = circle, draw, fill] {};
\node[vertex] (G--3) at (3.0, -1) [shape = circle, draw, fill] {};
\node[vertex] (G-3) at (3.0, 1) [shape = circle, draw, fill] {};
\node[vertex] (G-4) at (4.5, 1) [shape = circle, draw, fill] {};
\node[vertex] (G--2) at (1.5, -1) [shape = circle, draw, fill] {};
\node[vertex] (G--1) at (0.0, -1) [shape = circle, draw, fill] {};
\node[vertex] (G-1) at (0.0, 1) [shape = circle, draw,fill] {};
\node[vertex] (G-2) at (1.5, 1) [shape = circle, draw, fill] {};
\node[left] at (5.1, 1.6) {\tiny $2$};
\node[left] at (3.6, 1.6) {\tiny $1$};
\node[left] at (2.1, 1.6) {\tiny $1$};
\node[left] at (0.6, 1.6) {\tiny $1$};
\node[left] at (5.1, -1.6) {\tiny $\o1$};
\node[left] at (3.6, -1.6) {\tiny $\o1$};
\node[left] at (2.1, -1.6) {\tiny $\o1$};
\node[left] at (0.6, -1.6) {\tiny $\o1$};
\draw (G-3) .. controls +(0.5, -0.5) and +(-0.5, -0.5) .. (G-4);
\draw (G-4) .. controls +(0, -1) and +(0, 1) .. (G--4);
\draw (G--4) .. controls +(-0.5, 0.5) and +(0.5, 0.5) .. (G--3);
\draw (G--2) .. controls +(-0.5, 0.5) and +(0.5, 0.5) .. (G--1);
\draw (G-1) .. controls +(0.5, -0.5) and +(-0.5, -0.5) .. (G-2);
\end{tikzpicture}
& 
\begin{tikzpicture}[scale = 0.35,thick, baseline={(0,-1ex/2)}]
\tikzstyle{vertex} = [shape = circle, minimum size = 4pt, inner sep = 1pt]
\node[vertex] (G--4) at (4.5, -1) [shape = circle, draw,fill] {};
\node[vertex] (G--3) at (3.0, -1) [shape = circle, draw, fill] {};
\node[vertex] (G-3) at (3.0, 1) [shape = circle, draw, fill] {};
\node[vertex] (G-4) at (4.5, 1) [shape = circle, draw, fill] {};
\node[vertex] (G--2) at (1.5, -1) [shape = circle, draw, fill] {};
\node[vertex] (G--1) at (0.0, -1) [shape = circle, draw, fill] {};
\node[vertex] (G-1) at (0.0, 1) [shape = circle, draw, fill] {};
\node[vertex] (G-2) at (1.5, 1) [shape = circle, draw,fill] {};
\node[left] at (5.1, 1.6) {\tiny $2$};
\node[left] at (3.6, 1.6) {\tiny $1$};
\node[left] at (2.1, 1.6) {\tiny $1$};
\node[left] at (0.6, 1.6) {\tiny $1$};
\node[left] at (5.1, -1.6) {\tiny $\o1$};
\node[left] at (3.6, -1.6) {\tiny $\o1$};
\node[left] at (2.1, -1.6) {\tiny $\o1$};
\node[left] at (0.6, -1.6) {\tiny $\o1$};
\draw (G-3) .. controls +(0.5, -0.5) and +(-0.5, -0.5) .. (G-4);
\draw (G-4) .. controls +(0, -1) and +(0, 1) .. (G--4);
\draw (G--4) .. controls +(-0.5, 0.5) and +(0.5, 0.5) .. (G--3);
\draw (G--2) .. controls +(0, 1) and +(0, -1) .. (G-2);
\draw (G--2) .. controls +(-0.5, 0.5) and +(0.5, 0.5) .. (G--1);
\draw (G-1) .. controls +(0.5, -0.5) and +(-0.5, -0.5) .. (G-2);
\end{tikzpicture}
&
\begin{tikzpicture}[scale = 0.35,thick, baseline={(0,-1ex/2)}]
\tikzstyle{vertex} = [shape = circle, minimum size = 4pt, inner sep = 1pt]
\node[vertex] (G--4) at (4.5, -1) [shape = circle, draw,fill] {};
\node[vertex] (G--3) at (3.0, -1) [shape = circle, draw, fill] {};
\node[vertex] (G-3) at (3.0, 1) [shape = circle, draw, fill] {};
\node[vertex] (G-4) at (4.5, 1) [shape = circle, draw, fill] {};
\node[vertex] (G--2) at (1.5, -1) [shape = circle, draw,fill] {};
\node[vertex] (G--1) at (0.0, -1) [shape = circle, draw,fill] {};
\node[vertex] (G-1) at (0.0, 1) [shape = circle, draw,fill] {};
\node[vertex] (G-2) at (1.5, 1) [shape = circle, draw,fill] {};
\node[left] at (5.1, 1.6) {\tiny $2$};
\node[left] at (3.6, 1.6) {\tiny $1$};
\node[left] at (2.1, 1.6) {\tiny $1$};
\node[left] at (0.6, 1.6) {\tiny $1$};
\node[left] at (5.1, -1.6) {\tiny $\o1$};
\node[left] at (3.6, -1.6) {\tiny $\o1$};
\node[left] at (2.1, -1.6) {\tiny $\o1$};
\node[left] at (0.6, -1.6) {\tiny $\o1$};
\draw (G-3) .. controls +(0.5, -0.5) and +(-0.5, -0.5) .. (G-4);
\draw (G--4) .. controls +(-0.5, 0.5) and +(0.5, 0.5) .. (G--3);
\draw (G--2) .. controls +(0, 1) and +(0, -1) .. (G-2);
\draw (G--2) .. controls +(-0.5, 0.5) and +(0.5, 0.5) .. (G--1);
\draw (G-1) .. controls +(0.5, -0.5) and +(-0.5, -0.5) .. (G-2);
\end{tikzpicture}
& 
\begin{tikzpicture}[scale = 0.35,thick, baseline={(0,-1ex/2)}]
\tikzstyle{vertex} = [shape = circle, minimum size = 4pt, inner sep = 1pt]
\node[vertex] (G--4) at (4.5, -1) [shape = circle, draw,fill] {};
\node[vertex] (G--3) at (3.0, -1) [shape = circle, draw,fill] {};
\node[vertex] (G-3) at (3.0, 1) [shape = circle, draw,fill] {};
\node[vertex] (G-4) at (4.5, 1) [shape = circle, draw, fill] {};
\node[vertex] (G--2) at (1.5, -1) [shape = circle, draw,fill] {};
\node[vertex] (G--1) at (0.0, -1) [shape = circle, draw,fill] {};
\node[vertex] (G-1) at (0.0, 1) [shape = circle, draw, fill] {};
\node[vertex] (G-2) at (1.5, 1) [shape = circle, draw,fill] {};
\node[left] at (5.1, 1.6) {\tiny $2$};
\node[left] at (3.6, 1.6) {\tiny $1$};
\node[left] at (2.1, 1.6) {\tiny $1$};
\node[left] at (0.6, 1.6) {\tiny $1$};
\node[left] at (5.1, -1.6) {\tiny $\o1$};
\node[left] at (3.6, -1.6) {\tiny $\o1$};
\node[left] at (2.1, -1.6) {\tiny $\o1$};
\node[left] at (0.6, -1.6) {\tiny $\o1$};
\draw (G-3) .. controls +(0.5, -0.5) and +(-0.5, -0.5) .. (G-4);
\draw (G--4) .. controls +(-0.5, 0.5) and +(0.5, 0.5) .. (G--3);
\draw (G--2) .. controls +(-0.5, 0.5) and +(0.5, 0.5) .. (G--1);
\draw (G-1) .. controls +(0.5, -0.5) and +(-0.5, -0.5) .. (G-2);
\end{tikzpicture}
\\\\
coefficient $a_\nu b_\nu(x)$&
$(x-3)$&$(x-2)$&$1$&$2$
\end{tabular}
\end{center}
Therefore
\begin{align*}
X_\pi X_\gamma = & (x-3) X_{\dcl \dcl 1,1 \dcr, \dcl \o{1}, \o{1} \dcr, \dcl 1, 2, \o{1}, \o{1} \dcr \dcr}
+ (x-2) X_{\dcl \dcl 1, 1, \o{1} ,\o{1}\dcr, \dcl 1, 2, \o{1}, \o{1}\dcr \dcr}
 +  X_{\dcl \dcl 1, 1, \o{1}, \o{1} \dcr, \dcl 1, 2\dcr, \dcl \o{1}, \o{1} \dcr \dcr} \\
&+  2 X_{\dcl \dcl 1, 1\dcr ,\dcl  \o{1}, \o{1} \dcr, \dcl 1, 2\dcr,  \dcl \o{1}, \o{1} \dcr \dcr}~.
\end{align*}
\end{example}
Observe that by definition this product is a well-defined algebra product.  In addition, for any $\pi, \gamma, \tau\in \Pi_{r,k}$
the coefficient of $X_\tau$ in a product $X_\pi X_\gamma$ is always a polynomial in the variable $x$.
\begin{prop} \label{prop:associative}
The product defined in Equation \eqref{eq:genericproduct}
is associative.
\end{prop}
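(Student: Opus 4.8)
The plan is to combine a polynomiality observation with a manifestly associative ``colored'' diagram calculus, thereby reducing an identity of polynomials to an identity that is geometrically obvious. As already noted, for any $\pi,\gamma,\tau \in \Pi_{r,k}$ the coefficient of $X_\tau$ in $X_\pi X_\gamma$ is a polynomial in $x$; consequently, for fixed $\pi,\gamma,\delta,\tau \in \Pi_{r,k}$ the coefficient of $X_\tau$ in $(X_\pi X_\gamma)X_\delta$ and in $X_\pi(X_\gamma X_\delta)$ is in each case a polynomial in $x$. To prove these two polynomials agree it is enough to show they agree at all integers $x=n$ with $n \ge 2r$, since that is infinitely many values. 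For such $n$ every $\pi \in \Pi_{r,k}$ has $\ell(\pi) \le 2r \le n$, so $\Pi_{r,k,n} = \Pi_{r,k}$ and each $\pi$ admits at least one coloring $\pi^\bc$ with colors in $[n]$.

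First I would introduce the composition of colored diagrams and check that it is associative by inspection. Because the colors attached to the blocks of $\pi^\bc$ are distinct, a colored multiset partition is genuinely a \emph{set} partition of its colored cells, so stacking two colored diagrams and tracing connected components through the shared middle row is exactly the usual associative composition in a diagram category: given colored diagrams on rows $([k],[\o k])$ and $([\o k],[\b{\b k}])$ whose middle colored multiset partitions coincide, one forms the three--row colored object, reads off the colored multiset partition on $([k],[\b{\b k}])$, and discards the blocks lying entirely in $[\o k]$; the composite of two colored diagrams is then a single colored diagram, or zero when the middle rows fail to match. No loop parameter is needed here because interior components are simply deleted, the dependence on $x$ entering only when one later sums over colorings. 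Associativity of this stacking over four rows is immediate, since merging connected components and deleting interior ones does not depend on the order in which the gluings are performed.

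The bridge back to $\MP_{r,k}(n)$ is the linear map $\Phi_n(X_\pi) = \sum_{\pi^\bc \to \pi} \pi^\bc$, summing over all colorings of $\pi$ with colors in $[n]$. The heart of the argument — and the step I expect to be the main obstacle — is to verify that $\Phi_n$ is multiplicative, that is, $\Phi_n(X_\pi)\,\Phi_n(X_\gamma) = \Phi_n(X_\pi X_\gamma)$ when the right-hand product is evaluated at $x=n$. Expanding the left side sums over pairs of colorings $(\pi^{\mathbf a},\gamma^{\mathbf b})$ whose middle rows agree as colored multiset partitions; collecting these pairs according to the underlying three--row multiset partition $\nu \in \Gamma_{r,k}$ (with $\nu|_{[k]\cup[\o k]}=\pi$ and $\nu|_{[\o k]\cup[\b{\b k}]}=\o\gamma$) and according to the resulting colored output, one must show the number of pairs producing a fixed output through a fixed $\nu$ is exactly $a_\nu\, b_\nu(n)$. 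Here $a_\nu = \prod_S \ell(\nu_S)!/m(\nu_S)!$ should count the ways of matching the two colorings across the blocks sharing a common top--bottom shadow $S$, the symmetry factors $m(\nu_S)!$ removing the overcounting among equal blocks; while $b_\nu(n) = (n-\ell(\nu|_{[k]\cup[\b{\b k}]}))_{\ell(\beta)}/m(\beta)!$ should count the assignments of fresh distinct colors from $[n]$ to the interior (barred--only) blocks $\beta$, the falling factorial giving the $\ell(\beta)$ colors avoiding the $\ell(\nu|_{[k]\cup[\b{\b k}]})$ output colors and the division by $m(\beta)!$ enforcing the increasing-color convention on equal interior blocks. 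Reconciling these symmetry factors with the last--letter and color orderings of Section \ref{ssec:coloredmsp}, so that \eqref{eq:defanu} and \eqref{eq:defbnu} reproduce the colored--composition counts exactly, is the only genuinely delicate combinatorial point.

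Granting multiplicativity of $\Phi_n$, the proof concludes quickly. The images $\{\Phi_n(X_\pi)\}_{\pi \in \Pi_{r,k}}$ are sums over disjoint sets of colored diagrams, since each colored diagram uncolors to a unique $\pi$, so $\Phi_n$ is injective for $n \ge 2r$. Associativity of colored composition gives $\Phi_n((X_\pi X_\gamma)X_\delta) = \Phi_n(X_\pi(X_\gamma X_\delta))$ at $x=n$, and injectivity then yields $(X_\pi X_\gamma)X_\delta = X_\pi(X_\gamma X_\delta)$ in $\MP_{r,k}(n)$ for every integer $n \ge 2r$. Since the relevant structure constants are polynomials in $x$ agreeing at infinitely many integers, they agree identically, proving associativity in $\MP_{r,k}(x)$ over $\CC(x)$. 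As an alternative avoiding specialization altogether, one could expand both triple products directly as sums over four--row multiset partitions in $[k]\cup[\o k]\cup[\b{\b k}]\cup[\b{\b{\b k}}]$ and match the two families of structure constants term by term; this is self--contained but considerably more computational, and the colored--diagram bookkeeping above is essentially a conceptual repackaging of that calculation.
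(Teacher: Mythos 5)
Your argument is correct and is essentially the paper's: both reduce associativity to a polynomial identity in $x$ checked at the infinitely many specializations $x=n\ge 2r$, where the product is transported along the coloring map $X_\pi\mapsto\sum_{\pi^{\bc}\to\pi}\pi^{\bc}$ to a manifestly associative composition. Your ``colored diagram calculus'' is exactly the matrix-unit calculus of $\End(\bP^r(V_{n,k}))$ from Section 4 (each colored diagram $\pi^{\bc}$ is the matrix unit $E_{\pi^{\bc}}$, so associativity of the stacking is just associativity of matrix multiplication), and the multiplicativity of your $\Phi_n$ that you flag as the delicate step is precisely the content of Theorem \ref{th:isomorphism}, which is what the paper's proof invokes.
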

\begin{proof}
Proving associativity of this product directly is difficult since we would need to
give an interpretation to a product of three basis elements. That is, we would have
to find a formula for the coefficients occurring in these triple products.   Thus, we show associativity
indirectly using the fact that the algebra $\MP_{r,k}(x)$ is isomorphic to an endomorphism
algebra $\mathcal{A}_{r,k}(n)$ (see Section \ref{sec:centralizer} for details), for specialized values of $x=n$ with $n$ and integer
such that $n\geq 2r$, see Theorem \ref{th:isomorphism}.   The algebra $\mathcal{A}_{r,k}(n)$ has a basis $\{O_\pi\, |\, \pi \in \Pi_{r,k}\}$ and
in Theorem \ref{th:isomorphism} we show that $X_\pi$ corresponds to $O_\pi$ under the isomorphism.

For any
$\pi, \gamma, \zeta, \tau \in \Pi_{r,k}$,
the coefficient of $X_\tau$ in $(X_\pi X_\gamma) X_\zeta-X_\pi(X_\gamma X_\zeta)$
is a polynomial in $x$.  For all $x = n$ where $n$ is an integer which is greater than or equal to
$2r$, by Theorem \ref{th:isomorphism} this polynomial evaluates to $0$ because
\begin{align*}
\hbox{the coefficient of }&X_\tau\hbox{ in }(X_\pi X_\gamma) X_\zeta-X_\pi(X_\gamma X_\zeta)\\
&= \hbox{the coefficient of }O_\tau\hbox{ in }(O_\pi O_\gamma) O_\zeta-O_\pi(O_\gamma O_\zeta)
\end{align*}
and because the product of the $O_\pi$ is associative in $\mathcal{A}_{r,k}(n)$.   Since this holds for an infinite
number of values if $x=n$, as a polynomial in $x$ the coefficient must be the $0$ polynomial.
\end{proof}

A multiset of $[k] \cup [\o{k}]$ is self-symmetric if replacing $i$ with $\o{i}$
and $\o{i}$ with $i$ is the same multiset.
A multiset partition is self-symmetric if each of the parts
are individually self-symmetric.  For example $\dcl \dcl 1,1,\o1,\o1 \dcr, \dcl 1,2,2,\o1,\o2,\o2\dcr,
\dcl 1, \o1 \dcr, \dcl 2,3, \o2, \o3\dcr\dcr$ is self-symmetric, but for instance,
$\dcl \dcl 1, 2, \o2 \dcr, \dcl 2, \o1, \o2 \dcr \dcr$ is not.

\begin{prop} For $r,k > 0$, the element $I_{r,k} = \sum_{\pi} X_\pi$ where the sum is over
all self-symmetric multiset partitions $\pi \in \Pi_{r,k}$
is the identity element of $\MP_{r,k}(x)$.
\end{prop}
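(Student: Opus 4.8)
The plan is to check directly that $I_{r,k}$ acts as a two-sided identity on the spanning set $\{X_\gamma : \gamma \in \Pi_{r,k}\}$, i.e.\ to prove $I_{r,k}\cdot X_\gamma = X_\gamma = X_\gamma\cdot I_{r,k}$ for every $\gamma$. I would treat the left identity in detail; the right identity is the mirror image, obtained by exchanging the roles of the top row $[k]$ and the bottom row $[\o k]$. Expanding by the definition of $I_{r,k}$ and the product \eqref{eq:genericproduct},
\[
I_{r,k}\cdot X_\gamma \;=\; \sum_{\pi}\, X_\pi X_\gamma \;=\; \sum_{\nu}\, a_\nu\, b_\nu(x)\, X_{\nu|_{[k]\cup[\b{\b k}]}},
\]
where $\pi$ runs over self-symmetric elements of $\Pi_{r,k}$, and the second sum runs over all $\nu\in\Gamma_{r,k}$ with $\nu|_{[\o k]\cup[\b{\b k}]}=\o\gamma$ and $\nu|_{[k]\cup[\o k]}$ self-symmetric.

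The crux is a rigidity statement about the middle row of $\nu$ (its $[\o k]$-vertices). Since deleting vertices from the blocks of a multiset partition neither merges two blocks nor separates vertices of a common block, the partition of the $[\o k]$-vertices induced by $\nu$ agrees with the one induced by $\nu|_{[k]\cup[\o k]}$ and with the one induced by $\nu|_{[\o k]\cup[\b{\b k}]}=\o\gamma$. For a self-symmetric $\pi=\nu|_{[k]\cup[\o k]}$ each block has the form $T\uplus\o T$, so this induced partition is exactly the underlying multiset partition of $\pi$, while from $\o\gamma$ it is forced to be $\gamma_{top}$. Hence only the unique self-symmetric $\pi^\ast$ whose blocks are $T\uplus\o T$ with $T$ ranging over $\gamma_{top}$ can contribute; every other self-symmetric $\pi$ gives $X_\pi X_\gamma=0$.

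With $\pi=\pi^\ast$ fixed, I expect the remaining $\nu$ to be uniquely determined: each middle block $\o T$ is joined by $\pi^\ast$ to the top $T$ and by $\o\gamma$ to the doubly-barred bottom $\b{\b B}$ of the matching block $T\uplus B$ of $\gamma$, producing the block $T\uplus\o T\uplus\b{\b B}$, while blocks of $\gamma$ with empty top pass through as pure $[\b{\b k}]$-blocks; this single $\nu$ satisfies $\nu|_{[k]\cup[\b{\b k}]}=\gamma$. I would then verify that both coefficients collapse. Because every middle vertex is attached to a nonempty top, $\nu$ has no block contained entirely in $[\o k]$, so the multiset partition $\beta$ in \eqref{eq:defbnu} is empty and $b_\nu(x)=1$; and since all blocks of $\nu$ restricting to a given block $S=T\uplus B$ of $\gamma$ are equal to $T\uplus\o T\uplus\b{\b B}$, the sub-partition $\nu_S$ consists of $m_S(\gamma)$ copies of a single block, whence $\ell(\nu_S)!=m_S(\gamma)!=m(\nu_S)!$ and $a_\nu=\prod_S \ell(\nu_S)!/m(\nu_S)!=1$. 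Thus $I_{r,k}\cdot X_\gamma=X_\gamma$, and the symmetric computation (contributing term $\rho^\ast$ with underlying partition $\gamma_{bot}$) gives $X_\gamma\cdot I_{r,k}=X_\gamma$.

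The step I expect to be the main obstacle is the middle-row rigidity of the second paragraph, together with the careful bookkeeping of blocks of $\gamma$ having empty top or empty bottom: it is precisely the fact that a self-symmetric $\pi$ connects every middle vertex to the top (so that no pure-middle block survives in $\nu$) that forces the $x$-dependent factor $b_\nu(x)$ down to $1$ and makes the sum collapse to a single term. If a direct argument proves unwieldy, an alternative is the device used in Proposition \ref{prop:associative}: show that under the isomorphism of Theorem \ref{th:isomorphism} the element $\sum_{\pi}O_\pi$, summed over self-symmetric $\pi$, is the identity endomorphism of $\bP^r(V_{n,k})$ for every integer $n\ge 2r$, and then conclude by the polynomiality of the structure constants in $x$.
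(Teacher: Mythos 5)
Your argument is correct and follows essentially the same route as the paper: the only self-symmetric $\pi$ that can contribute is the unique one with $\pi|_{[\o k]} = \o{\gamma}|_{[\o k]}$, the corresponding $\nu \in \Gamma_{r,k}$ is then unique, and self-symmetry forces $\beta = \emptyset$ and each $\nu_S$ to consist of copies of a single block, so $a_\nu b_\nu(x) = 1$. The one small divergence is at the end: where you redo the mirror computation for $X_\gamma \cdot I_{r,k}$, the paper instead observes that $X_\pi \mapsto X_{\o{\pi}}$ is an algebra antihomomorphism, so a right identity exists and must coincide with the left one.
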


\begin{proof}
Fix a $\gamma \in \Pi_{r,k}$ and to compute $I_{r,k} X_\gamma$ we know that
for any self symmetric $\pi$, $X_\pi X_\gamma = 0$
unless $\pi|_{[\o{k}]} = \o{\gamma} |_{[\o{k}]}$.
Therefore $I_{r,k} X_\gamma = X_\pi X_\gamma$ where $\pi$ is the
self-symmetric element of $\Pi_{r,k}$ such that $\pi|_{[\o{k}]} = \o{\gamma} |_{[\o{k}]}$.

The product $X_\pi X_\gamma$ is the sum over all $\nu$ such that
$\nu |_{[k] \cup [\o{k}]} = \pi$ and $\nu|_{[\o{k}] \cup [\b{\b{k}}]} = \o\gamma$.
Notice that if we unbar the multisets in $\nu|_{[\o{k}]}$, the result is the multiset
partition  $\nu|_{[k]}$ it follows that $\nu |_{[k] \cup [\b{\b{k}}]} = \gamma$ (if we remove one
bar from the double barred elements).  Thus, it follows that
$\nu = \dcl S \uplus \dcl i : \o{i} \in S \dcr : S \in \o{\gamma} \dcr$ is the single
$\nu \in \Gamma_{r,k}$ that satisfies this condition.

The multiset $\dcl S \in \nu : \forall i \in S, i \in [\o{k}] \dcr$
is empty since $\pi$ is self-symmetric so each multiset with an $\o{i} \in [\o{k}]$ also contains
the corresponding $i \in [k]$, therefore $b_\nu(x) = 1$ by Equation \eqref{eq:defbnu}.
For each multiset $S$ in $\nu|_{[k] \cup [\b{\b{k}}]}$, the multisets in $\nu_S$
will consist of only the multiset $S \uplus \dcl \o{i} : i \in S|_{[k]} \dcr$ (possibly with a multiplicity)
and hence $\ell(\nu_S)! = m(\nu_S)!$ and by Equation \eqref{eq:defanu}, $a_\nu = 1$. It follows
that $I_{r,k} X_\gamma = X_\pi X_\gamma = X_\gamma$.

Since the transformation which sends $X_\pi \mapsto X_{\o{\pi}}$ is
an algebra antihomomorphism, a right identity exists
and the right and left identity of an algebra must be the same.
\end{proof}

\section{ $\MP_{r,k}(n)$ is a centralizer algebra of $\CC S_n$} \label{sec:centralizer}
In this section we introduce a centralizer algebra of the
symmetric group.  Our objective is to show that this algebra is
isomorphic to  $\MP_{r,k}(n)$.

\subsection{An $S_n$ action on polynomials}
Let $V_{n,k} = (\mathbb{C}^n)^k$ denote the $nk$ dimensional vector space of
sequences $(v_1, v_2, \ldots, v_k)$ where $v_j \in \mathbb{C}^n$.
Notice that $V_{n,k}$ can be identified with the space of $n\times k$
matrices $M_{n,k}$, where each $v_j$ is a column vector.
With this identification, we can define $e_{ij}$ as
the $n\times k$ matrix with 1 in the $ij$-the entry and zeros
everywhere else, then $\{e_{ij}\, |\, 1\leq i\leq n, 1\leq j\leq k\}$ is a basis for $V_{n,k}$.

The entries $x_{ij}$ of a matrix $X$ in $M_{n,k}$ can be viewed as functions on $V_{n,k}$.
Then  $\bP(V_{n,k})$ is the polynomial ring in the commuting variables
\[ \{ x_{ij}\, |\,  1\leq i\leq n, 1\leq j\leq k\}~.\]
We will denote by $\bP^r(V_{n,k})$ the space of polynomials of degree $r$
(the symmetric tensor).
This space has as basis the monomials in the $x_{ij}$ of degree $r$, we order the monomials
in weakly decreasing order in such a way that $j_1\leq j_2\leq \ldots \leq j_r$ and $i_s\leq i_t$ if $j_s=j_t$.
Assuming this order, then
\[
\{  x_{i_1j_1} x_{i_2j_2}\cdots x_{i_rj_r}\, |\, 1\leq i_s\leq n \text{ and } 1\leq j_t\leq k \}
\]
is an ordered basis for $\bP^r(V_{n,k})$.  Thus, the dimension of $\bP^r(V_{n,k})$ is ${nk +r -1 \choose r}$.

Using the identification of $V_{n,k}$ with the $n\times k$ matrices, then $GL_n$ acts on $V_{n,k}$ via
left multiplication.  This action induces an action of $GL_n$ on $\bP(V_{n,k})$, for the details of this
action see \cite{GoodWall2}, Section 5.6.2.  Now the symmetric group, $S_n$, embeds in $GL_n$ as
permutation matrices and thus acts on $V_{n,k}$; for $\sigma \in S_n$ and $e_{ij}$ a basis element
of $V_{n,k}$, we have
\[ \sigma \cdot e_{i,j} = e_{\sigma(i),j}. \]
With the identification of $e_{ij}$ as a matrix unit, this action corresponds to permutation of rows of the
matrix or equivalently left multiplication by the permutation matrix $\sigma$.  Then by restriction,
$S_n$ acts on $\bP(V_{n,k})$, this action of $S_n$ has a
simple description on monomials.  For $ x_{i_1j_1} x_{i_2j_2}\cdots x_{i_rj_r}\in \bP^r(V_{n,k})$ and $\sigma \in S_n$,
we have
\begin{equation}\label{action}
\sigma \cdot  x_{i_1j_1} x_{i_2j_2}\cdots x_{i_rj_r} =  x_{\sigma^{-1}(i_1)j_1} x_{\sigma^{-1}(i_2)j_2}\cdots x_{\sigma^{-1}(i_r)j_r}~.
\end{equation}

\begin{remark}
This action decomposes $\bP^r(V_{n,k})$ into invariant subspaces indexed by multisets of size $r$ with elements from the set
$\{1, \dots, k\}$.
These multisets can be identified with a weakly increasing sequence $(j_1, j_2, \ldots, j_r)$
which correspond to the second index of the variables $x_{ij}$.
In particular, the subspace of monomials with $j_s = s$ can be identified with the tensor product
$(\mathbb{C}^n)^{\otimes r}$
and the action of $S_n$ is the diagonal action on tensor space.  Recall that the centralizer of the diagonal
action of $S_n$ on tensor space is isomorphic to the partition algebra when $n\geq 2r$ \cite{Jones}.
 \end{remark}

In Section 5, when we describe the branching rule and the restriction
to simplify the presentation we will use the
following notation.  For a subset $S \subseteq \{1,2,\ldots,k\}$, we will denote $V_{n,S} \subseteq V_{n,k}$
where $V_{n,S}$ is the span of elements $(v_1, v_2, \ldots, v_k)$ where $v_j = 0$ if
$j \notin S$.  That is, $V_{n,\{1,2,\ldots, k\}} = V_{n,k}$ and
$V_{n,S}$ is the span of the basis elements $\{ e_{ij} : 1 \leq i \leq n, j \in S\}$.

\subsection{The centralizer of the action}
In this section we are interested in the algebra, $\AA_{r,k}(n):=\End_{\CC S_n}(\bP^r(V_{n,k}))$,
of endomorphisms that commute with the action of $S_n$ on $\bP^r(V_{n,k})$, see Equation \eqref{action}.   That is,
\[ \AA_{r,k}(n) = \{ f: \bP^r(V_{n,k})\rightarrow \bP^r(V_{n,k})\, |\, f\sigma = \sigma f \text{ for all } \sigma \in S_n\}. \]
First we  want to describe a basis for this algebra.
In order to do this, we use the fact that an operator $f\in \End(\bP^r(V_{n,k}))$ is determined by a
 ${nk+r-1\choose r}\times {nk+r-1\choose r}$ matrix $A=(A_{(\mathbf{i}, \mathbf{j})}^{(\mathbf{i}', \mathbf{j}')})$,
 where $(\mathbf{i}, \mathbf{j})=((i_1, j_1), \dots, (i_r,j_r))$
satisfying  $i_s\in[n]$ and $j_s\in[k]$ ordered such that $j_1\leq j_2\leq \ldots \leq j_r$ and $i_s\leq i_t$ if $j_s=j_t$.
Similarly  $(\mathbf{i}',\mathbf{j}') = ((i_1', \o{j}_1), \ldots,  (i_r', \o{j}_r) )$,
where $i'_a\in [n]$ and $\overline{j}_b \in [\o k]$.
The reason we place an overline is to make sure that we emphasize
that the primed correspond to the column indices. The order for the column
indices is the same as for the unbarred.  Abusing notation,  we will use
$\mathbf{j}$ as both a sequence ordered in weakly increasing order and as a
multiset of the set $[k]$ with $r$ elements.   Similarly, $\mathbf{j}'$
denotes a multiset of $[\o k]$ and a weakly increasing sequence.

To simplify notation we set $x_{(\mathbf{i}, \mathbf{j})} = x_{i_1j_1} x_{i_2j_2}\cdots x_{i_rj_r}$, in this notation, $\sigma x_{(\mathbf{i}, \mathbf{j})} = x_{(\sigma^{-1}(\mathbf{i}), \mathbf{j})}$
where for $\sigma \in S_n$, $\sigma^{-1}(\mathbf{i}) = (\sigma^{-1}(i_1),\sigma^{-1}(i_1), \ldots, \sigma^{-1}(i_r))$.  Therefore, if $A$  is a matrix corresponding to an endomorphism in  $\End (\bP^r(V_{n,k}))$, then
\[ A x_{(\mathbf{i}, \mathbf{j})} = \sum_{(\mathbf{i}', \mathbf{j}')} A_{(\mathbf{i}, \mathbf{j})}^{(\mathbf{i}', \mathbf{j}')} x_{(\mathbf{i}', \mathbf{j}')}~. \]

\begin{lemma}\label{lem:elements} For $r,k,n>0$, the element
$A\in \AA_{r,k}(n)$  if and only if $A_{(\mathbf{i},\mathbf{j})}^{(\mathbf{i}', \mathbf{j}')} = A_{(\sigma(\mathbf{i}), \mathbf{j})}^{(\sigma(\mathbf{i}'), \mathbf{j}')}$
for all $\sigma\in S_n$ and ${(\mathbf{i}, \mathbf{j})}$ and  ${(\mathbf{i}', \mathbf{j}')}$ are as specified above.
\end{lemma}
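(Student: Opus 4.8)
The plan is to translate the operator identity defining $\AA_{r,k}(n)$ directly into a condition on matrix entries by evaluating on the monomial basis. Recall that $A \in \AA_{r,k}(n)$ precisely when $A\sigma = \sigma A$ (equivalently $f\sigma = \sigma f$) for every $\sigma \in S_n$, and since both $A\sigma$ and $\sigma A$ are linear it suffices to check this identity on each basis monomial $x_{(\mathbf{i},\mathbf{j})}$. The essential point to keep in mind throughout is that these monomials are indexed by \emph{multisets} of pairs $(i_s,j_s)$ with $i_s \in [n]$, $j_s \in [k]$ (the ordered sequences $(\mathbf{i},\mathbf{j})$ being the canonical weakly-increasing representatives), and that by \eqref{action} the group $S_n$ permutes only the first coordinates while fixing the multiset $\mathbf{j}$. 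Thus $\sigma$ acts on the index set by $(\mathbf{i},\mathbf{j}) \mapsto (\sigma^{-1}(\mathbf{i}),\mathbf{j})$ followed by re-sorting into canonical order, and this is a bijection of the basis that leaves the $[k]$-part $\mathbf{j}$ unchanged.

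First I would compute the two sides on $x_{(\mathbf{i},\mathbf{j})}$. Using $A x_{(\mathbf{i},\mathbf{j})} = \sum_{(\mathbf{i}',\mathbf{j}')} A_{(\mathbf{i},\mathbf{j})}^{(\mathbf{i}',\mathbf{j}')} x_{(\mathbf{i}',\mathbf{j}')}$ together with \eqref{action},
\begin{align*}
\sigma A\, x_{(\mathbf{i},\mathbf{j})} &= \sum_{(\mathbf{i}',\mathbf{j}')} A_{(\mathbf{i},\mathbf{j})}^{(\mathbf{i}',\mathbf{j}')}\, x_{(\sigma^{-1}(\mathbf{i}'),\mathbf{j}')},\\
A\sigma\, x_{(\mathbf{i},\mathbf{j})} &= A\, x_{(\sigma^{-1}(\mathbf{i}),\mathbf{j})} = \sum_{(\mathbf{i}',\mathbf{j}')} A_{(\sigma^{-1}(\mathbf{i}),\mathbf{j})}^{(\mathbf{i}',\mathbf{j}')}\, x_{(\mathbf{i}',\mathbf{j}')}.
\end{align*}
In the first line I would reindex the sum: as $(\mathbf{i}',\mathbf{j}')$ runs over canonical representatives, so does the canonical reordering of $(\sigma^{-1}(\mathbf{i}'),\mathbf{j}')$. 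Collecting the coefficient of a fixed basis monomial on each side and equating then gives $A_{(\mathbf{i},\mathbf{j})}^{(\sigma(\mathbf{i}'),\mathbf{j}')} = A_{(\sigma^{-1}(\mathbf{i}),\mathbf{j})}^{(\mathbf{i}',\mathbf{j}')}$ for all $\sigma$ and all admissible indices. Since $\sigma$ ranges over all of $S_n$, I would substitute $\mathbf{i} \mapsto \sigma(\mathbf{i})$ to rewrite this as $A_{(\sigma(\mathbf{i}),\mathbf{j})}^{(\sigma(\mathbf{i}'),\mathbf{j}')} = A_{(\mathbf{i},\mathbf{j})}^{(\mathbf{i}',\mathbf{j}')}$, which is exactly the asserted symmetry.

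Both implications fall out of this single computation, since every step is an equality that can be read in either direction: if the entry symmetry holds, then reversing the reindexing shows $\sigma A$ and $A\sigma$ agree on every basis monomial, hence $A\in\AA_{r,k}(n)$; conversely commutativity forces the symmetry as above. I expect the only delicate point to be the bookkeeping in the reindexing step — namely checking that applying $\sigma^{-1}$ to the first coordinates and re-sorting the pairs $(i_s,j_s)$ back into canonical weakly-increasing order is a genuine bijection of the monomial basis fixing $\mathbf{j}$, and that the matrix entry $A_{(\sigma^{-1}(\mathbf{i}),\mathbf{j})}^{(\mathbf{i}',\mathbf{j}')}$ is thereby well-defined as a function of the underlying multiset. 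This is routine once one records that $S_n$ acts on the multiset of pairs only through the first coordinate, but it is where care is needed to avoid a spurious dependence on the chosen ordering.
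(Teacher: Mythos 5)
Your argument is correct and is essentially the paper's own proof: both compute $\sigma A\,x_{(\mathbf{i},\mathbf{j})}$ and $A\sigma\,x_{(\mathbf{i},\mathbf{j})}$ on the monomial basis and equate coefficients to extract the entrywise symmetry. Your extra attention to the reindexing/canonical-reordering bijection and to the reversibility of each step (for the ``if'' direction) is a welcome elaboration of what the paper leaves as a ``straightforward computation,'' but it is not a different method.
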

\begin{proof}
A straightforward computation show that
$\sigma A  x_{(\mathbf{i}, \mathbf{j})} =  \sum_{(\mathbf{i}', \mathbf{j}')} A_{(\mathbf{i}, \mathbf{j})}^{(\mathbf{i}', \mathbf{j}')} x_{(\sigma^{-1}(\mathbf{i}'), \mathbf{j}')}$ and
$ A \sigma x_{(\mathbf{i}, \mathbf{j})} = \sum_{(\mathbf{i}'', \mathbf{j}'')} A_{(\sigma^{-1}(\mathbf{i}), \mathbf{j})}^{(\mathbf{i}'', \mathbf{j}'')} x_{(\mathbf{i}'', \mathbf{j}'')}$.
Since $A\in \AA_{r,k}(n)$, $A\sigma = \sigma A$.  Hence, by equating coefficients, the claim follows.
\qedhere
\end{proof}
The main consequence of Lemma \ref{lem:elements} is that $A$ commutes with the action of $S_n$ on $\bP^r(V_{n,k})$ if and only
if the matrix entries of $A$ are equal on $S_n$-orbits.
We now describe these orbits and relate them to the combinatorics described in the previous sections.

Observe that each fixed pair $((\mathbf{i},\mathbf{j}), (\mathbf{i}',\mathbf{j}'))$ determines a matrix unit
$E_{(\mathbf{i},\mathbf{j})}^{(\mathbf{i}',\mathbf{j}')}\in \End(\bP^r(V_{n,k}))$ with 1 in the
$((\mathbf{i},\mathbf{j}), (\mathbf{i}',\mathbf{j}'))$ position and zeros everywhere else.
Therefore, the elements in $\AA_{r,k}(n)$ can be written as linear combinations of these matrix units.
Each fixed pair $((\mathbf{i},\mathbf{j}), (\mathbf{i}',\mathbf{j}'))$ determines a partition of
the multiset $\mathbf{j} \uplus \mathbf{j}'$ into at most $n$ blocks obtained by placing
$j_a$ and $j_b$ in the same block if and only if $i_a=i_b$.
Two pairs $((\mathbf{i},\mathbf{j}), (\mathbf{i}',\mathbf{j}'))$
and $((\mathbf{s},\mathbf{t}), (\mathbf{s'},\mathbf{t'}))$ are in the same $S_n$ orbit if they give rise to the same multiset
partition.

Recall that in Section \ref{ssec:coloredmsp} we introduced the notion of a colored multiset partition.
This is a data structure that encodes the objects which index the matrix units in $\End(\bP^r(V_{n,k}))$.
Each pair $((\mathbf{i},\mathbf{j}), (\mathbf{i}',\mathbf{j}'))$
corresponds to a colored multiset partition, where the colors are determined by the $\mathbf{i}$ and $\mathbf{i}'$ and the
underlying multiset partition is determined by the $\mathbf{j}$ and $\mathbf{j}'$.   This means that each
colored multiset partition corresponds to a matrix unit.
Hence we can define $E_{\pi^{\bc}} := E_{(\mathbf{i},\mathbf{j})}^{(\mathbf{i}',\mathbf{j}')}$ where
the pair $((\mathbf{i},\mathbf{j}), (\mathbf{i}',\mathbf{j}'))$ is the sequence of pairs
encoded by the colored multiset partition $\pi^{\bc}$.

\begin{example} If $r=5$, $k=4$ and $n=7$, let $(\mathbf{i}, \mathbf{j})
= ( (1,1),(3,1),(3,1),(5,2),(6, 4))$ and $(\mathbf{i}', \mathbf{j}')= ((5,\o1),(5,\o1),(6,\o2),(6,\o4),(2,\o4))$.
The pair $((\mathbf{i}, \mathbf{j}), (\mathbf{i}', \mathbf{j}'))$ is an index for a matrix unit
which determines a colored multiset partition
$\pi^{(1,3,5,2,6)} = \dcl \dcl 1\dcr_{\bf 1}$, $\dcl 1,1\dcr_{\bf 3}$, $\dcl 2, \o1, \o1\dcr_{\bf 5}$, $\dcl \o4 \dcr_{\bf 2}$,
$\dcl 4, \o2, \o4\dcr_{\bf 6}$ $\dcr$
of $[4]\cup [\o4]$.

The pair
$(((2,1),(4,1),(4,1),(1,2),(3, 4)), ((1,\o1),(1,\o1),(3,\o2),(3,\o4),(5,\o4)))$
is another index corresponding to the colored multiset partition is $\pi^{(2,4,1,5,3)}$
and has the same underlying multiset partition $\pi$.
\end{example}

Using the notation that we introduced at the end of Section \ref{ssec:coloredmsp}, the
product of two matrix units $E_{\pi^{\bc}}$ and $E_{\gamma^{\bc'}}$ is
\[ E_{\pi^\bc} \cdot E_{\gamma^{\bc'}}=
\delta_{\tau^{\bc''}_{top}, \pi^\bc_{top}}
\delta_{\tau^{\bc''}_{bot}, \gamma^\bc_{bot}}
\delta_{\pi^{\bc''}_{bot},\gamma^{\bc'}_{top}} E_{\tau^{\bc''}}\]
where $\delta_{A,B}$ is the Kronecker delta.  For a permutation
$\sigma \in S_n \subseteq \End(\bP^r(V_{n,k}))$
which acts on $\bP^r(V_{n,k})$, we have that $\sigma E_{\pi^\bc} = E_{\pi^{\sigma(\bc)}} \sigma$.

Let $\pi$ be a multiset partition with $r$ elements from $[k]$ and $r$ elements from $[\o{k}]$, then define
\begin{equation}\label{eq:orbitdef}
O_\pi := \sum_{\pi^{\bc} \rightarrow \pi} E_{\pi^{\bc}}
\end{equation}
where the sum runs over all  colored multiset set partitions $\pi^{\bc}$
that give rise to the same multiset partition $\pi$.  Notice that since the colors
that index the multisets are all in $[n]$, there can be at most $n$ parts in the set partition $\pi$,
otherwise the sum is empty and the element is $0$.

\begin{prop}{(The orbit basis)}  Recall that $\Pi_{r,k,n}$ is the set of all multiset partitions
with at most $n$ parts of a multiset with $r$ elements from the set $[k]$ and $r$ elements from
$[\overline{k}]$.
For $r,k,n>0$, the set of elements $\{ O_\pi \, |\,  \pi \in \Pi_{r,k,n}\}$ is a basis of $\AA_{r,k}(n)$.
\end{prop}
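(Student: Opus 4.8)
The plan is to realize $\AA_{r,k}(n)$ as the space of $S_n$-conjugation-invariants inside $\End(\bP^r(V_{n,k}))$ and then invoke the standard fact that such a commutant has a basis given by the orbit sums of matrix units. Concretely, by Lemma \ref{lem:elements} an operator $A$ lies in $\AA_{r,k}(n)$ exactly when its matrix entries $A_{(\mathbf{i},\mathbf{j})}^{(\mathbf{i}',\mathbf{j}')}$ are constant along $S_n$-orbits of index pairs. Writing $A$ in the matrix-unit basis $A = \sum A_{(\mathbf{i},\mathbf{j})}^{(\mathbf{i}',\mathbf{j}')} E_{(\mathbf{i},\mathbf{j})}^{(\mathbf{i}',\mathbf{j}')}$, this constancy says precisely that $A$ is a linear combination of the orbit sums of matrix units.

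The next step is to identify these orbit sums with the elements $O_\pi$ of \eqref{eq:orbitdef}. The discussion preceding the statement established a bijection between index pairs and colored multiset partitions, $E_{\pi^\bc} = E_{(\mathbf{i},\mathbf{j})}^{(\mathbf{i}',\mathbf{j}')}$, under which two pairs lie in the same $S_n$-orbit if and only if they share the same underlying uncolored multiset partition $\pi$; equivalently, the conjugation action is $\sigma E_{\pi^\bc}\sigma^{-1} = E_{\pi^{\sigma(\bc)}}$, which merely relabels colors. Hence the $S_n$-orbit through $E_{\pi^\bc}$ is exactly $\{E_{\pi^{\bc'}} : \pi^{\bc'}\to\pi\}$, and its orbit sum is $O_\pi$. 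Combining this with the previous paragraph, every $A\in\AA_{r,k}(n)$ can be written $A=\sum_\pi c_\pi O_\pi$, and conversely each $O_\pi$ lies in $\AA_{r,k}(n)$ because its entries are constant on orbits, so Lemma \ref{lem:elements} applies. Thus $\{O_\pi\}$ spans $\AA_{r,k}(n)$.

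For linear independence I would use that the matrix units $E_{\pi^\bc}$ form a basis of $\End(\bP^r(V_{n,k}))$ and that distinct multiset partitions $\pi$ contribute pairwise disjoint families of matrix units. A nontrivial relation among the nonzero $O_\pi$ would therefore be a nontrivial relation among distinct basis vectors, which is impossible. It remains to pin down which $\pi$ give nonzero orbit sums: $O_\pi$ is nonempty exactly when $\pi$ admits a coloring, that is, an assignment of distinct colors from $[n]$ to its blocks, and this is possible if and only if $\ell(\pi)\le n$. This is precisely the condition $\pi\in\Pi_{r,k,n}$, so the nonzero orbit sums are $\{O_\pi : \pi\in\Pi_{r,k,n}\}$, and they form a basis.

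Since almost every ingredient is already in place, I do not expect a serious obstacle: the argument is mainly the assembly of Lemma \ref{lem:elements} with the orbit/colored-multiset-partition dictionary. The one point deserving care is the bijection used in the second step---verifying both that the map from index pairs to colored multiset partitions is a bijection and that forgetting colors has fibers equal to single $S_n$-orbits---together with the clean nonvanishing criterion $O_\pi\neq 0 \iff \ell(\pi)\le n$, which is what forces the index set to be $\Pi_{r,k,n}$ rather than all of $\Pi_{r,k}$.
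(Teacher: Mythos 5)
Your proposal is correct and follows essentially the same route as the paper: Lemma \ref{lem:elements} gives that elements of $\AA_{r,k}(n)$ are exactly the operators whose matrix entries are constant on $S_n$-orbits of index pairs, the orbit sums are identified with the $O_\pi$ via the colored-multiset-partition dictionary, and linear independence follows from the disjointness of the supports. Your extra care about the nonvanishing criterion $O_\pi\neq 0 \iff \ell(\pi)\le n$ is a point the paper handles in the remarks preceding the proposition rather than in the proof itself, but it is the same argument.
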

\begin{proof}
Since if $\pi^\bc \rightarrow \pi$, then
$\pi^{\sigma(\bc)} \rightarrow \pi$ for all $\sigma \in S_n$, it follows that
for all $\sigma \in S_n$ that $\sigma O_\pi = O_\pi \sigma$ and hence $O_\pi \in \AA_{r,k}(n)$.
By Lemma \ref{lem:elements}, we have that for any $A\in \AA_{r,k}(n)$, $A$ is a linear combination of the $O_\pi$.
Since each $O_\pi$ is the sum of a disjoint set of matrix units, these matrices are linearly independent
and therefore the collection is a basis.
\end{proof}

It follows that the dimension of $\AA_{r,k}(n)$ is equal to $|\Pi_{r,k,n}|$.
The number of multiset partitions has been previously studied and asymptotic formulas
have been obtained when the multiset partitions are counted by type,
see \cite{Ben} for details.  A generating function formula for this dimension
is given in Corollary \ref{cor:dimformula}.

\subsection{$\AA_{r,k}(n)$ is isomorphic to $\MP_{r,k}(n)$ for $n \geq 2r$}

In the previous section we showed that $\AA_{r,k}(n)$ has a basis
indexed by the elements in $\Pi_{r,k,n}$.
If $n\geq 2r$, $\Pi_{r,k,n} = \Pi_{r,k}$ and this shows that
$\AA_{r,k}(n)$ and $\MP_{r,k}(n)$ are isomorphic as vector spaces.
In this section we will show that they are isomorphic as algebras.

\begin{lemma}\label{lem:colpart}
For positive integers $r,k,n$ and $\pi, \gamma \in \Pi_{r,k,n}$,
the product on the orbit basis can be expressed as
\begin{equation} \label{eq:orbitproduct}
O_\pi \cdot O_\gamma= \sum_{\bc:\pi^\bc \rightarrow \pi} \sum_{\bc':\ga^{\bc'} \rightarrow \ga}
\sum_{\tau \in \Pi_{r,k,n}}
\delta_{\ga^{\bc'}_{top}, \pi^\bc_{bot}}
\delta_{\pi^\bc_{top}, \tau^{(1,2,\ldots,\ell(\tau))}_{top}}
\delta_{ \ga^{\bc'}_{bot}, \tau^{(1,2,\ldots,\ell(\tau))}_{bot}}
O_\tau~.
\end{equation}
\end{lemma}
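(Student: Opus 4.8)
The plan is to expand the product directly in terms of matrix units and then re-express the result in the orbit basis, exploiting the fact that $O_\pi\cdot O_\gamma$ lies in $\AA_{r,k}(n)$ and therefore has matrix-unit coefficients that are constant on $S_n$-orbits. First I would use the definition $O_\pi=\sum_{\bc:\pi^\bc\rightarrow\pi}E_{\pi^\bc}$ to write
\[ O_\pi\cdot O_\gamma=\sum_{\bc:\pi^\bc\rightarrow\pi}\sum_{\bc':\gamma^{\bc'}\rightarrow\gamma}E_{\pi^\bc}\cdot E_{\gamma^{\bc'}}~. \]
Then I would apply the matrix-unit multiplication rule recalled just above Equation \eqref{eq:orbitdef}: each product $E_{\pi^\bc}\cdot E_{\gamma^{\bc'}}$ vanishes unless the middle data agree, i.e. unless $\pi^\bc_{bot}=\gamma^{\bc'}_{top}$, and when it is nonzero it equals a single matrix unit $E_{\tau^{\bc''}}$ whose top is $\pi^\bc_{top}$ and whose bottom is $\gamma^{\bc'}_{bot}$. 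This produces the first factor $\delta_{\gamma^{\bc'}_{top},\pi^\bc_{bot}}$ and reduces the double sum to a sum of individual matrix units.

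The key step is to convert this sum of matrix units back into the orbit basis. Since $O_\pi,O_\gamma\in\AA_{r,k}(n)$ the product is again in $\AA_{r,k}(n)$, so by Lemma \ref{lem:elements} its coefficient on any matrix unit depends only on the $S_n$-orbit of that unit; equivalently it is a linear combination $\sum_\tau c_\tau O_\tau$. Because each matrix unit occurs in exactly one $O_\tau$ with coefficient $1$ (the $O_\tau$ are sums of disjoint sets of matrix units, as noted in the proof of the orbit basis proposition), the scalar $c_\tau$ is precisely the coefficient of any single representative matrix unit of $\tau$ in the expansion, and I would use the canonical representative $E_{\tau^{(1,2,\ldots,\ell(\tau))}}$. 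A given nonzero product $E_{\pi^\bc}\cdot E_{\gamma^{\bc'}}$ contributes to $E_{\tau^{(1,2,\ldots,\ell(\tau))}}$ exactly when its top and bottom match those of $\tau^{(1,2,\ldots,\ell(\tau))}$, that is when $\pi^\bc_{top}=\tau^{(1,2,\ldots,\ell(\tau))}_{top}$ and $\gamma^{\bc'}_{bot}=\tau^{(1,2,\ldots,\ell(\tau))}_{bot}$; these two conditions supply the remaining two delta factors. Summing the resulting indicator over all $(\bc,\bc')$ yields $c_\tau$, and assembling $O_\pi\cdot O_\gamma=\sum_\tau c_\tau O_\tau$ gives Equation \eqref{eq:orbitproduct} after interchanging the order of summation.

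The main obstacle — and the one point worth stating carefully — is the multiplicity bookkeeping in this conversion: the same target matrix unit $E_{\tau^{\bc''}}$ can be produced by several distinct pairs $(\bc,\bc')$ that share the same top and bottom colorings but differ in the middle coloring at which $\pi^\bc_{bot}=\gamma^{\bc'}_{top}$. It is exactly this overcounting that makes $c_\tau$ a genuine sum over colorings rather than a single term, so I would emphasize that the coefficient counts the number of compatible middle colorings. The remainder is routine: the centralizer property guarantees that restricting to the canonical coloring $\tau^{(1,2,\ldots,\ell(\tau))}$ loses no information, and the three Kronecker deltas record, respectively, the match condition for a nonzero matrix-unit product and the two conditions identifying which orbit element $O_\tau$ receives the contribution.
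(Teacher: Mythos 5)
Your proposal is correct and follows essentially the same route as the paper: expand both factors into matrix units via Equation \eqref{eq:orbitdef}, apply the matrix-unit product rule to obtain the compatibility delta, and then read off the coefficient of $O_\tau$ from the canonical representative $E_{\tau^{(1,2,\ldots,\ell(\tau))}}$ using the disjoint-support property of the orbit basis. Your explicit remark that distinct pairs $(\bc,\bc')$ differing only in the middle coloring can produce the same target matrix unit is a worthwhile clarification of the bookkeeping that the paper leaves implicit.
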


\begin{proof}
Using Equation \eqref{eq:orbitdef},
we expand $O_\pi$ and $O_\gamma$ using colored multiset partitions, we have
\begin{align*}
O_\pi \cdot O_\gamma &= \sum_{\bc : \pi^\bc \rightarrow \pi}
\sum_{\bc' : \ga^{\bc'} \rightarrow \ga}
E_{\pi^{\bc}} E_{\ga^{\bc'}}\\
&= \sum_{\bc : \pi^\bc \rightarrow \pi} \sum_{\bc' : \ga^{\bc'} \rightarrow \ga}
\sum_{\tau}
\sum_{\bc'':\tau^{\bc''}\rightarrow \tau} \delta_{\ga^{\bc'}_{top},{\pi^\bc}_{bot}}
\delta_{\pi^{\bc}_{top},\tau^{\bc''}_{bot}}
\delta_{\ga^{\bc'}_{bot},\tau^{\bc''}_{bot}}
E_{\tau^{\bc''}}~.
\end{align*}
Now since the orbit basis is defined so that the support is disjoint when
expressed in the basis
elements $\{ E_{\pi^\bc} \}$ of $\End(\bP^r(M_{n,k}))$,
we only need  take a coefficient of a representative element
from the expression.  That is, in any expression in $\End_{\CC S_n}(\bP^r(M_{n,k}))$,
the coefficient of $O_\tau$ is equal to the coefficient of $E_{\tau^{\bc}}$ for
any coloring $\bc$.  Hence we can take $\bc = (1,2,\ldots,\ell(\tau))$ as a representative
element of that orbit, and Equation \eqref{eq:orbitproduct} follows.
\end{proof}

Now the following lemma shows that
the product in Equation \eqref{eq:orbitproduct} can be transformed
as a sum over elements of $\Gamma_{r,k}$, i.e., multisets with $r$ elements
from each in $[k]$, $[\o k]$ and $[\b{ \b k}]$.   Recall that a coloring $\mathbf{c}$   of
a multiset partition $\pi$ is a sequence with all distinct entries in $[n]$. The entries
have to be distinct because two blocks
of the multiset partition $\pi$ cannot have the same color.

\begin{lemma}\label{lem:bijectionexpression}
Fix positive integers $r,k,n$ and elements $\pi, \gamma, \tau \in \Pi_{r,k,n}$.
There is a bijection between pairs of colorings $(\bc, {\bc'}) \in [n]^{\ell(\pi)} \times [n]^{\ell(\gamma)}$,
each with distinct entries, of $\pi$ and $\gamma$
such that $\gamma^{\bc'}_{top} = \pi^\bc_{bot}$, $\pi^\bc_{top} = \tau^{(1,2,\ldots,\ell(\tau))}$
and $\gamma^{\bc'}_{bot} = \tau^{(1,2,\ldots,\ell(\tau))}$
and pairs $(\nu,\bc'')$ with $\nu \in \Gamma_{r,k}$, $\bc'' \in [n]^{\ell(\nu)}$ with distinct entries
such that $\nu|_{[k] \cup [\o{k}]} = \pi$,
$\nu|_{[\o{k}] \cup [\b{\b{k}}]} = \o{\gamma}$,
$\nu^{\bc''}|_{[k]\cup[\b{\b{k}}]} = \tau^{(1,2,\ldots,\ell(\tau))}$.
\end{lemma}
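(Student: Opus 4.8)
The plan is to exhibit the bijection concretely by passing between a single colored three-row diagram and the colored two-row diagrams obtained by restricting it to each pair of rows. The guiding principle is that the coloring $\bc''$ labels the blocks of $\nu$ with distinct elements of $[n]$, so it records exactly which parts in the three rows $[k]$, $[\o{k}]$, $[\b{\b{k}}]$ belong together; consequently $\nu^{\bc''}$ can be reconstructed from its restrictions once we know how colors on adjacent rows are shared.

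Given $(\nu,\bc'')$ from the second set, I restrict the colored blocks of $\nu$ to the rows $[k]\cup[\o{k}]$ and to the rows $[\o{k}]\cup[\b{\b{k}}]$. By (a) and (b) the underlying multiset partitions are $\pi$ and $\o\gamma$, so (after deleting the extra bar) these restrictions are colorings $\pi^\bc$ and $\gamma^{\bc'}$ with colors inherited from $\bc''$. Conversely, from a pair $(\bc,\bc')$ I build $\nu$ by gluing: I place a block of $\pi$ colored $c$ and a block of $\gamma$ colored $c$ into one block of $\nu$ exactly when they share the color $c$, putting the $[k]$-entries of the $\pi$-block on top, the common $[\o{k}]$-entries in the middle, and the twice-barred entries of the $\gamma$-block on the bottom, and I color the new block $c$. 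Since $\bc$ and $\bc'$ are injective, each color indexes at most one block on each side, so the gluing is unambiguous, $\bc''$ is injective, and $\nu\in\Gamma_{r,k}$; restriction then undoes gluing because the shared color tells us exactly which blocks were merged, so the two maps are mutually inverse.

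Next I would read off the remaining conditions as statements about a single $\nu^{\bc''}$. Both $\pi^\bc_{bot}$ and $\gamma^{\bc'}_{top}$ equal the colored middle row $\nu^{\bc''}|_{[\o{k}]}$, so the matching requirement $\gamma^{\bc'}_{top}=\pi^\bc_{bot}$ is exactly the compatibility that makes the middle row of the glued diagram well-defined. Restricting instead to the outer rows $[k]\cup[\b{\b{k}}]$, the colored top is $\pi^\bc_{top}=\nu^{\bc''}|_{[k]}$ and the colored bottom is $\gamma^{\bc'}_{bot}=\nu^{\bc''}|_{[\b{\b{k}}]}$, so the condition $\nu^{\bc''}|_{[k]\cup[\b{\b{k}}]}=\tau^{(1,2,\ldots,\ell(\tau))}$ splits into its top half $\pi^\bc_{top}=\tau^{(1,2,\ldots,\ell(\tau))}_{top}$ and its bottom half $\gamma^{\bc'}_{bot}=\tau^{(1,2,\ldots,\ell(\tau))}_{bot}$, the two conditions appearing in the statement.

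The step needing genuine care — and the main obstacle — is that these two halves together must be not only necessary but sufficient: matching the top and the bottom of the outer-row diagram $\nu^{\bc''}|_{[k]\cup[\b{\b{k}}]}$ against $\tau$ separately has to force the entire colored diagram to equal $\tau^{(1,2,\ldots,\ell(\tau))}$. The key is that $\tau$ is colored by its own block indices, so a color $c$ names a unique block $\tau_c$: if a block $B$ of the outer-row diagram carries color $c$, then the top condition gives $B|_{[k]}=\tau_c|_{[k]}$ when $B$ meets the top row and the bottom condition gives $B|_{[\b{\b{k}}]}=\tau_c|_{[\b{\b{k}}]}$ when $B$ meets the bottom row, while a color absent from one side forces the corresponding $\tau_c$ to be empty on that side. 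Since $B$ is nonempty it meets at least one row and the two identifications use the same value $c$, so $B=\tau_c$; injectivity of the coloring then promotes these blockwise equalities to the required equality of colored multiset partitions and pins the outer-row colors down to $\{1,\ldots,\ell(\tau)\}$, the leftover distinct colors being carried by the blocks of $\nu$ lying entirely in the middle row, which the restriction discards. Combining this equivalence with the bijection of the previous paragraphs yields the claimed correspondence between the two sets.
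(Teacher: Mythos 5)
Your proposal is correct and follows essentially the same route as the paper: construct $\nu$ by gluing blocks of $\pi^{\bc}$ and $\gamma^{\bc'}$ that share a color (with the condition $\gamma^{\bc'}_{top}=\pi^{\bc}_{bot}$ guaranteeing the middle row is consistent), and recover the pair by restricting $\nu^{\bc''}$ to the two adjacent pairs of rows. Your treatment is in fact more detailed than the paper's, which asserts the inverse construction without spelling out the verification that the two outer-row conditions on $\pi^{\bc}_{top}$ and $\gamma^{\bc'}_{bot}$ are equivalent to $\nu^{\bc''}|_{[k]\cup[\b{\b{k}}]}=\tau^{(1,2,\ldots,\ell(\tau))}$.
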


\begin{proof}
Consider a pair of colored multiset partitions
$(\pi^\bc, \gamma^{\bc'})$ which
satisfies $\gamma^{\bc'}_{top} = \pi^\bc_{bot}$, $\pi^\bc_{top} = \tau^{(1,2,\ldots,\ell(\tau))}$
and $\gamma^{\bc'}_{bot} = \tau^{(1,2,\ldots,\ell(\tau))}$.  This pair of colorings
uniquely determines a multiset partition $\nu \in \Gamma_{r,k}$.
That is, $\nu$ is the unique multiset partition such that there is a coloring $\bc''$
with $\nu^{\bc''}|_{[k] \cup [\o{k}]} = \pi^\bc$ and $\nu^{\bc''}|_{[\o{k}] \cup [\b{\b{k}}]} = \o{\gamma}^{\bc'}$.
To find $\nu$, start with $\pi$ and then add elements of
$[\b{\b{k}}]$ to the multisets from $\pi$ if the multiset from $\pi^\bc$
shares the same color as the multiset in $\gamma^{\bc'}$.  The multisets with only elements from $[\o{k}]$ in
$\gamma$ correspond to multisets with only elements in $[\b{\b{k}}]$ in $\nu$ and their coloring is
determined by $\gamma^{\bc'}$.
That is, the $\bc$ and $\bc'$ determine $\bc''$,
a coloring of $\nu$ such that $\tau^{(1,2,\ldots,\ell(\tau))} = \nu^{\bc''}|_{[k] \cup [\b{\b{k}}]}$.
Moreover, a coloring of $\nu$ also determines the colored multiset partitions $\pi^\bc$ and $\gamma^{\bc'}$
by reversing the process.
\end{proof}

\begin{theorem}\label{th:isomorphism}
For positive integers $n,k,r$ such that $n \geq 2r$, the algebra $\MP_{r,k}(n)$ is isomorphic to $\AA_{r,k}(n)$.
\end{theorem}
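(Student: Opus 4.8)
The plan is to define the linear map $\Phi : \MP_{r,k}(n) \to \AA_{r,k}(n)$ by $\Phi(X_\pi) = O_\pi$ and to show it is an algebra isomorphism. Since $n \geq 2r$, every $\pi \in \Pi_{r,k}$ has content of size $2r$ and hence at most $2r \leq n$ blocks, so $\Pi_{r,k,n} = \Pi_{r,k}$; thus $\Phi$ carries the basis $\{X_\pi\}$ bijectively onto the basis $\{O_\pi\}$ and is automatically a linear isomorphism. It remains only to check that $\Phi$ respects multiplication, and for this it suffices to compare structure constants: I would show that for all $\pi,\gamma,\tau \in \Pi_{r,k}$ the coefficient of $O_\tau$ in $O_\pi \cdot O_\gamma$ equals the coefficient of $X_\tau$ in $X_\pi \cdot X_\gamma$ after specializing $x=n$ in Equation \eqref{eq:genericproduct}. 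No use is made of associativity of $\MP_{r,k}(x)$, so there is no circularity with Proposition \ref{prop:associative}.

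To compute the left-hand coefficient I would start from the orbit-product formula of Lemma \ref{lem:colpart}, which expresses the coefficient of $O_\tau$ in $O_\pi\cdot O_\gamma$ as the number of pairs of colorings $(\bc,\bc')$ of $\pi$ and $\gamma$ satisfying $\gamma^{\bc'}_{top} = \pi^\bc_{bot}$, $\pi^\bc_{top} = \tau^{(1,2,\ldots,\ell(\tau))}_{top}$ and $\gamma^{\bc'}_{bot} = \tau^{(1,2,\ldots,\ell(\tau))}_{bot}$. Applying the bijection of Lemma \ref{lem:bijectionexpression}, this count equals the number of pairs $(\nu,\bc'')$ with $\nu \in \Gamma_{r,k}$, $\nu|_{[k]\cup[\o k]}=\pi$, $\nu|_{[\o k]\cup[\b{\b{k}}]}=\o\gamma$ and $\nu^{\bc''}|_{[k]\cup[\b{\b{k}}]} = \tau^{(1,2,\ldots,\ell(\tau))}$. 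The last condition forces the underlying multiset partition $\nu|_{[k]\cup[\b{\b{k}}]}$ to be $\tau$, so the sum over $\tau$ collapses and the coefficient of $O_\tau$ becomes $\sum_\nu \#\{\bc'' : \nu^{\bc''}|_{[k]\cup[\b{\b{k}}]} = \tau^{(1,2,\ldots,\ell(\tau))}\}$, the sum running over exactly those $\nu$ that contribute to the coefficient of $X_\tau$ in Equation \eqref{eq:genericproduct}.

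The crux is to show that for each such $\nu$ the number of admissible colorings $\bc''$ is precisely $a_\nu\, b_\nu(n)$, matching Equations \eqref{eq:defanu} and \eqref{eq:defbnu}. I would label the $\ell(\nu)$ blocks of $\nu$, count color assignments, and then divide by the automorphisms of $\nu$ (permutations of equal blocks). The blocks split into two kinds, and equal blocks never cross between them, so the conventions decouple. A block $T$ with $T|_{[k]\cup[\b{\b{k}}]}\neq\emptyset$ restricts to a block of $\tau$; for a fixed distinct multiset $S$ occurring in $\tau$, the blocks of $\nu$ restricting to $S$ are exactly the $\ell(\nu_S)=m_S(\tau)$ blocks of $\nu_S$, and they must receive bijectively the consecutive colors the standard coloring assigns to the copies of $S$, giving $\ell(\nu_S)!$ labeled choices. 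The remaining blocks are those of $\beta$, lying entirely in $[\o k]$; they vanish under restriction, so they take any distinct colors among the $n-\ell(\tau)$ left unused, giving $(n-\ell(\tau))_{\ell(\beta)}$ labeled choices since $\ell(\tau)=\ell(\nu|_{[k]\cup[\b{\b{k}}]})$. Dividing the product $\prod_S \ell(\nu_S)!\cdot(n-\ell(\tau))_{\ell(\beta)}$ by the automorphism count $\prod_S m(\nu_S)!\cdot m(\beta)!$ yields exactly $a_\nu\, b_\nu(n)$, and summing over $\nu$ reproduces the specialized structure constant of $\MP_{r,k}(x)$.

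I expect the main obstacle to be this final coloring count, and in particular the bookkeeping of the increasing-color convention for repeated blocks: one must verify that dividing by $m(\nu_S)!$ and $m(\beta)!$ exactly accounts for interchangeable equal blocks and that the leftover palette genuinely has size $n-\ell(\tau)$. Here the hypothesis $n\geq 2r$ enters essentially, since it guarantees both that $\tau \in \Pi_{r,k,n}$ (so $O_\tau\neq 0$) and that $\ell(\tau)\leq 2r\leq n$, whence $n-\ell(\tau)\geq 0$ and $b_\nu(n)$ truly counts colorings rather than producing a spurious nonzero value; when $\ell(\nu)>n$ the factor $(n-\ell(\tau))_{\ell(\beta)}$ contains a zero term, correctly reflecting that $\nu$ cannot be colored with distinct colors from $[n]$. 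Once the structure constants agree, $\Phi$ is a bijective algebra homomorphism, hence the desired isomorphism.
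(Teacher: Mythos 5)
Your proposal is correct and follows essentially the same route as the paper: the map $X_\pi \mapsto O_\pi$, the reduction via Lemma \ref{lem:colpart} and the bijection of Lemma \ref{lem:bijectionexpression}, and the final count of colorings $\bc''$ of a fixed $\nu$ split into the blocks meeting $[k]\cup[\b{\b{k}}]$ (giving $a_\nu$) and the blocks lying entirely in $[\o{k}]$ (giving $b_\nu(n)$). Your bookkeeping of the increasing-color convention via the factors $\prod_S \ell(\nu_S)!/m(\nu_S)!$ and $m(\beta)!$ matches Equations \eqref{eq:defanu}--\eqref{eq:defbnu} exactly, and your observation that the argument makes no appeal to associativity of $\MP_{r,k}(x)$ correctly reflects the logical order of the paper.
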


\begin{proof}
We need to show that there is a bijective homomorphism.
We know that $\{X_\pi \, |\, \pi \in \Pi_{r,k}\}$ is a basis for $\MP_{r,k}(n)$ and
$\{O_\pi \, |\, \pi \in \Pi_{r,k,n}\}$ is a basis for $\AA_{r,k}(n)$ and since $n\geq 2r$, $\Pi_{r,k} = \Pi_{r,k,n}$.
Then we define a linear map $\phi: \MP_{r,k}(n) \rightarrow \AA_{r,k}(n)$
on the basis $\phi(X_\pi) = O_\pi$ and extend this map linearly.

The product of the $O_\pi$'s is just matrix multiplication.
We will show that the coefficient of $O_\tau$ in $O_\pi O_\gamma$ is equal to
$\sum_{\nu} a_\nu b_\nu(n)$ where the sum is over all set partitions
with $\nu \in \Gamma_{r,k}$
such that $\nu|_{[k] \cup [\b{\b{k}}]} = \tau$, $\nu|_{[k] \cup [\o{k}]} = \pi$ and
$\nu|_{[\o{k}] \cup [\b{\b{k}}]} = \o{\gamma}$.
By Equation \eqref{eq:genericproduct}, this will show that $\phi$ is an isomorphism.

By Lemma \ref{lem:colpart} and the bijection in Lemma \ref{lem:bijectionexpression} we have that
the coefficient of
$O_\tau$ in $O_\pi O_\gamma$ is
\begin{align*}
\sum_{\bc:\pi^\bc \rightarrow \pi} &\sum_{\bc':\ga^{\bc'} \rightarrow \ga}
\delta_{\ga^{\bc'}_{top}, \pi^\bc_{bot}}
\delta_{\pi^\bc_{top}, \tau^{(1,2,\ldots,\ell(\tau))}_{top}}
\delta_{ \ga^{\bc'}_{bot}, \tau^{(1,2,\ldots,\ell(\tau))}_{bot}}\\
&=\sum_{\nu \in \Gamma_{r,k}} \sum_{\bc'': \nu^{\bc''} \rightarrow \nu}
\delta_{\nu|_{[k] \cup [\o{k}]},\pi}
\delta_{\nu|_{[\o{k}] \cup [\b{\b{k}}]}, \o{\gamma}}
\delta_{\nu^{\bc''}|_{[k]\cup[\b{\b{k}}]}, \tau^{(1,2,\ldots,\ell(\tau))}}~.
\end{align*}

Now fix a $\nu \in \Gamma_{r,k}$ with $\nu|_{[k] \cup [\b{\b{k}}]} = \tau$,
$\nu|_{[k] \cup [\o{k}]} = \pi$ and
$\nu|_{[\o{k}] \cup [\b{\b{k}}]} = \o{\gamma}$, and then enumerate the number of
colorings $\bc''$ such that $\nu^{\bc''}|_{[k]\cup[\b{\b{k}}]} = \tau^{(1,2,\ldots,\ell(\tau))}$.
As we will see, this expression is a polynomial in $n$.
We partition $\nu$ into two parts:  $\alpha$ which consists of the multisets which
contain at least one element from $[k] \cup [\b{\b{k}}]$, and $\beta$ which consists of those containing
only elements of $[\o{k}]$.  That is, $\nu = \alpha \uplus \beta$ and $\ell(\alpha) = \ell(\tau)$.
There are $a_\nu = \ell(\alpha)!/m(\alpha)!$ ways of coloring $\alpha$ and, once that
is completed, there are $b_\nu(n) = (n-\ell(\alpha))_{\ell(\beta)}/m(\beta)!$ ways of coloring the
parts of $\beta$ such that $\nu^{\bc''}|_{[k]\cup[\b{\b{k}}]} = \tau^{(1,2,\ldots,\ell(\tau))}$.

This shows that when $x=n \geq 2r$, the coefficient of $O_\tau$ in $O_\pi O_\gamma$
is equal to the coefficient of $X_\tau$ in $X_\pi X_\gamma$ and hence $\phi : \MP_{r,k}(n) \rightarrow \AA_{r,k}(n)$
is an algebra homomorphism.
\end{proof}

\section{Irreducible representations of $\AA_{r,k}(n)$} \label{sec:irreps}
Since  $\AA_{r,k}(n)$ is isomorphic to
the centralizer algebra of $\CC S_n$ acting
on $\bP^r( V_{n,k})$
we can use well known theorems about centralizer algebras
to begin to understand the structure of the irreducible
representations.   We begin this section with a summary of  some general results about
the representations of algebras which are centralizers of each other. Then,  we
show how these apply in the case of $\CC S_n$ and $\AA_{r,k}(n)$
acting on the space $\bP^r( V_{n,k})$.

\begin{theorem}\label{th:reptheory} \cite[Section 6.2.5]{Procesi} \cite[Section 4.2.1]{GoodWall2}
Let $A$ and $B$ be algebras acting on a module
$W$ such that $B = \End_A(W)$.
There is a set $P$ (a subset of the index set of
the irreducible representations of $A$)
such that for each $x \in P$,
$E_x$ is an irreducible $A$-module occurring in the decomposition of $W$
as a $A$-module.  Then if we set $F_x= \Hom(E_x, W)$,
then $F_x$ is an irreducible $B$-module
and the decomposition of $W$ as an $A \times B$-module is
$$W \cong \bigoplus_{x \in P} E_x \otimes F_x~.$$
Moreover, the dimension of $E_x$ is equal to
the multiplicity of $F_x$ in $W$ as a $B$-module
and the dimension of $F_x$ is equal to
the multiplicity of $E_x$ in $W$ as a $A$-module.
\end{theorem}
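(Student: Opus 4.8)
The plan is to reduce the statement to the structure theory of semisimple algebras, exploiting the fact that in the situation of interest $A = \CC S_n$ is semisimple, so that $W$ is a semisimple $A$-module and Artin--Wedderburn together with Schur's lemma apply. First I would decompose $W$ into its $A$-isotypic components, writing $W \cong \bigoplus_{x \in P} E_x \otimes F_x$, where $P$ indexes the distinct irreducible $A$-modules $E_x$ that occur, $F_x := \Hom_A(E_x, W)$ is the multiplicity space, and the isomorphism is the evaluation map $v \otimes \varphi \mapsto \varphi(v)$ summed over $x$. At this stage $F_x$ is merely a vector space whose dimension $m_x$ equals the multiplicity of $E_x$ in $W$, which already establishes one of the two multiplicity assertions ($\dim F_x$ equals the multiplicity of $E_x$ in $W$ as an $A$-module).

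Next I would equip each $F_x$ with a $B$-module structure by post-composition: for $b \in B = \End_A(W)$ and $\varphi \in \Hom_A(E_x, W)$ set $b \cdot \varphi = b \circ \varphi$, which again lies in $\Hom_A(E_x, W)$ since $b$ is $A$-linear. Under the evaluation isomorphism this makes $W \cong \bigoplus_{x \in P} E_x \otimes F_x$ an $A \times B$-module with $A$ acting on the left tensor factors and $B$ on the right, so the claimed decomposition follows once irreducibility of the $F_x$ is in hand.

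To identify $B$ and prove the $F_x$ are irreducible I would compute $\End_A(W)$ directly. Since $\Hom_A(E_x, E_y) = \delta_{xy}\CC$ by Schur's lemma, one obtains
$$B = \End_A\Big(\bigoplus_{x \in P} E_x \otimes F_x\Big) \cong \bigoplus_{x \in P} \End_{\CC}(F_x) = \bigoplus_{x \in P} M_{m_x}(\CC).$$
This exhibits $B$ as a semisimple algebra whose simple blocks are indexed by $P$; the block $M_{m_x}(\CC)$ acts on $F_x$ as its defining representation and kills the other summands, so each $F_x$ is an irreducible $B$-module and these exhaust the irreducibles of $B$. For the remaining multiplicity statement I would read the $B$-module structure of $W$ off the same decomposition: from the $B$-perspective the factor $E_x$ is acted on trivially, so $W \cong \bigoplus_{x \in P} F_x^{\oplus \dim E_x}$ as a $B$-module, whence the multiplicity of the irreducible $F_x$ in $W$ equals $\dim E_x$.

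The main obstacle is not any single computation but the semisimplicity that underpins all of it: every step — the isotypic decomposition, the evaluation isomorphism, and the Schur-lemma computation of $\End_A(W)$ — requires $W$ to be a semisimple $A$-module. In the application this is automatic because $A = \CC S_n$ is semisimple over a field of characteristic zero, while in the fully general formulation one must recognize semisimplicity as the standing hypothesis of the double-centralizer theorem being cited from \cite{Procesi, GoodWall2}. Once that is granted, the symmetry between the $A$- and $B$-decompositions of $W$ is built into the single bimodule decomposition $W \cong \bigoplus_{x \in P} E_x \otimes F_x$, and both dimension-multiplicity identities are simply the two ways of reading it.
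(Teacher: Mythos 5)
The paper gives no proof of this theorem; it is quoted directly from \cite{Procesi} and \cite{GoodWall2}, and your argument is precisely the standard double-centralizer proof found in those references, so it is correct and matches the route the paper implicitly relies on. In particular you rightly identify the implicit semisimplicity hypothesis (automatic here since $A=\CC S_n$ acts on the finite-dimensional space $\bP^r(V_{n,k})$ in characteristic zero), read $F_x$ as $\Hom_A(E_x,W)$ with $B$ acting by post-composition, and obtain both multiplicity statements from the single bimodule decomposition.
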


In particular, we are allowing $A = \CC S_n$ to act
on the polynomial ring $\bP^r( V_{n,k})$
and we know that the irreducible representations of $\CC S_n$
are indexed by the set of partitions of $n$.  Denote the irreducible
$\CC S_n$-module indexed by the partition $\lambda$ by $W^\lambda_{\CC S_n}$.
For given values of $k$ and $r$, it may be the case that  for only a subset of
the partitions $\lambda$ will $W^\lambda_{\CC S_n}$ occur with non-zero multiplicity in
$\bP^r( V_{n,k})$.  Let
\begin{equation} \label{eq:irreddef}
W^\lambda_{\AA_{r,k}(n)} :=
\Hom(W^\lambda_{\CC S_n}, \bP^r( V_{n,k}))
\end{equation}
represent the irreducible $\AA_{r,k}(n)$-module indexed by the partition $\lambda$.

For any partition $\lambda$ of $n$, we define the Reynolds operator
$R^\lambda = \frac{1}{n!} \sum_{\sigma \in S_n} \chi^{\lambda}(\sigma) \sigma$.
The operator $R^\lambda$ projects a $\CC S_n$-module
to the component of the module consisting of the direct
sum of $\CC S_n$ irreducibles indexed by the partition $\lambda$.
This means that for a $\CC S_n$-module $W$, we have
\begin{equation}\label{eq:reynoldsdecomp}
W \cong \bigoplus_{\lambda \in \PP_n} R^\lambda W~.
\end{equation}

In the case that $W = \bP^r( V_{n,k})$
the action of $\AA_{r,k}(n)$ commutes with the $S_n$ action and
$R^\lambda W$ is both a $\CC S_n$ and a $\AA_{r,k}(n)$-module and so
we have that
\begin{equation}\label{eq:reynoldstensor}
R^\lambda \bP^r( V_{n,k}) \cong
W^\lambda_{\AA_{r,k}(n)} \otimes W^\lambda_{\CC S_n}~.
\end{equation}

The following proposition gives the dimension of an irreducible $\AA_{r,k}(n)$
representation in terms of a Schur
function evaluated at a series. It also gives precise conditions for when this
dimension is non-zero.  The plethystic notation is defined in
Section \ref{subsec:sfnotation}.

\begin{prop} \label{prop:dimirred} For positive integers $n,k,r$ and $\lambda \in \PP_n$,
\begin{align}\label{eq:gfdim}
\dim W^\lambda_{\AA_{r,k}(n)} = \hbox{coefficient of }q^r\hbox{ in }s_\lambda\!\left[ \frac{1}{(1-q)^k} \right]~.
\end{align}
Moreover, this coefficient is non-zero (that is, the module $W^\lambda_{\AA_{r,k}(n)}$ exists) if and only if
$r \geq \sum_{j\geq1} (j-1) \cdot \sum_{i=t_{j-1}+1}^{t_{j}} \lambda_i$
where $t_d = \binom{k+0-1}{0} + \binom{k+1-1}{1} + \cdots + \binom{k+d-2}{d-1}$
and using the convention that $\lambda_d = 0$ for $d>\ell(\lambda)$.
\end{prop}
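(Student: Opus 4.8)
The plan is to reduce the dimension of $W^\lambda_{\AA_{r,k}(n)}$ to a multiplicity computation and then evaluate that multiplicity by computing the graded Frobenius characteristic of the polynomial ring. By Theorem~\ref{th:reptheory} applied to $A = \CC S_n$ and $B = \AA_{r,k}(n) = \End_{\CC S_n}(\bP^r(V_{n,k}))$ acting on $W = \bP^r(V_{n,k})$, the dimension of the irreducible $F_\lambda = W^\lambda_{\AA_{r,k}(n)}$ equals the multiplicity of $E_\lambda = W^\lambda_{\CC S_n}$ in $\bP^r(V_{n,k})$ as a $\CC S_n$-module. So it suffices to compute this multiplicity and show it equals the coefficient of $q^r$ in $s_\lambda[1/(1-q)^k]$.

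First I would compute the graded trace of $\sigma \in S_n$ on $\bP^\bullet(V_{n,k}) = \mathrm{Sym}(V_{n,k}^*)$. As a $\CC S_n$-module $V_{n,k} \cong \CC^n \otimes \CC^k$, with $\sigma$ acting by the permutation matrix on $\CC^n$ and trivially on $\CC^k$, and this representation is self-dual. Hence $\det(1 - q\,\sigma|_{V_{n,k}}) = \prod_{\ell}(1-q^\ell)^{k\,m_\ell(\sigma)}$, where $m_\ell(\sigma)$ is the number of $\ell$-cycles of $\sigma$, and
$$\sum_{r\ge0}\operatorname{tr}(\sigma\mid \bP^r(V_{n,k}))\,q^r = \prod_{\ell\ge1}\left(\frac{1}{(1-q^\ell)^k}\right)^{m_\ell(\sigma)}.$$
Summing over conjugacy classes with weight $p_\mu/z_\mu$ and using $p_\ell/(1-q^\ell)^k = p_\ell[X/(1-q)^k]$ together with $\Omega[Z] = \exp\big(\sum_\ell p_\ell[Z]/\ell\big)$, the total graded Frobenius characteristic is
$$\sum_{n\ge0}\sum_{r\ge0} \operatorname{ch}\big(\bP^r(V_{n,k})\big)\,q^r = \Omega\!\left[\frac{X}{(1-q)^k}\right] = \sum_{\lambda\in\PP} s_\lambda[X]\,s_\lambda\!\left[\frac{1}{(1-q)^k}\right],$$
the last equality being the Cauchy identity of Lemma~\ref{lem:sfids}(a). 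Pairing with $s_\lambda[X]$ in the Hall inner product (Lemma~\ref{lem:sfids}(b)) extracts exactly the multiplicity of $W^\lambda_{\CC S_n}$ in $\bP^r(V_{n,k})$ as the coefficient of $q^r$ in $s_\lambda[1/(1-q)^k]$, proving \eqref{eq:gfdim}.

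For the non-vanishing condition I would interpret $s_\lambda[1/(1-q)^k]$ combinatorially. Realizing the alphabet $1/(1-q)^k$ as the set of monomials in $k$ variables $z_1,\dots,z_k$ specialized by $z_i\mapsto q$, the coefficient of $q^r$ counts the tableaux in $\SSYT(\lambda)$ whose entries are such monomials, ordered by a total order refining total degree, and whose entries have total degree $r$; there are exactly $\binom{k+e-1}{e}$ monomials of degree $e$, so the $i$-th smallest monomial has degree $d$ precisely when $t_d < i \le t_{d+1}$. Since all coefficients are non-negative, the coefficient of $q^r$ is non-zero if and only if $r$ is at least the minimal possible total degree $r_{\min}(\lambda)$ of such a tableau, and I would finish by identifying $r_{\min}(\lambda)$ with the stated bound and checking that every $r\ge r_{\min}(\lambda)$ is attained.

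To compute $r_{\min}(\lambda)$ I would prove matching lower and upper bounds. For the lower bound, the entries in rows $1,\dots,i$ of any fixed column are $i$ distinct strictly increasing monomials, so the entry in row $i$ is at least the $i$-th smallest monomial and therefore has degree at least $\mathrm{weight}(i):= d$, where $t_d < i \le t_{d+1}$; summing over all cells gives $r_{\min}(\lambda) \ge \sum_i \mathrm{weight}(i)\,\lambda_i = \sum_{j\ge1}(j-1)\sum_{i=t_{j-1}+1}^{t_j}\lambda_i$. For the matching upper bound I would exhibit the filling in which every cell of row $i$ receives a monomial of degree $\mathrm{weight}(i)$: the band of row-indices $(t_{j-1},t_j]$ has size $\binom{k+j-2}{j-1}$, which is exactly the number of available monomials of degree $j-1$, so each column can be filled with distinct monomials of the required degrees and the result is a valid tableau of total degree equal to the bound. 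Finally, replacing the largest entry of this minimal tableau by monomials of successively higher degree raises the total degree one step at a time, so every $r\ge r_{\min}(\lambda)$ occurs. I expect the delicate step to be this last combinatorial analysis: verifying carefully that $\mathrm{weight}(i)$ is a genuine per-cell lower bound and that the proposed minimal filling is simultaneously column-strict and row-weakly-increasing across the whole shape, rather than merely column by column.
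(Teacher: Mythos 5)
Your proof is correct, and it is worth separating the two halves when comparing it to the paper's argument. For the identity \eqref{eq:gfdim} you take a genuinely different route: you compute the graded $S_n$-character of $\bP^{\bullet}(V_{n,k})$ directly from the Molien-type determinant formula $\det(1-q\,\sigma|_{V_{n,k}})^{-1}=\prod_{\ell}(1-q^{\ell})^{-k\,m_{\ell}(\sigma)}$, assemble the Frobenius characteristic into $\Omega\bigl[X/(1-q)^{k}\bigr]$, and finish with the Cauchy identity and Lemma \ref{lem:sfids}(b). The paper instead records the $GL_n$-character $s_r[kX_n]$ and invokes the Scharf--Thibon/Littlewood theorem on inner plethysm \cite{ST} to extract the $S_n$-multiplicity as $\left<s_r[kX],s_\lambda[\Omega[X]]\right>$; your version is self-contained and avoids that citation, essentially at the cost of redoing the character computation that the cited theorem packages. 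For the non-vanishing threshold the underlying idea is the same as the paper's --- the minimal $q$-degree in the specialization $s_\lambda\bigl[1/(1-q)^k\bigr]$ is realized by the filling that puts the $i$-th smallest letter in every cell of row $i$, giving the stated sum $\sum_{j\geq1}(j-1)\sum_{i=t_{j-1}+1}^{t_j}\lambda_i$ --- but your tableau formulation is more explicit on the two points the paper treats briskly: the per-cell lower bound (column-strictness forces the row-$i$ entry of each column to be at least the $i$-th smallest monomial, hence of degree at least $\mathrm{weight}(i)$, because your total order refines degree) and the fact that every $r$ above the minimum is attained (the paper gets this from an injection of multiset tableaux borrowed from \cite{OZ4}, while you get it internally by bumping a corner entry of the minimal filling to monomials of successively higher degree, which preserves semistandardness since larger degree implies larger in the order). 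Both of the steps you flag as delicate do go through as you describe, and your weight function and thresholds $t_d$ agree exactly with the paper's conventions.
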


\begin{proof} 
Considering $\bP^r( V_{n,k}) = \bP^r( V_{n,\{1,2,\ldots,k\}})$
as a $GL_n$-module,
\begin{equation*}
\bP^r( V_{n,\{1,2,\ldots,k\}}) = \bigoplus_{r_1 + r_2 + \cdots r_k=r}
\bP^{r_1}( V_{n,\{1\}}) \otimes \bP^{r_2}( V_{n,\{2\}}) \otimes \cdots \otimes \bP^{r_k}( V_{n,\{k\}})
\end{equation*}
where the sum is over all sequences of non-negative integers $(r_1, r_2, \ldots, r_k)$
whose sum is $r$.  The $GL_n$ character of a diagonal matrix acting
on a basis $\bP^d( V_{n,\{j\}})$ is equal to $s_d[X_n]$ where
the $X_n = x_1 + x_2 + \ldots + x_n$ are a sum of $n$ variables which
can be specialized to the eigenvalues of the element of $GL_n$.
Therefore this decomposition shows that the
character of the action on $\bP^r( V_{n,\{1,2,\ldots,k\}})$ is
(by Lemma \ref{lem:sfids} part (c)),
$$\sum_{r_1 + r_2 + \cdots +r_k=r} s_{r_1}[X_n] s_{r_2}[X_n] \cdots s_{r_k}[X_n] = s_r[kX_n]~.$$

Now Scharf and Thibon's result \cite[Theorem 4.1]{ST} (that, in turn, follows from
a theorem due to Littlewood \cite[Theorem XI]{Lit}) states that the multiplicity of
an irreducible representation indexed by $\lambda$ is equal to $\left< s_r[kX], s_\lambda[\Omega[X]] \right>$~.

When evaluating symmetric functions at expressions in variables $q$,
we consider $kq = \underbrace{q+q+\cdots+q}_{k\hbox{ times}}$
then by Lemma \ref{lem:sfids} part (c), $\Omega[k q] = \Omega[q]^k = \frac{1}{(1-q)^k}$.
Since the coefficient of $q^r$ in
$\Omega[k q X] = \sum_{d \geq 0} q^d s_d[kX]$ is $s_r[kX]$,
the multiplicity that we have just calculated is equal
to the coefficient of $q^r$ in the series:
\begin{align}
\left< \Omega[k qX], s_\lambda[\Omega[X]] \right> = s_\lambda[\Omega[k q]]
= s_\lambda\!\left[\frac{1}{(1-q)^k}\right]
\end{align}
where the equalities follow by applying Lemma \ref{lem:sfids} part (b).

Fix a positive integer $k$ and a partition $\lambda$ of $n$.
To determine if the multiplicity of $W^\lambda_{\CC S_n}$ in
$\bP^r( V_{n,k})$ is non-zero
for a given $r$
(or equivalently, to determine if there is a module $W^\lambda_{\AA_{r,k}(n)}$)
we are asking if the coefficient of $q^r$ is non-zero in $s_\lambda\!\left[1/(1-q)^k\right]$.

Let $a^\lambda_{r,k}$ be equal to the number of multiset tableaux
of shape $\lambda$ with $r$ entries from $1$ to $k$.  We know by Theorem 3.1 in \cite{OZ4}
that this is the multiplicity of $W^\lambda_{\CC S_n}$.
There is an injection of the tableaux enumerated by $a^\lambda_{r,k}$ into the tableaux enumerated by $a^\lambda_{r+1,k}$ by
adding an extra entry $k$ in the highest corner of the tableau.
Hence we can say that if $a^\lambda_{r,k}$ is non-zero, then $a^\lambda_{r+1,k}$ is non-zero.

By Equation \eqref{eq:gfdim}, it remains to identify the smallest value of $r$ such that the coefficient
of $q^r$ in $s_\lambda\!\left[1/(1-q)^k\right]$ is non-zero.
First note that
\begin{align*}
1/(1-q)^k 
&=1 + \underbrace{q+q+\cdots+q}_{\binom{k+1-1}{1}~times}
+ \underbrace{q^2+q^2+\cdots+q^2}_{\binom{k+2-1}{2}~times}
+ \underbrace{q^3+q^3+\cdots+q^3}_{\binom{k+3-1}{3}~times}+ \cdots~.
\end{align*}
The expression $s_\lambda\!\left[1/(1-q)^k\right]$ is equal to
the monomial expansion of $s_\lambda[X]$ with $x_1=1$, the variables $x_2$
thought $x_{\binom{k+1-1}{1}+1}$ equal to $q$, the variables
$x_{2+\binom{k+1-1}{1}}$ through $x_{1+\binom{k+1-1}{1}+\binom{k+2-1}{2}}$
equal to $q^2$, and in general the variables $x_{t_{j-1}+1}$ through
$x_{t_j}$ equal to $q^{j-1}$.
The term of smallest degree in this expression will be the leading
term of $s_\lambda[X]$ which is $x_1^{\lambda_1} x_2^{\lambda_2} \cdots x_{\ell(\lambda)}^{\lambda_{\ell(\lambda)}}$
and that degree will be equal to
$$\sum_{j\geq1} (j-1) \cdot \sum_{i=t_{j-1}+1}^{t_{j}} \lambda_i~.$$
If $r$ is smaller than this value, then the coefficient of $q^r$ in
$s_\lambda\!\left[1/(1-q)^k\right]$ is equal to $0$, otherwise the coefficient is non-zero.
\end{proof}

\begin{remark}
The series on the right hand side of Equation \eqref{eq:gfdim}
is perhaps more explicitly written as
$$s_\lambda\!\left[ \frac{1}{(1-q)^k} \right]=
\sum_{\gamma \in \PP_n} \frac{\chi_{S_n}^\lambda(\gamma)}{z_\gamma}\prod_{i=1}^{\ell(\gamma)}\frac{1}{(1-q^{\gamma_i})^k}$$
where $\chi_{S_n}^\lambda(\gamma)$ are the values of the irreducible symmetric group characters indexed by
$\lambda$ at an element of cycle type $\gamma$.
The special case when $k=1$ is due to Aiken \cite{Aik} and in this case
the result is the multiplicity of a symmetric group irreducible in the polynomial
ring in a single set of $n$ variables (see \cite{Sta} Example 7.73 for details).

By Theorem 3.1 of \cite{OZ4},
if $n$ be a positive integer and
$\lambda \in \PP_n$, then the dimension of the irreducible
$\AA_{r,k}(n)$-module indexed by $\lambda$ (assuming that it exists)
is equal to the number of multiset valued tableaux of shape $\lambda$
with $r$ values from $1$ through $k$ (where a multiset tableaux is a
column strict tableaux whose entries are multisets ordered by a total order).
\end{remark}

\begin{cor} \label{cor:dimformula} For positive integers $n,k,r$,
the dimension of $\AA_{r,k}(n)$ is equal to the
coefficient of $z^n q^r t^r$ in
\begin{equation}\label{eq:gfdims}
\prod_{i \geq 0} \prod_{j \geq 0} \frac{1}{\left(1- z q^i t^j\right)^{\binom{k+i-1}{i} \binom{k+j-1}{j}}}~.
\end{equation}
\end{cor}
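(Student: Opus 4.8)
The plan is to combine the semisimplicity of $\AA_{r,k}(n)$ with the dimension formula for its irreducibles from Proposition \ref{prop:dimirred} and the Cauchy identity of Lemma \ref{lem:sfids}(a). First I would record that, since $\AA_{r,k}(n) = \End_{\CC S_n}(\bP^r(V_{n,k}))$ is the centralizer of the semisimple algebra $\CC S_n$ acting on $\bP^r(V_{n,k})$, it is itself semisimple; by the double centralizer theorem (Theorem \ref{th:reptheory}) its irreducibles are exactly the nonzero modules $W^\lambda_{\AA_{r,k}(n)}$ with $\lambda \in \PP_n$. Hence
$$\dim \AA_{r,k}(n) = \sum_{\lambda \in \PP_n} \left(\dim W^\lambda_{\AA_{r,k}(n)}\right)^2,$$
where the zero summands (those $\lambda$ for which $W^\lambda_{\CC S_n}$ does not occur in $\bP^r(V_{n,k})$) may be included harmlessly.

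Next I would substitute the generating-function formula $\dim W^\lambda_{\AA_{r,k}(n)} = [q^r]\, s_\lambda\!\left[1/(1-q)^k\right]$ from Proposition \ref{prop:dimirred} into each factor of the square, using an independent variable $t$ for the second factor, so that $\left(\dim W^\lambda_{\AA_{r,k}(n)}\right)^2$ equals the product of $[q^r]\, s_\lambda[1/(1-q)^k]$ and $[t^r]\, s_\lambda[1/(1-t)^k]$. To package the sum over $\lambda$ and to record the constraint $|\lambda| = n$, I would introduce a bookkeeping variable $z$ and use homogeneity, $s_\lambda[zW] = z^{|\lambda|} s_\lambda[W]$. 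Then the coefficient of $z^n q^r t^r$ in $\sum_{\lambda \in \PP} s_\lambda[z/(1-q)^k]\, s_\lambda[1/(1-t)^k]$ is exactly $\dim \AA_{r,k}(n)$.

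Finally I would evaluate that same sum in closed form using Lemma \ref{lem:sfids}(a), which gives $\sum_\lambda s_\lambda[A]\,s_\lambda[B] = \Omega[AB] = \prod_{i,j} 1/(1 - a_i b_j)$. The key step is to read off the monomial content of the two plethystic alphabets: $\frac{z}{(1-q)^k} = \sum_{i \geq 0} \binom{k+i-1}{i} z q^i$ is the alphabet consisting of $z q^i$ with multiplicity $\binom{k+i-1}{i}$, while $\frac{1}{(1-t)^k} = \sum_{j \geq 0}\binom{k+j-1}{j} t^j$ is the alphabet of $t^j$ with multiplicity $\binom{k+j-1}{j}$. Their plethystic product is therefore the alphabet of monomials $z q^i t^j$ with multiplicity $\binom{k+i-1}{i}\binom{k+j-1}{j}$, whence
$$\Omega\!\left[\frac{z}{(1-q)^k}\cdot\frac{1}{(1-t)^k}\right] = \prod_{i \geq 0}\prod_{j \geq 0}\frac{1}{\left(1 - z q^i t^j\right)^{\binom{k+i-1}{i}\binom{k+j-1}{j}}}.$$
Extracting $[z^n q^r t^r]$ from both expressions for the sum then yields the claimed formula. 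I expect the main obstacle to be the careful justification of this last step — identifying the plethystic alphabets with their multiplicities and checking that the exponents combine multiplicatively — rather than the representation-theoretic input, which is immediate from the earlier results.
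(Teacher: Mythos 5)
Your proposal is correct and follows essentially the same route as the paper: the Artin--Wedderburn/double-centralizer identity $\dim \AA_{r,k}(n) = \sum_{\lambda \in \PP_n}\bigl(\dim W^\lambda_{\AA_{r,k}(n)}\bigr)^2$, the substitution of Proposition \ref{prop:dimirred} with independent variables $q$ and $t$, and the Cauchy identity of Lemma \ref{lem:sfids}(a) applied to the plethystic alphabets $z/(1-q)^k$ and $1/(1-t)^k$ to produce the product formula. The only cosmetic difference is the direction of the computation (you go from the sum of squares to the product, the paper expands the product into the sum), and your explicit appeal to homogeneity $s_\lambda[zW]=z^{|\lambda|}s_\lambda[W]$ makes the bookkeeping in $z$ slightly more transparent than the paper's.
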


\begin{remark} \label{rem:dim}
To use this generating function for effective computation of the dimension,
one should take the finite product of $0 \leq i,j \leq r$ and
expand the series in $z$.
The coefficients of $z^n$
will be a polynomial in $q,t$ and the coefficient of $q^r t^r$
will be the dimension of $\AA_{r,k}(n)$.
However, like the partition algebra, the dimensions of the algebras grow quickly in the
parameters $k$ and $r$ and stablize for $n \geq 2r$
(e.g. as a reference point and example, $\dim \AA_{3,4}(3) = 22736$,
$\dim \AA_{3,4}(4) = 33712$, $\dim \AA_{3,4}(5) = 36912$, and
$\dim \MP_{3,4}(x) = \dim \AA_{3,4}(n) = 37312$ for $n\geq 6$).
The other coefficients of $q^a t^b$ will be the dimension of
the homomorphisms from the degree $a$ polynomials
to the degree $b$ polynomials in the variables
$x_{ij}$ with $1\leq i \leq n$ and $1 \leq j \leq k$
which commute with the action of the symmetric group.
\end{remark}

\begin{proof} (of Corollary \ref{cor:dimformula})
It is a consequence of Theorem \ref{th:reptheory},
but also by the insertion described in
\cite{COSSZ} there is a bijection between $\Pi_{r,k,n}$ and pairs of
multiset tableaux that directly show that
\begin{equation}\label{eq:dim}
\sum_{\lambda \in \PP_n} \left( \dim W^\lambda_{\AA_{r,k}(n)}\right)^2 = \dim \AA_{r,k}(n)~.
\end{equation}
We will build on this equation to prove that the coefficient of
$z^n q^r t^r$ in Equation \eqref{eq:gfdims} is equal to $\dim \AA_{r,k}(n)$.

We have by Lemma \ref{lem:sfids}, that
\begin{align*}
\prod_{i \geq 0} \prod_{j \geq 0} \frac{1}{\left(1- z q^i t^j\right)^{\binom{k+i-1}{i} \binom{k+j-1}{j}}}
&= \Omega\left[ \sum_{i,j \geq 0} \binom{k+i-1}{i} \binom{k+j-1}{j} z q^i t^j \right]\\
&= \Omega\left[ \frac{z}{(1-q)^k (1-t)^k} \right]\\
&= \sum_{n \geq 0} \sum_{\lambda \in \PP_n} z^n s_\lambda\!\left[\frac{1}{(1-q)^k}\right]
s_\lambda\!\left[\frac{1}{(1-t)^k}\right]~.
\end{align*}
Hence, by Proposition \ref{prop:dimirred} the coefficient of $z^n q^r t^r$ in the right hand side is equal
to $\sum_{\lambda \in \PP_n} \left( \dim W^\lambda_{\AA_{r,k}(n)}\right)^2 = \dim \AA_{r,k}(n)$.
\end{proof}

\subsection{The branching rule}
\label{branching}
The coefficients in the product of Schur functions
$$s_{\tau^{(1)}} s_{\tau^{(2)}} \cdots s_{\tau^{(r)}}
= \sum_{\nu} c^{\nu}_{\tau^{(1)} \tau^{(2)} \cdots \tau^{(r)}} s_\nu$$
are known as the Littlewood-Richardson coefficients.
We will use these coefficients
to express what is known as a branching rule
for the multiplicity of an irreducible for $\AA_{r-d,k-1}(n)$-module
in an irreducible of $\AA_{r,k}(n)$-module.

Note that the polynomial ring decomposes as
\begin{equation}\label{eq:polydecomp}
\bP^r( V_{n,\{1,2,\ldots,k\}})
\cong
\bigoplus_{d=0}^r
\bP^{r-d}( V_{n,\{1,2,\ldots,k-1\}})
\otimes \bP^d( V_{n,\{k\}})
\end{equation}
where the action of $\CC S_n$ is diagonal
on the tensors and we separate the variables with the second index $k$
and the parameter $d$ represents the total degree of the variables
$x_{ik}$ with $1 \leq i \leq n$.
This decomposition of the polynomial ring induces an inclusion
$\phi: \AA_{r-d,k-1}(n) \hookrightarrow \AA_{r,k}(n)$ where
for $\pi \in \Pi_{r-d,k-1,n}$, $O_\pi \in \AA_{r-d,k-1}(n)$ and
$$\phi(O_\pi) = \sum_{\nu} O_\nu$$
where the sum is over $\nu \in \Pi_{r,k,n}$
with the restriction of the entries of $\nu$ to
$[k-1] \cup [\o{k-1}]$ is $\pi$
and when $\nu$ is restricted to $\{k, \o{k}\}$
the result is a self symmetric set partition with $d$ values $k$ and $d$ values
$\o{k}$.

\begin{theorem} \label{th:branchingrule} For positive integers $r,k,n$ and
for all partitions $\lambda, \mu \in \PP_n$ and $d \geq 0$,
the multiplicity of
$W^\mu_{\AA_{r-d,k-1}(n)}$ in $W^\lambda_{\AA_{r,k}(n)}$ is equal to
$$\dim~\Hom( W^\lambda_{\CC S_n}, W^\mu_{\CC S_n}
\otimes \bP^d( V_{n,\{k\}}))
=\sum_\alpha \sum_{\tau^{(\ast)}} c^{\mu}_{\tau^{(1)} \tau^{(2)} \cdots \tau^{(d+1)}}
c^{\lambda}_{\tau^{(1)} \tau^{(2)} \cdots \tau^{(d+1)}}$$
where the sum is over sequences of non-negative integers
$\alpha = (\alpha_1, \alpha_2, \ldots, \alpha_{d+1})$
such that $\sum_{i=1}^{d+1} (i-1) \alpha_i = d$ and $\sum_{i=1}^{d+1} \alpha_i = n$
and $\tau^{(\ast)} = (\tau^{(1)}, \tau^{(2)}, \ldots, \tau^{(d+1)})$
where for each $1 \leq i \leq d+1$, $\tau^{(i)}$ is a partition of $\alpha_i$.
\end{theorem}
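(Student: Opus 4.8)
The plan is to prove the two asserted equalities in turn: first identify the branching multiplicity with the Hom-space dimension using the embedding $\phi$ and Theorem \ref{th:reptheory}, then reduce that Hom-space to Littlewood--Richardson coefficients by analysing $\bP^d(V_{n,\{k\}})$ as a permutation module.

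For the first equality, recall from \eqref{eq:irreddef} that $W^\lambda_{\AA_{r,k}(n)} = \Hom_{\CC S_n}(W^\lambda_{\CC S_n}, \bP^r(V_{n,k}))$, with $\AA_{r,k}(n)$ acting by post-composition. I would first observe that, for the fixed value $d$, the image $\phi(O_\pi)$ lands in the part of $\AA_{r,k}(n)$ supported on the summand $M_d := \bP^{r-d}(V_{n,\{1,\ldots,k-1\}}) \otimes \bP^d(V_{n,\{k\}})$ of \eqref{eq:polydecomp}: each $\nu$ occurring in $\phi(O_\pi)$ restricts to a self-symmetric multiset partition with exactly $d$ copies of $k$ and of $\o k$, so $O_\nu$ preserves degree $d$ in the $k$-th set of variables and annihilates every other graded summand. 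Thus $\phi(I_{r-d,k-1})$ is the idempotent $e_d$ projecting onto $M_d$, and $\phi$ realizes $\AA_{r-d,k-1}(n)$ as acting on $M_d$ by $O_\pi\otimes\mathrm{id}$, the self-symmetric blocks in the $k$-th variable acting as the identity on $\bP^d(V_{n,\{k\}})$ exactly as in the identity-element computation for $\MP_{r,k}(x)$. Applying the double-centralizer decomposition $\bP^{r-d}(V_{n,\{1,\ldots,k-1\}}) \cong \bigoplus_\mu W^\mu_{\CC S_n}\otimes W^\mu_{\AA_{r-d,k-1}(n)}$ to the first tensor factor gives, as a $\CC S_n \times \AA_{r-d,k-1}(n)$-module,
$$M_d \cong \bigoplus_\mu \bigl(W^\mu_{\CC S_n}\otimes \bP^d(V_{n,\{k\}})\bigr)\otimes W^\mu_{\AA_{r-d,k-1}(n)}.$$
Since $e_d W^\lambda_{\AA_{r,k}(n)} = \Hom_{\CC S_n}(W^\lambda_{\CC S_n}, M_d)$ is precisely the part of $W^\lambda_{\AA_{r,k}(n)}$ on which $\phi(\AA_{r-d,k-1}(n))$ acts, taking $\Hom_{\CC S_n}(W^\lambda_{\CC S_n}, -)$ yields a decomposition whose $W^\mu_{\AA_{r-d,k-1}(n)}$-multiplicity is $\dim\Hom_{\CC S_n}(W^\lambda_{\CC S_n}, W^\mu_{\CC S_n}\otimes \bP^d(V_{n,\{k\}}))$, which is the first equality.

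For the second equality, note that $V_{n,\{k\}}\cong\CC^n$, so $\bP^d(V_{n,\{k\}})$ is the permutation module of $S_n$ on degree-$d$ monomials in $n$ variables. Classifying monomials by exponent type, recording $\alpha_i$ as the number of variables occurring with exponent $i-1$, the $S_n$-orbits are indexed exactly by the sequences $\alpha=(\alpha_1,\ldots,\alpha_{d+1})$ with $\sum_i \alpha_i = n$ and $\sum_i (i-1)\alpha_i = d$, and the stabilizer of a monomial of type $\alpha$ is the Young subgroup $S_\alpha := S_{\alpha_1}\times\cdots\times S_{\alpha_{d+1}}$. Hence $\bP^d(V_{n,\{k\}})\cong \bigoplus_\alpha \mathrm{Ind}_{S_\alpha}^{S_n}\mathbf 1$. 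Using the tensor identity $W^\mu_{\CC S_n}\otimes \mathrm{Ind}_{S_\alpha}^{S_n}\mathbf 1 \cong \mathrm{Ind}_{S_\alpha}^{S_n}\mathrm{Res}_{S_\alpha}W^\mu_{\CC S_n}$ and then Frobenius reciprocity gives
$$\dim\Hom_{\CC S_n}\bigl(W^\lambda_{\CC S_n}, W^\mu_{\CC S_n}\otimes \mathrm{Ind}_{S_\alpha}^{S_n}\mathbf 1\bigr) = \dim\Hom_{S_\alpha}\bigl(\mathrm{Res}_{S_\alpha}W^\lambda_{\CC S_n}, \mathrm{Res}_{S_\alpha}W^\mu_{\CC S_n}\bigr).$$
Expanding each restriction to $S_\alpha$ by the iterated Littlewood--Richardson rule, $\mathrm{Res}_{S_\alpha}W^\lambda_{\CC S_n} = \bigoplus_{\tau^{(\ast)}} c^\lambda_{\tau^{(1)}\cdots\tau^{(d+1)}}\, W^{\tau^{(1)}}\boxtimes\cdots\boxtimes W^{\tau^{(d+1)}}$ with $\tau^{(i)}\vdash\alpha_i$ (and likewise for $\mu$), and invoking orthonormality of the irreducibles of $S_\alpha$, this dimension equals $\sum_{\tau^{(\ast)}} c^\lambda_{\tau^{(1)}\cdots\tau^{(d+1)}} c^\mu_{\tau^{(1)}\cdots\tau^{(d+1)}}$. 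Summing over the admissible $\alpha$ produces the stated formula.

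The step I expect to be the main obstacle is the module-theoretic bookkeeping in the first paragraph: verifying cleanly that $\phi$ identifies $\AA_{r-d,k-1}(n)$ with the subalgebra of $\End(M_d)$ acting as $\AA_{r-d,k-1}(n)\otimes \mathrm{id}_{\bP^d(V_{n,\{k\}})}$ — that is, that the self-symmetric blocks in the $k$-th variable genuinely act as the identity and that $\phi$ annihilates the other graded summands — so that the restriction of $W^\lambda_{\AA_{r,k}(n)}$ to $\AA_{r-d,k-1}(n)$ is governed solely by the double-centralizer decomposition of $M_d$. Once this is pinned down, the remaining steps are standard symmetric-group representation theory.
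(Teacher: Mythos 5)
Your proof is correct, and while the first half follows the paper's route, the second half takes a genuinely different path. For the first equality you and the paper both use \eqref{eq:irreddef} together with the decomposition \eqref{eq:polydecomp} and the double-centralizer statement $\bP^{r-d}(V_{n,k-1}) \cong \bigoplus_\mu W^\mu_{\CC S_n}\otimes W^\mu_{\AA_{r-d,k-1}(n)}$; the paper states this in two lines, whereas you additionally verify that $\phi$ really does act on the summand $M_d$ as $O_\pi\otimes\mathrm{id}$ and kills the other graded pieces --- a useful elaboration of what the paper leaves implicit. For the second equality the paper works entirely with symmetric functions: it invokes Proposition \ref{prop:dimirred} to write the multiplicity as the coefficient of $q^d$ in $\sum_{\nu} s_\nu\!\left[\frac{1}{1-q}\right]\left<s_\lambda, s_\mu\ast s_\nu\right>$, changes basis to $\sum_\gamma m_\gamma\!\left[\frac{1}{1-q}\right]\left<s_\lambda, s_\mu\ast h_\gamma\right>$, extracts the coefficient of $q^d$ from $m_\gamma\!\left[\frac{1}{1-q}\right]$ to produce the compositions $\alpha$ with $\sum_i(i-1)\alpha_i=d$, and then cites \cite[Example 23(d), p.~130]{Mac} for the identity $\left<s_\lambda, s_\mu\ast h_\alpha\right>=\sum_{\tau^{(\ast)}}c^\mu_{\tau^{(1)}\cdots\tau^{(d+1)}}c^\lambda_{\tau^{(1)}\cdots\tau^{(d+1)}}$. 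You instead decompose $\bP^d(V_{n,\{k\}})$ directly as a permutation module $\bigoplus_\alpha \mathrm{Ind}_{S_\alpha}^{S_n}\mathbf{1}$ by classifying monomial orbits by exponent type, and then derive the Littlewood--Richardson expression from the projection formula, Frobenius reciprocity, the iterated branching rule, and Schur orthogonality; in effect you prove the Macdonald exercise rather than citing it, and your orbit count replaces the plethystic evaluation of $m_\gamma$. The two arguments encode the same combinatorics (the exponent types are exactly the rearrangements $\beta$ of $\gamma$ in the paper's computation), but yours is self-contained and purely representation-theoretic, while the paper's is shorter and slots into the generating-function machinery of Proposition \ref{prop:dimirred} already developed in Section \ref{sec:irreps}.
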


When $d=1$, the conditions on the sum impose that $\alpha = (n-1,1)$ and
$\sum\limits_{\tau^{(1)} \vdash n-1} c^\mu_{\tau^{(1)}(1)} c^\lambda_{\tau^{(1)}(1)}$ is
equal to the number of ways of removing a
corner cell from the partition $\mu$
and then adding a corner cell to the result to obtain the partition $\lambda$.
This is precisely the branching
rule for the partition algebra \cite{Hal,Ma4}.

\begin{remark}
Note that the analogous branching rule for $GL_k$-modules
would state how $W^\lambda_{GL_k}$ for $\lambda \in \PP_r$
decomposes as a direct sum of modules $W^\mu_{GL_{k-1}}$ where $\mu$ is
a partition of size smaller than or equal to
$r$.  In this case (see for instance \cite[Section 12.2.3]{GoodWall2}), the branching rule
is known as the Pieri rule and the multiplicity of $W^\mu_{GL_{k-1}}$ in
$W^\lambda_{GL_k}$ is $1$ if $\lambda$ and $\mu$ are `interlaced' or `differ by
a horizontal strip' and is equal to $0$ otherwise.  Theorem \ref{th:branchingrule}
is a formula for the multiplicity of
$W^\mu_{\AA_{r-d,k-1}(n)}$ in $W^\lambda_{\AA_{r,k}(n)}$
which could be greater than $1$.
\end{remark}

\begin{remark}
In \cite[Proposition 21]{OZ2} the authors gave a combinatorial interpretation
for the multiplicity appearing in Theorem \ref{th:branchingrule}
in terms of tableaux that simultaneously
satisfy two types of lattice conditions.
The application of that expression is
Theorem 13 of \cite{OZ2} which is a multiset tableaux interpretation
of
$$\dim~\Hom( W^\gamma_{\CC S_n}, W^\lambda_{\CC S_n}
\otimes \bP^{\alpha_1}( V_{n,1})\otimes \bP^{\alpha_2}( V_{n,1})\otimes \cdots \otimes \bP^{\alpha_{\ell(\alpha)}}( V_{n,1}))~.$$
\end{remark}

\begin{proof} (of Theorem \ref{th:branchingrule})
The subalgebra $\AA_{r-d,k-1}(n)$ acts exclusively on the
part of the polynomial ring where the $j$
index of the $x_{ij}$ variables has $j \neq k$, so
we have $\bP^{r-d}(V_{n,k-1})
= \bigoplus_\mu W^\mu_{\AA_{r-d,k-1}(n)} \otimes W^\mu_{\CC S_n}$.
Then by Equation \eqref{eq:irreddef} and \eqref{eq:polydecomp},
\begin{align*}
W^\lambda_{\AA_{r,k}(n)}
&\cong \bigoplus_{d=0}^r
\Hom( W^\lambda_{\CC S_n}, \bP^{r-d}( V_{n,\{1,2,\ldots,k-1\}})
\otimes \bP^d( V_{n,\{k\}}) )\\
&\cong
\bigoplus_{d=0}^r \bigoplus_{\mu \in \PP_n}
W^\mu_{\AA_{r-d,k-1}(n)} \otimes  \Hom( W^\lambda_{\CC S_n}, W^\mu_{\CC S_n}
\otimes \bP^d( V_{n,\{k\}}) )~.
\end{align*}

Therefore restricting the action of $\AA_{r,k}(n)$ to the action of
$\bigoplus_{d=0}^r \AA_{r-d,k-1}(n)$ acting
on $W^\lambda_{\AA_{r,k}(n)}$, the multiplicity of
$W^\mu_{\AA_{r-d,k-1}(n)}$ will be equal to the multiplicity
of $W^\lambda_{\CC S_n}$ in
$W^\mu_{\CC S_n}
\otimes \bP^d( V_{n,\{k\}})$.

By Proposition \ref{prop:dimirred}, the multiplicity of
$W^\nu_{\CC S_n}$ in  $\bP^d( V_{n,\{k\}})$
is the coefficient of $q^d$ in $s_\nu\!\left[\frac{1}{1-q}\right]$,
so then
$\dim~\Hom( W^\lambda_{\CC S_n}, W^\mu_{\CC S_n}
\otimes \bP^d( V_{n,\{k\}}) )$
is equal to the coefficient of $q^d$ in
$\sum_{\nu \in \PP_n} s_\nu\!\left[\frac{1}{1-q}\right] \left< s_\lambda, s_\mu \ast s_\nu \right>$,
where $\ast$ indicates the Kronecker product. Then by a change of basis calculation,
\begin{align*}
\sum_{\nu \in \PP_n} s_\nu\!\left[\frac{1}{1-q}\right] \left< s_\lambda, s_\mu \ast s_\nu \right>
&=
\sum_{\gamma\in \PP_n}
m_\gamma\!\left[\frac{1}{1-q}\right] \left< s_\lambda, s_\mu \ast h_\gamma \right>~.
\end{align*}

Since $m_\gamma\!\left[\frac{1}{1-q}\right]$ is equal to $m_\gamma[X]$ with
$x_i$ replaced with $q^{i-1}$, then
$$m_\gamma\!\left[\frac{1}{1-q}\right] = \sum_{\beta} q^{\sum_i (i-1) \beta_i}$$
where the sum is over sequences $\beta$ of non-negative integers that are permutations
of the partition $\gamma$ (with trailing zeros).
Therefore the coefficient of $q^d$ in this expression follows by applying \cite[Example 23(d), p. 130]{Mac}
to show that
$$\sum_\alpha \left< s_\lambda, s_\mu \ast h_\alpha \right>
=\sum_\alpha \sum_{\tau^{(\ast)}} c^{\mu}_{\tau^{(1)} \tau^{(2)} \cdots \tau^{(d+1)}}
c^{\lambda}_{\tau^{(1)} \tau^{(2)} \cdots \tau^{(d+1)}}$$
where the sum  is over all sequences of non-negative integers
$\alpha = (\alpha_1, \alpha_2, \ldots, \alpha_{d+1})$ that sum to $n$ such that
$\sum_{i=1}^{d+1} (i-1) \alpha_i = d$
and where the inner sum on the right hand side
is over sequence of partitions $\tau^{(\ast)} = (\tau^{(1)}, \tau^{(2)}, \ldots, \tau^{(d+1)})$
where $\tau^{(i)} \in \PP_{\alpha_i}$ for $1\leq i \leq d+1$.
\end{proof}

\subsection{Restriction of $W^\lambda_{\AA_{r,k}(n)}$}
\label{sec:restrict}
In this section we view $\AA_{d,k}(n) \otimes \AA_{r-d,\ell}(n)$ as
a subalgebra of $\AA_{r,k+\ell}(n)$. In order to define the embedding  we
need some notation suppose that $\gamma$ is a set partition of a multiset
containing only elements in
$\{k+1,k+2, \ldots, k+\ell\} \cup \{\o{k+1},\o{k+2}, \ldots, \o{k+\ell}\}$. Then
the $k$-\defn{standarization} of $\gamma$ is the multiset partition in $\Pi_{r,\ell, n}$
obtained by subtracting $k$ (resp. $\o k$) from all elements in $\gamma$.

The embedding of $\AA_{d,k}(n) \otimes \AA_{r-d,\ell}(n)$ in $\AA_{r,k+\ell}(n)$ is defined
 by mapping the element $O_\pi \otimes O_\gamma \in \AA_{d,k}(n) \otimes \AA_{r-d,\ell}(n)$,
where $\pi \in \Pi_{d,k,n}$ and $\gamma \in \Pi_{r-d, \ell, n}$, to the sum of elements $\sum_{\tau\in \Pi_{r, k+\ell, n}} O_\tau$
where the sum ranges over all $\tau$ such that the restriction of $\tau$ to $[k]\cup [\o k]$
is $\pi$ and the $k$-standardization
of $\tau$ restricted to the set $\{k+1,k+2, \ldots, k+\ell\} \cup \{\o{k+1},\o{k+2}, \ldots, \o{k+\ell}\}$ is $\gamma$.

\begin{example}
Consider the multiset partitions
$\pi = \dcl\dcl1\dcr,\dcl\o1\dcr\dcr$ and $\tau = \dcl\dcl1,\o1\dcr\dcr$,
then $O_\pi \otimes O_\tau \in \AA_{1,1}(n) \otimes \AA_{1,1}(n)$
as an element of $\AA_{2,1+1}(n)$ is equal to
$O_{\dcl\dcl1\dcr,\dcl\o1\dcr,\dcl2,\o2\dcr\dcr} +
O_{\dcl\dcl1,2,\o2\dcr,\dcl\o1\dcr\dcr} +
O_{\dcl\dcl1\dcr,\dcl2,\o1,\o2\dcr\dcr}$.
\end{example}

The next theorem shows that the multiplicity
of a tensor of irreducible multiset partition algebra
modules in the restriction from $\AA_{r,k+\ell}(n)$ to
$\AA_{d,k}(n) \otimes \AA_{r-d,\ell}(n)$ is given by
the Kronecker coefficients.  Recall that these coefficients  are the  multiplicities which
appear in the tensor of irreducible symmetric group algebra
modules.  That is, if we denote $g_{\la\nu\gamma} = \dim~\Hom( W^\lambda_{\CC S_n},
W^{\nu}_{\CC S_n} \otimes W^{\gamma}_{\CC S_n})$,
then we have that the decomposition of the restriction is
given by the following theorem.

\begin{theorem}\label{th:kronrestrict}
Let $r,k,\ell$ and $n$ be positive integers and $\lambda$ be a partition of $n$,
then the restriction of the irreducible $W^\lambda_{\AA_{r,k+\ell}(n)}$ to $\bigoplus_{d=0}^r \AA_{d,k}(n) \otimes \AA_{r-d,\ell}(n)
\subseteq \AA_{r,k+\ell}(n)$ has the decomposition
$$ {\rm Res}^{\AA_{r,k+\ell}(n)}_{\bigoplus_{d=0}^r \AA_{d,k}(n) \otimes \AA_{r-d,\ell}(n)}
W^\lambda_{\AA_{r,k+\ell}(n)} \cong
\bigoplus_{d=0}^r \bigoplus_{\nu \in \PP_n}
\bigoplus_{\gamma \in \PP_n}
(W^\nu_{\AA_{d,k}(n)} \otimes W^{\gamma}_{\AA_{r-d,\ell}(n)})^{\oplus g_{\lambda\nu\gamma}}~.
$$
\end{theorem}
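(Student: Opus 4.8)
The plan is to pull the identity $W^\lambda_{\AA_{r,k+\ell}(n)} = \Hom(W^\lambda_{\CC S_n}, \bP^r(V_{n,k+\ell}))$ from Equation \eqref{eq:irreddef} through a splitting of the polynomial ring into the first $k$ and the last $\ell$ sets of variables, and then to recognize the resulting restriction multiplicities as dimensions of $S_n$-homomorphism spaces into tensor products of irreducibles, which are Kronecker coefficients by the definition in Section \ref{subsec:sfnotation}.

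First I would record the analogue of Equation \eqref{eq:polydecomp} in which the variables are grouped into the first $k$ and the last $\ell$ columns,
$$\bP^r(V_{n,\{1,\ldots,k+\ell\}}) \cong \bigoplus_{d=0}^r \bP^d(V_{n,\{1,\ldots,k\}}) \otimes \bP^{r-d}(V_{n,\{k+1,\ldots,k+\ell\}}),$$
where the $\CC S_n$-action is diagonal and $d$ records the total degree in the variables $x_{ij}$ with $j \leq k$; applying $k$-standardization identifies $\bP^{r-d}(V_{n,\{k+1,\ldots,k+\ell\}})$ with $\bP^{r-d}(V_{n,\ell})$ as an $S_n$-module. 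The step I expect to be the main obstacle is verifying that the embedding of $\AA_{d,k}(n) \otimes \AA_{r-d,\ell}(n)$ into $\AA_{r,k+\ell}(n)$ defined just before the statement coincides with the action in which the two tensor factors act independently on $\bP^d(V_{n,k})$ and $\bP^{r-d}(V_{n,\ell})$, and by zero on summands of different bidegree. Concretely, one checks on the orbit basis that $O_\pi \otimes O_\gamma$ is sent to exactly $\sum_\tau O_\tau$ over those $\tau \in \Pi_{r,k+\ell,n}$ that restrict to $\pi$ on $[k]\cup[\o k]$ and $k$-standardize to $\gamma$ on the remaining indices, which is the stated embedding; the bidegree grading then guarantees that each summand $\AA_{d,k}(n)\otimes\AA_{r-d,\ell}(n)$ acts block-diagonally with respect to the decomposition above.

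Granting this, additivity of $\Hom(W^\lambda_{\CC S_n}, -)$ over the direct sum gives
$$W^\lambda_{\AA_{r,k+\ell}(n)} \cong \bigoplus_{d=0}^r \Hom\!\left(W^\lambda_{\CC S_n},\, \bP^d(V_{n,k}) \otimes \bP^{r-d}(V_{n,\ell})\right),$$
and the $d$-th summand is precisely the piece on which $\AA_{d,k}(n)\otimes\AA_{r-d,\ell}(n)$ acts. I would then apply the double-centralizer decomposition of Equation \eqref{eq:reynoldstensor} to each factor separately,
$$\bP^d(V_{n,k}) \cong \bigoplus_{\nu \in \PP_n} W^\nu_{\AA_{d,k}(n)} \otimes W^\nu_{\CC S_n}, \qquad \bP^{r-d}(V_{n,\ell}) \cong \bigoplus_{\gamma \in \PP_n} W^\gamma_{\AA_{r-d,\ell}(n)} \otimes W^\gamma_{\CC S_n},$$
so that, since $\CC S_n$ acts diagonally on the polynomial ring, the tensor product becomes
$$\bigoplus_{\nu,\gamma \in \PP_n} \left(W^\nu_{\AA_{d,k}(n)} \otimes W^\gamma_{\AA_{r-d,\ell}(n)}\right) \otimes \left(W^\nu_{\CC S_n} \otimes W^\gamma_{\CC S_n}\right).$$

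Finally, because $\AA_{d,k}(n)\otimes\AA_{r-d,\ell}(n)$ commutes with $\CC S_n$, I would pull $\Hom(W^\lambda_{\CC S_n}, -)$ through the sum to rewrite the $d$-th summand as
$$\bigoplus_{\nu,\gamma \in \PP_n} \left(W^\nu_{\AA_{d,k}(n)} \otimes W^\gamma_{\AA_{r-d,\ell}(n)}\right) \otimes \Hom\!\left(W^\lambda_{\CC S_n},\, W^\nu_{\CC S_n} \otimes W^\gamma_{\CC S_n}\right).$$
By the definition of the Kronecker coefficients in Section \ref{subsec:sfnotation} together with their full symmetry, $\dim \Hom(W^\lambda_{\CC S_n}, W^\nu_{\CC S_n} \otimes W^\gamma_{\CC S_n}) = g_{\lambda\nu\gamma}$, so the multiplicity of $W^\nu_{\AA_{d,k}(n)} \otimes W^\gamma_{\AA_{r-d,\ell}(n)}$ in the $d$-th summand is $g_{\lambda\nu\gamma}$. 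Summing over $d$ yields the claimed decomposition. Once the embedding-compatibility of the second paragraph is established, the remainder is a formal manipulation of multiplicity spaces using Theorem \ref{th:reptheory}.
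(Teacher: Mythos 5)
Your proposal is correct and follows essentially the same route as the paper's proof: decompose $\bP^r(V_{n,\{1,\ldots,k+\ell\}})$ by bidegree into $\bigoplus_d \bP^d(V_{n,\{1,\ldots,k\}})\otimes\bP^{r-d}(V_{n,\{k+1,\ldots,k+\ell\}})$, apply the double-centralizer decomposition to each tensor factor, and identify the multiplicity space $\Hom(W^\lambda_{\CC S_n}, W^\nu_{\CC S_n}\otimes W^\gamma_{\CC S_n})$ with the Kronecker coefficient $g_{\lambda\nu\gamma}$. The only difference is that you explicitly flag and sketch the verification that the combinatorially defined embedding on the orbit basis agrees with the block-diagonal action on the bigraded decomposition, a compatibility the paper takes for granted after defining the embedding just before the theorem.
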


\begin{proof} 
Recall from Equation \eqref{eq:irreddef} and for any $d\geq0$ that $\AA_{d,k}(n)$
is the centralizer of $\CC S_n$ acting on $\bP^d( V_{n,\{1,2,\ldots,k\}})$, so
\begin{equation}
\bP^d( V_{n,\{1,2,\ldots,k\}}) \cong \bigoplus_{\nu \in \PP_n} W^\nu_{\CC S_n} \otimes W^\nu_{\AA_{d,k}(n)}~.
\end{equation}
Therefore as an
$\CC S_n \otimes \AA_{d,k}(n) \otimes \CC S_n \otimes \AA_{r-d,\ell}(n)$-module that
\begin{align*}
\bP^d( V_{n,\{1,2,\ldots,k\}})&
\otimes \bP^{r-d}( V_{n,\{k+1,k+2,\ldots, k+\ell\}})\\
&\cong
\bigoplus_{\nu \in \PP_n} \bigoplus_{\gamma\in \PP_n}
\left(W^\nu_{\CC S_n} \otimes W^\nu_{\AA_{d,k}(n)} \right)\otimes
\left(W^\gamma_{\CC S_n} \otimes W^\gamma_{\AA_{r-d,\ell}(n)}\right)~.
\end{align*}

Now since
\begin{align*}
\bP^r( V_{n,\{1,2,\ldots,k+\ell\}})
&\cong
\bigoplus_{d=0}^r \bP^d( V_{n,\{1,2,\ldots,k\}})
\otimes \bP^{r-d}( V_{n,\{k+1,k+2,\ldots, k+\ell\}})~,
\end{align*}
then as a $\bigoplus_{d=0}^r \AA_{d,k}(n) \otimes \AA_{r-d,\ell}(n)$-module,
\begin{align*}
 &{\rm Res}^{\AA_{r,k+\ell}(n)}_{\bigoplus_{d=0}^r \AA_{d,k}(n) \otimes \AA_{r-d,\ell}(n)}
W^\lambda_{\AA_{r,k+\ell}(n)}\\
&\cong \bigoplus_{d=0}^r \bigoplus_{\nu \in \PP_n} \bigoplus_{\gamma\in \PP_n}
\Hom(W^\lambda_{\CC S_n}, W^\nu_{\CC S_n} \otimes W^\gamma_{\CC S_n})
\otimes W^\nu_{\AA_{d,k}(n)} \otimes W^\gamma_{\AA_{r-d,\ell}(n)}~.
\end{align*}
The theorem follows since $g_{\lambda\nu\gamma}$ is the dimension of
$\Hom(W^\lambda_{\CC S_n}, W^\nu_{\CC S_n} \otimes W^\gamma_{\CC S_n})$.
\end{proof}

\end{document}